 \theoremstyle{definition}
\newtheorem{theorem}{Theorem}[section]
\newtheorem{prop}[theorem]{Proposition}
\newtheorem{lemma}[theorem]{Lemma}
\newtheorem{cor}[theorem]{Corollary}
\newtheorem{rem}[theorem]{Remark}
\newtheorem{definition}[theorem]{Definition}
\newtheorem{example}[theorem]{Example}
\newcounter{res}
\newtheorem{result}[res]{Theorem}   				
\numberwithin{equation}{section}
\newcommand{\Lip}{\mathrm{Lip}}
\DeclareMathOperator{\diam}{diam}
\DeclareMathOperator{\Con}{Con}
\DeclareMathOperator{\pCon}{pCon}
\DeclareMathOperator*{\NN}{\mathbb{N}}
\DeclareMathOperator*{\ZZ}{\mathbb{Z}}
\DeclareMathOperator*{\RR}{\mathbb{R}}
\renewcommand{\@fnsymbol}[1]{%
  \ifcase#1\relax 
  \or \dag 
  \or \ddag 
  \or \S 
  \or \P 
  \or \| 
  \else\@ctrerr\fi}
\title{On the topology of limit sets\\ of non-autonomous iterated function systems}
\author{Yuto Nakajima\\
Faculty of Science and Engineering, Doshisha University,\\ 1-3 Tatara Miyakodani, Kyotanabe-shi, Kyoto, 610-0394, Japan.\\
E-mail: yunakaji@mail.doshisha.ac.jp
\\
ORCID: 0000-0002-0357-4160
\and
Takayuki Watanabe\footnote{Author to whom any correspondence should be addressed.}
\\College of Science and Engineering, Chubu University,
\\
1200 Matsumoto-cho, Kasugai-shi, Aichi, 487-8501, Japan. \\
E-mail: takawatanabe@fsc.chubu.ac.jp\\
ORCID: 0009-0000-3591-7351
}
\begin{document}
\maketitle
\begin{abstract}
Since Mandelbrot's seminal work, 
there has been growing interest in the geometric nature of fractals. 
While the topological properties of the limit sets of
IFSs have been studied---notably in the pioneering work of Hata---many aspects remain poorly understood, especially in the non-autonomous setting. 
In this paper, we investigate the topology of limit sets arising from randomly generated non-autonomous IFSs. To this end, we develop a simplicial-homological framework that makes their topological structure accessible to rigorous analysis.
We apply our abstract theory to the concrete analysis of the so-called fractal squares,  
and provide an answer to a variant of Mandelbrot’s percolation problem. 
Moreover, for the non-autonomous fractal squares considered here, we prove that the 
Betti numbers of the finite-stage approximations grow exponentially  
at a rate equal to the natural symbolic entropy of the system. 
This reveals a quantitative link between topology across scales and dynamical complexity.
\end{abstract}

Keywords: 
non-autonomous IFS, 
fractal percolation, 
topology of fractals, 
fractal squares, 
\v{C}ech (co)homology. 

MSC2020: 28A80, 55N05, 37H12, 60K35.


\section{Introduction and the main theorems}\label{sec:Intro}
\subsection{Background}
Fractals are ubiquitous in nature. 
The shapes of coastlines, clouds, and forests are typical examples. 
Mandelbrot pointed out in his seminal work \cite{Mandel} 
that these complicated shapes exhibit self-similarity, sometimes only in a statistical sense. 
The fractals which we study in the present paper are limit sets of non-autonomous iterated function systems, defined below. 

\begin{definition}\label{def:NIFS}
Let $X$ be a compact metric space. 
For a map $f \colon X \to X$,  we denote its  Lipschitz constant by $\Lip (f)$. 

A non-autonomous iterated function system $(\Phi^{(j)})_{j=1}^{\infty}$ on $X$ is a sequence of collections   $\Phi^{(j)} = \{f_{i}^{(j)}  \colon X \to X\}_{i \in I^{(j)}}$ of maps, where each index set $I^{(j)}$ is finite, and 
there exists a uniform constant $c < 1$ such that $\Lip (f_{i}^{(j)}) \leq c$ for all $j \geq 1$ and $i \in I^{(j)}$. 

For a non-autonomous IFS $(\Phi^{(j)})_{j=1}^{\infty}$, 
we endow $I^{(j)}$ with the discrete topology and endow $\prod_{j=1}^{\infty} I^{(j)}$ with the product topology.
Define the continuous map $\Pi  \colon \prod_{j=1}^{\infty} I^{(j)}\to X$ by
$$\{\Pi(i_{1}, i_{2}, \dots) \} = \bigcap_{j=1}^{\infty} f^{(1)}_{i_{1}} \circ  f^{(2)}_{i_{2}} \circ \dots  \circ f^{(j)}_{i_{j}} (X),$$  
which is well-defined by the uniform contraction condition. 
We call $\Pi$ the coding map of $(\Phi^{(j)})_{j=1}^{\infty}$. 
Moreover, the image $J= \Pi(\prod_{j=1}^{\infty} I^{(j)})$ is called the limit set of $(\Phi^{(j)})_{j=1}^{\infty}$. 
\end{definition}

The concept of a non-autonomous IFS generalizes the classical (autonomous) IFS,  
where $\Phi^{(j)}$ remains the same for every $j \geq 1$.
Some of the non-autonomous IFSs considered in this paper are constructed by randomly selecting subsets $I^{(j)}$ from a fixed index set $I$ of an autonomous IFS $\{f_i\}_{i \in I}$. 
We refer to such non-autonomous systems as random IFSs.
As will be reviewed in Subsection~\ref{ssec:review}, 
much attention has been devoted to the dimension theory of these IFSs.

However, the topological properties of fractal sets have not been fully explored. 
With regard to the topology,
Hata \cite{Hata} established an equivalent condition for an autonomous limit set to be connected. 
Bandt and Mesing \cite{BM} studied the topology of IFSs ``of finite type'', which offers an interesting viewpoint. 
For autonomous IFSs of finite type, 
Luo and Xiong \cite{LX} gave an equivalent condition for the autonomous limit sets to be totally disconnected. 
For non-autonomous IFSs,  
Cristea \cite{Cristea} studied a concrete example of a randomly generated non-autonomous IFS,
focusing on when its limit set is connected and when it is totally disconnected.

Beyond connectedness, 
Sumi \cite{Sumi09} generalized Hata's result within a simplicial framework.  
He considered the nerve complex associated with the small copies appearing at the $k$th generation.  
This enabled a systematic investigation of the (co)homology groups of autonomous limit sets,
referred to as interaction (co)homology groups.
In addition to topological properties,
he also defined a new notion of dynamical complexity for autonomous IFSs through topological invariants.

One of the main goals of the present paper is to extend Sumi's homology theory to the non-autonomous setting.
This extension is a natural generalization of Sumi’s framework,
and is expected to provide a unified tool for analyzing a wide class of systems,
including randomly generated fractals and other dynamical systems with stochastic or non-stationary behavior.

Another main purpose is to study the topological properties of the so-called fractal squares and their generalizations. 
A typical example is the following.

\begin{example}\label{ex:fracSqSimple}
Let $X = [0, 1]^{2}$, the unit square, and set $I = \{0, 1, 2\} \times \{0, 1, 2\}$. 
For each $\mathbf{i} = (i_{1}, i_{2}) \in I$, 
define a contractive map $f_{\mathbf{i}}  \colon X \to X$ by 
$$f_{\mathbf{i}}(x, y) = \left(\frac{x +i_{1}}{3}, \frac{y +i_{2}}{ 3}\right).$$
For every $j \geq 1$, let $I^{(j)} \subset I$ be a non-empty subset. 
Then $\Phi^{(j)} = \{f_{\mathbf{i}}\}_{\mathbf{i}\in I^{(j)}}$ forms a non-autonomous IFS 
$(\Phi^{(j)})_{j \geq 1}$. 
\end{example}

We call the limit set of Example~\ref{ex:fracSqSimple} a non-autonomous fractal square.
Studying non-autonomous fractal squares is challenging because they do not satisfy a structural condition (see Definition~\ref{def:pub}) introduced in \cite{Sumi09}. 

\begin{figure}[h]
    \centering
    \includegraphics[width=0.3\linewidth]{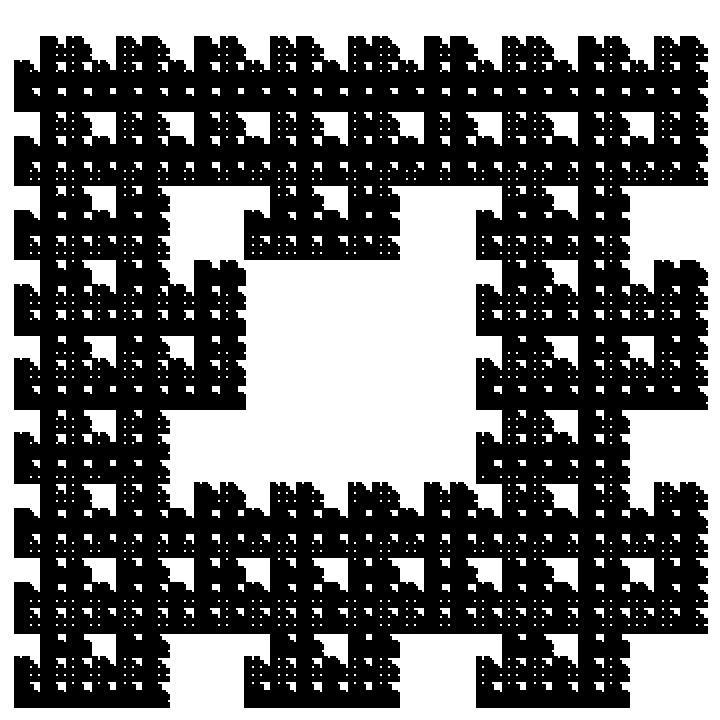}
    \includegraphics[width=0.3\linewidth]{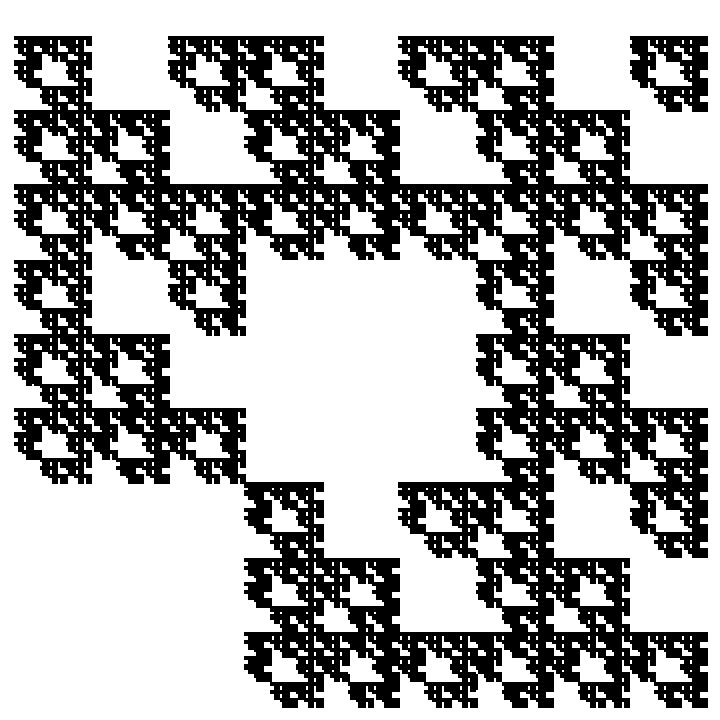}
    \includegraphics[width=0.3\linewidth]{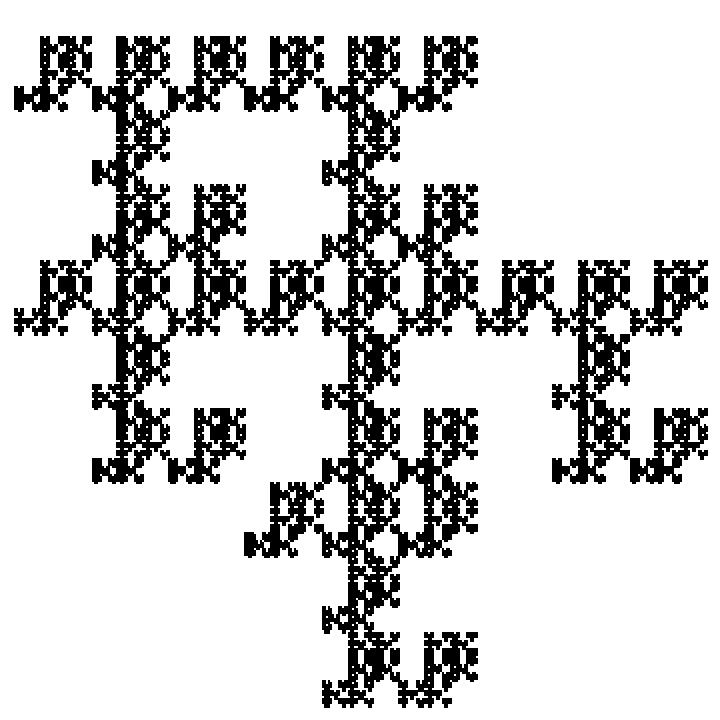}
    \caption{Samples of non-autonomous fractal squares.
    These are randomly constructed as in Theorem \ref{th:randomSqDim2}, 
    with parameters $r = 1$, $2$, $3$, respectively, from left to right, and with  $n_1=n_2=3$.}
    \label{fig:fracSq}
\end{figure}

Autonomous fractal squares have been extensively studied in the literature. 
Their topological properties have been widely investigated, including
(total dis)connectedness \cite{Ro}, Lipschitz equivalence \cite{LL, RW}, and H\"older equivalence \cite{ZL}. 
In a notable contribution, Xiao \cite{Xiao} investigated conditions under which an autonomous fractal square has finitely many connected components.
While Xiao's approach was partly a reworking of Sumi's theory, 
he introduced new ideas that enabled a precise count of the number of components. 
See also the survey \cite{LR} by Luo and Rao.

In this paper, we develop a topological theory for random IFSs and apply it to the study of non-autonomous fractal squares.
We establish conditions under which the limit set is (totally dis)connected.
Our result provides an answer to an analog of Mandelbrot’s percolation problem (to be reviewed in the next subsection).
We also compute the first \v{C}ech homology group $\check{H}_1$ of non-autonomous fractal squares. 

The passage from autonomous to non-autonomous systems is not merely formal. In the autonomous setting, the same pattern of overlaps is repeated at every scale. By contrast, in a non-autonomous IFS, both the pieces appearing at a given stage and the manner in which they intersect may change from one stage to the next. Consequently, the topology of the limit set can no longer be described by repeatedly applying a single finite combinatorial model. This difficulty is particularly pronounced for fractal squares, whose pieces may intersect along sets of positive dimension and which lie beyond the scope of several earlier methods.

To overcome this difficulty, we construct a sequence of simplicial models that record the overlap structure at each scale. We show that these models retain enough information to recover the \v{C}ech homology and cohomology of the limiting set. For concrete calculations, we further decompose each model so as to distinguish the homology inherited from individual pieces from the new homology created by intersections between different pieces. The resulting exact sequences provide recursive formulas for the homology of the finite-stage approximations.

Probability theory also allows us to move beyond the analysis of individual realizations and determine the behavior shared by almost every sequence. A central insight is to regard the successive stages at which new homological cycles appear as renewal times. This viewpoint converts the apparently irregular evolution of topological complexity into a tractable asymptotic problem. By combining renewal theory with the recursive homological formulas, we determine the precise almost-sure exponential growth rate of the Betti numbers.

\subsection{Main theorems}
\setcounter{res}{0}
In Section \ref{sec:prelim}, we present a probabilistic result on totally disconnected limit sets generated by random IFSs.

\begin{result}[Theorem \ref{th:A}]\label{res:prelim}
    Let $\{f_i\}_{i \in I}$ be an autonomous IFS which is post-critically countable (see Definition \ref{def:pcf}).
    Suppose that $f_i$ is injective for every $i\in I.$
    We choose the index sets $I^{(j)} \subset I$ independently and according to a fixed distribution such that the probability of $i \notin I^{(1)}$ is positive for every $i \in I$.   
	Then almost surely, 
	the resulting limit set of $(\{f_i\}_{i \in I^{(j)}})_{j=1}^{\infty}$ is totally disconnected. 
\end{result}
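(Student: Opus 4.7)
The plan is to show that, almost surely, the entire (countable) set of self-similar boundary points is disjoint from the random limit set $J$; total disconnectedness of $J$ then follows at once from the cylinder decomposition. Let $K$ denote the autonomous limit set of $\{f_i\}_{i \in I}$. The pcf hypothesis provides a finite post-critical set $\mathcal{P}\subset I^{\NN}$, hence a finite boundary set $V_0:=\Pi(\mathcal{P})\subset K$. The key geometric input is that, for any two distinct words $w\ne w'$ of the same length $n$, the intersection $f_w(K)\cap f_{w'}(K)$ is contained in $f_u(V_0)$, where $u$ denotes their longest common prefix. Consequently $V:=\bigcup_{n\ge 0}\bigcup_{w\in I^n} f_w(V_0)$ is a countable union of finite sets, and every point of $V$ has only finitely many autonomous addresses, each eventually post-critical.

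The probabilistic heart of the argument is to show $\mathbb{P}(V\cap J=\emptyset)=1$. Fix $v\in V$ and enumerate its autonomous addresses $\rho^{(1)},\dots,\rho^{(N)}\in I^{\NN}$. For $v$ to belong to $J$ it is necessary that some $\rho^{(l)}=(\rho^{(l)}_j)_{j\ge 1}$ satisfies $\rho^{(l)}_j\in I^{(j)}$ for every $j\ge 1$. Setting $p:=\min_{i\in I}\mathbb{P}(i\notin I^{(1)})>0$, the events $\{\rho^{(l)}_j\in I^{(j)}\}_{j\ge 1}$ are independent and each has probability at most $1-p<1$, so $\mathbb{P}(\rho^{(l)}\text{ is a non-autonomous address of }v)=0$. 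A finite union over $l$ and a countable union over $v\in V$ then yield $\mathbb{P}(V\cap J\ne\emptyset)=0$.

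To conclude, on the almost-sure event $V\cap J=\emptyset$ the limit set admits, for each $n\ge 1$, the partition
$$J=\bigsqcup_{w\in I^{(1)}\times\cdots\times I^{(n)}}\bigl(J\cap f_w(K)\bigr),$$
since any overlap $f_w(K)\cap f_{w'}(K)$ with $w\ne w'$ lies in $V$ and hence misses $J$, and any $f_w(K)$ indexed by $w\notin\prod_{m=1}^n I^{(m)}$ can only meet $J$ inside $V$ for the same reason. Each piece $J\cap f_w(K)$ is closed in $J$ and, being one of finitely many sets in a partition into closed sets, is also open in $J$. Since $\diam f_w(K)\le c^n\diam X\to 0$, we obtain a nested sequence of clopen partitions of $J$ with mesh tending to $0$, which exactly says that $J$ is totally disconnected.

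I expect the main obstacle to be the geometric preparation rather than the probability: one must establish the inclusion $f_w(K)\cap f_{w'}(K)\subset f_u(V_0)$ and the finiteness of $\Pi^{-1}(v)$ for each $v\in V$. Both are standard consequences of the recursive definitions of the critical and post-critical sets, but they rely on the injectivity-type properties typically built into the pcf framework. Once these geometric facts are in hand, the argument reduces to a straightforward application of independence and a countable union bound, without any appeal to more delicate tools like branching processes or percolation estimates.
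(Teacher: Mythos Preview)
Your argument is correct and is essentially the paper's proof recast in the ambient space rather than in symbol space. The paper avoids your geometric preparation entirely by staying symbolic: it shows that almost surely no $\mathfrak p\in\mathcal P$ occurs as a tail of the random address space (i.e.\ $\mathcal P\cap\bigcup_{m}\prod_{j} I^{(m+j-1)}=\emptyset$), and then argues that on this event the restricted coding map $\Pi|_{\prod_j I^{(j)}}$ is injective, since if $\Pi(\mathfrak i)=\Pi(\mathfrak i')$ with $\mathfrak i\ne\mathfrak i'$ then the tail after their first disagreement lies in $\mathcal P$; total disconnectedness follows at once because $\prod_j I^{(j)}$ is a Cantor set. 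Your event $V\cap J=\emptyset$ and the clopen-partition conclusion are the real-space translations of exactly these two steps, so the paper's route simply bypasses the need to analyse $\Pi^{-1}(v)$ for each $v\in V$ and the inclusions $f_w(K)\cap f_{w'}(K)\subset f_u(V_0)$ that you flag as the main obstacle. Both arguments rely on the injectivity of each $f_i$, which the paper states as an explicit hypothesis.
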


Theorem~\ref{res:prelim} shows that, in the post-critically countable regime, random
deletion typically destroys all symbolic identifications and forces the
limit set to be totally disconnected. Fractal squares lie beyond this
regime: their overlaps are governed by infinitely many post-critical
configurations. Their analysis therefore requires a different mechanism,
capable of retaining and organizing the overlap structure rather than
eliminating it.

In Section \ref{sec:sCpx}, 
we establish a 
simplicial
framework in which the limit sets of non-autonomous IFSs can be studied.
Using a certain kind of self-similarity (Lemma \ref{lem:Huchinson}), 
we construct a nested sequence of simplicial complexes $\mathcal{N}_{j, k}$ (Definition \ref{def:CechSumi}) 
that capture the overlapping structure of the IFS.
The main result here identifies the \v{C}ech homology group with the limit of the homology groups of these complexes.

\begin{result}[Theorem \ref{th:cechsumi} and Remark \ref{rem:CohomCoeff}]\label{res:CechSumi}
    Let $G$ be an  abelian group and consider homology groups with coefficients in $G$. Then,  
    the group $\varprojlim_{k}{H}_q(\mathcal{N}_{0, k}; G)$ defined as the inverse limit of homology groups is isomorphic to 
    the \v{C}ech homology group $\check{H}_q(J; G)$ of the limit set $J$ 
    for every $q \geq 0$. 
    Dually, $\varinjlim_{k}{H}^{q}(\mathcal{N}_{0, k}; G)$ is isomorphic to the \v{C}ech cohomology group $\check{H}^{q}(J; G).$
\end{result}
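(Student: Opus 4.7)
The plan is to identify each $\mathcal{N}_{1,k}$ with the nerve of a suitable open cover of $J$ and then invoke the standard description of \v{C}ech (co)homology as a (co)limit of nerve (co)homology along a cofinal family of finite open covers.

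For each admissible word $\mathbf{i} = (i_{1}, \ldots, i_{k})$ with $i_{j} \in I^{(j)}$, set $J_{\mathbf{i}} = f^{(1)}_{i_{1}} \circ \cdots \circ f^{(k)}_{i_{k}}(X) \cap J$. By the uniform contraction hypothesis, $\diam(J_{\mathbf{i}}) \leq c^{k}\diam(X)$, and the self-similarity of Lemma \ref{lem:Huchinson} gives $J = \bigcup_{\mathbf{i}} J_{\mathbf{i}}$. To obtain a finite open cover $\mathcal{U}_{k}$ of $J$ I would discard the empty $J_{\mathbf{i}}$ and thicken each remaining one by a small $\varepsilon_{k} > 0$ inside $J$, chosen smaller than the minimal distance between any pair $J_{\mathbf{i}}, J_{\mathbf{j}}$ that is actually disjoint; such a choice exists for each fixed $k$ because there are finitely many pairs and the sets are compact. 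Since $\mathrm{mesh}(\mathcal{U}_{k}) \to 0$, the sequence $(\mathcal{U}_{k})_{k}$ is cofinal among finite open covers of the compact metric space $J$.

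Next I would verify that the nerve $\mathcal{N}(\mathcal{U}_{k})$ is canonically isomorphic, as a simplicial complex, to $\mathcal{N}_{1,k}$. The key point is that, by the choice of $\varepsilon_{k}$, a family $\{\mathbf{i}^{(0)}, \ldots, \mathbf{i}^{(q)}\}$ of words spans a simplex of $\mathcal{N}(\mathcal{U}_{k})$ iff the corresponding $J_{\mathbf{i}^{(\ell)}}$ have a common point, which is precisely the combinatorial condition defining $\mathcal{N}_{1,k}$. Moreover, the truncation map $\mathcal{N}_{1,k+1} \to \mathcal{N}_{1,k}$ that drops the last letter of each word corresponds to the simplicial map induced by the refinement $\mathcal{U}_{k+1} \preceq \mathcal{U}_{k}$, since $J_{(\mathbf{i}, i_{k+1})} \subset J_{\mathbf{i}}$. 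The result then follows from the standard theorem: for a compact metric space $Y$ and any cofinal sequence $(\mathcal{V}_{k})$ of finite open covers one has $\check{H}^{q}(Y; G) \cong \varinjlim_{k} H^{q}(\mathcal{N}(\mathcal{V}_{k}); G)$, and—because each nerve is a finite polyhedron—also $\check{H}_{q}(Y; G) \cong \varprojlim_{k} H_{q}(\mathcal{N}(\mathcal{V}_{k}); G)$.

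The main obstacle is the identification between $\mathcal{N}_{1,k}$ and $\mathcal{N}(\mathcal{U}_{k})$. Sumi-type combinatorial nerves are naturally defined via intersections of the form $f^{(1)}_{i_{1}} \circ \cdots \circ f^{(k)}_{i_{k}}(X) \cap f^{(1)}_{j_{1}} \circ \cdots \circ f^{(k)}_{j_{k}}(X)$, which may be nonempty while the corresponding intersection on $J$ is empty; controlling this requires a careful application of Lemma \ref{lem:Huchinson} together with the separation-based choice of $\varepsilon_{k}$. A secondary subtlety, relevant only to the homology statement, is that inverse limits are not exact in general, so one must exploit the fact that $(\mathcal{N}_{1,k})_{k}$ is an inverse \emph{sequence} of \emph{finite} simplicial complexes, which is exactly the setting in which the \v{C}ech homology of a compact metric space is given by such a limit with arbitrary coefficient group $G$.
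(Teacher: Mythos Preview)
Your overall plan—thicken a natural finite closed cover of $J$ to a finite open cover with the same nerve and then invoke cofinality—is exactly the paper's argument. The gap is in which closed cover you thicken. You take $J_{\mathbf{i}}=f^{(1)}_{i_1}\circ\cdots\circ f^{(k)}_{i_k}(X)\cap J$, but Definition~\ref{def:CechSumi} does \emph{not} build $\mathcal{N}_{1,k}$ from the cylinders $f_v(X)$: it is the nerve of $\{f_v(J_k)\}_v$, the images of the shifted limit set $J_k$. In general $f_v(X)\cap J\supsetneq f_v(J_k)$, and the nerve of $\{f_v(X)\cap J\}_v$ can acquire extra simplices. For instance, on $X=[0,1]$ take $f_0^{(1)}(x)=x/2$, $f_1^{(1)}(x)=x/4+1/4$, and $\Phi^{(j)}=\{x\mapsto x/2\}$ for $j\ge 2$; then $J_2=\{0\}$, $J=\{0,1/4\}$, and the paper's $\mathcal{N}_{1,2}$ consists of two isolated vertices, yet $f_0^{(1)}(X)\cap J=\{0,1/4\}$ meets $f_1^{(1)}(X)\cap J=\{1/4\}$, so your cover already produces a $1$-simplex before any thickening. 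Hence the claim that ``$\bigcap_\ell J_{\mathbf{i}^{(\ell)}}\ne\emptyset$ is precisely the combinatorial condition defining $\mathcal{N}_{1,k}$'' fails, and the ``main obstacle'' you flag at the end rests on a misreading of the definition (the Sumi-type nerve here is \emph{not} formed from intersections of the $f_v(X)$).

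The fix is immediate and is precisely what the paper does: thicken the sets $f_v(J_k)$ themselves by a small $\delta_k>0$, chosen—just as you propose—below the minimum positive gap among the finitely many disjoint compact pairs. Then the nerve of the resulting open cover equals $\mathcal{N}_{1,k}$ tautologically, the truncation map $\phi_{1,k}$ is a refinement map because $f_{vi}(J_{k+1})\subset f_v(J_k)$, and cofinality follows from $\diam f_v(J_k)\le c^{k-1}\diam X$ together with the Lebesgue covering lemma. No ``careful application of Lemma~\ref{lem:Huchinson}'' beyond the covering identity is needed; once you use the correct closed cover, your sketch and the paper's proof coincide.
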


We call $\varprojlim_{k}{H}_{\ast}(\mathcal{N}_{0, k})$ and $\varinjlim_{k}{H}^{\ast}(\mathcal{N}_{0, k})$ 
the \v{C}ech-Sumi homology and cohomology groups for the non-autonomous IFS  $(\Phi^{(j)})_{j=1}^{\infty}$, respectively. 
The theorem above generalizes Sumi's result to the non-autonomous setting; see \cite[Remark~2.42]{Sumi09}. 

In Section \ref{sec:conn}, 
we establish Theorem~\ref{th:ConToInverselimit}, which shows correspondence between components of the limit set and components of simplicial complexes. 
As a consequence, we  obtain an intrinsic sufficient condition for total disconnectedness and a Hata-type result. 

\begin{result}[Corollary \ref{cor:totDisc}]\label{res:totDisc}
	If 
    $$\lim_{k \to \infty} c^{k} \max\{\#V(\mathcal{K}) \colon \mathcal{K} \text{ is a component of } \mathcal{N}_{0, k}\} = 0,$$ 
    then the limit set $J$ is totally disconnected. 
    Here,         
    $c$ is the uniform upper bound of the Lipschitz constants in Definition~\ref{def:NIFS}, 
    $V(\mathcal{K})$ denotes the vertex set of a simplicial complex $\mathcal{K}$,  and
    $\# A$ denotes the number of elements of a set $A$. 
\end{result}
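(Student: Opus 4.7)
My plan is to combine the component-level correspondence between $J$ and $\mathcal{N}_{1,k}$ that underlies Result~\ref{res:Hata} with an elementary diameter bound coming from the uniform contraction. The crucial geometric input, which I would cite from Section~\ref{sec:conn}, is the following: for every $k \geq 1$ and every connected subset $C \subseteq J$, there is a single connected component $\mathcal{K}$ of $\mathcal{N}_{1,k}$ whose vertex set indexes a collection of contractive pieces that together cover $C$, namely,
\[
C \;\subseteq\; U_k(\mathcal{K}) \;:=\; \bigcup_{(i_1, \ldots, i_k) \in V(\mathcal{K})} f^{(1)}_{i_1} \circ \cdots \circ f^{(k)}_{i_k}(X).
\]
This is precisely what makes a Hata-type theorem go through, and it is the only ingredient from Section~\ref{sec:conn} that I need.

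Granting this, the proof reduces to a diameter estimate. The uniform bound $\Lip(f^{(j)}_i) \leq c$ ensures that each piece $f^{(1)}_{i_1} \circ \cdots \circ f^{(k)}_{i_k}(X)$ has diameter at most $c^k \diam(X)$. Since adjacent vertices of $\mathcal{K}$ correspond by construction to overlapping pieces, any two points of $U_k(\mathcal{K})$ can be joined by a chain of at most $\#V(\mathcal{K})$ such pieces along a spanning tree of the $1$-skeleton of $\mathcal{K}$; a telescoping triangle inequality then yields $\diam U_k(\mathcal{K}) \leq \#V(\mathcal{K}) \cdot c^k \diam(X)$. Combining this with the containment above,
\[
\diam C \;\leq\; c \cdot \diam(X) \cdot c^{k-1} \max\{\#V(\mathcal{K}) : \mathcal{K}\text{ is a component of }\mathcal{N}_{1,k}\}.
\]
By hypothesis the right-hand side tends to $0$ as $k \to \infty$, so $\diam C = 0$. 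Hence every connected subset of $J$ is a single point, and $J$ is totally disconnected.

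The main obstacle is the correspondence cited in the first paragraph: without a clean translation between connected components of the limit set and connected components of the simplicial complex, the diameter estimate has nothing to bite on. In the autonomous setting this is exactly Hata's classical argument, and in the non-autonomous setting it is what Section~\ref{sec:conn} must establish in order to prove Result~\ref{res:Hata}, so I treat it as available. Beyond that, the only minor point to verify is that a connected simplicial component $\mathcal{K}$ really yields a connected union of $\#V(\mathcal{K})$ pairwise-overlapping closed pieces in $X$, so that the spanning-tree chain argument is legitimate; this should follow directly from the nerve-like definition of $\mathcal{N}_{1,k}$.
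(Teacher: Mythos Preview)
Your proposal is correct and follows essentially the same route as the paper: the paper's Theorem~\ref{th:C} supplies exactly the containment $C \subset \bigcup_{v \in V(\mathcal{C}_k)} f_v(J_k)$ you cite, and Theorem~\ref{th:D} then bounds the diameter of this union by $c^{k-1}\diam(X)\cdot\#V(\mathcal{C}_k)$ via the same chain-of-overlapping-pieces triangle inequality (your spanning-tree formulation is just a slightly more explicit version of the paper's one-line appeal to the triangle inequality). The only discrepancy is a harmless off-by-one in your indexing of $V(\mathcal{N}_{1,k})$ (the paper has $V(\mathcal{N}_{1,k}) = I^{(1)}\times\cdots\times I^{(k-1)}$, hence $c^{k-1}$ directly), and your use of $X$ in place of $J_k$, which only weakens the bound and does no harm.
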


\begin{result}[Corollary \ref{cor:Hata}]\label{res:Hata}
    The following are equivalent.
	\begin{enumerate}
		\item The limit set $J$ is connected. 
		\item For every $k > 0$, 
        the simplicial complex $\mathcal{N}_{0, k}$ is connected. 
	\end{enumerate} 
\end{result}

\begin{rem}\label{rem:ConnectedButNotPathConnected}
    For an autonomous IFS, the limit set is locally connected and path-connected if it is connected \cite{Hata}. However, Proposition~\ref{prop:TopSineCurve} shows that a non-autonomous limit set can be neither path-connected nor locally connected even if it is connected. 
\end{rem}

To this end, we give a sufficient condition for local connectedness and path-connectedness in Section~\ref{sec:exact}. 

\begin{result}[Proposition~\ref{prop:localConn} and Corollaries~\ref{cor:SuffConn} and \ref{cor:CompIsPathCon}]\label{res:PathConn}
     If $\mathcal{N}_{k, k+1}$ is connected for every $k > 0$, 
     then $J$ is locally connected. 
     Moreover, every connected component of $J$ is path-connected.
     If, in addition, $\mathcal{N}_{0,1}$ is connected, then $J$ is connected and path-connected.
\end{result}

In Section \ref{sec:exact}, 
we also define a suitable subcomplex $\mathcal{M}_{j, k, \ell}$ in Definition \ref{def:subcomplex}. 
The following exact sequence allows us to compute homology groups inductively.

\begin{result}[Theorem \ref{th:exact}]\label{res:exact}
	For the subcomplex $\mathcal{M}_{j, k, \ell}$ of $\mathcal{N}_{j, \ell}$, 
    there is a long exact sequence of homology groups
	\begin{equation*}
	\begin{tikzcd}[column sep=1em]
	\cdots \arrow[r, "\partial"] & {H}_q(\mathcal{M}_{j, k, \ell}) \arrow[r] & {H}_q(\mathcal{N}_{j, \ell}) 	\arrow[r] & {H}_q(\mathcal{N}_{j, \ell}, \mathcal{M}_{j, k, \ell})
	\arrow[r, "\partial"] & {H}_{q-1}(\mathcal{M}_{j, k, \ell}) \arrow[r] & \cdots.
	\end{tikzcd}
	\end{equation*}
\end{result}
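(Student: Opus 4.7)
The plan is to deduce this as an immediate instance of the standard long exact sequence of a pair in simplicial homology, once the subcomplex status of $\mathcal{M}_{j,k,\ell}$ inside $\mathcal{N}_{j,\ell}$ is in hand. Concretely, I will (i) verify the subcomplex property from Definition \ref{def:subcomplex}, (ii) assemble the short exact sequence of chain complexes associated with the pair, and (iii) apply the zig-zag (snake) lemma.

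First, I would read off from Definition \ref{def:subcomplex} that $V(\mathcal{M}_{j,k,\ell}) \subset V(\mathcal{N}_{j,\ell})$ and that every simplex of $\mathcal{M}_{j,k,\ell}$ is a simplex of $\mathcal{N}_{j,\ell}$ whose faces all lie in $\mathcal{M}_{j,k,\ell}$ as well. This is a purely combinatorial check on the way $\mathcal{M}_{j,k,\ell}$ is cut out of $\mathcal{N}_{j,\ell}$, and it is exactly what is needed for the inclusion $\iota\colon \mathcal{M}_{j,k,\ell} \hookrightarrow \mathcal{N}_{j,\ell}$ to induce an injective chain map $\iota_{\#}\colon C_{*}(\mathcal{M}_{j,k,\ell}) \to C_{*}(\mathcal{N}_{j,\ell})$ at the level of (ordered, or equivalently oriented) simplicial chains.

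Second, I would form the quotient chain complex
\begin{equation*}
C_{*}(\mathcal{N}_{j,\ell}, \mathcal{M}_{j,k,\ell}) := C_{*}(\mathcal{N}_{j,\ell}) \big/ \iota_{\#}\bigl(C_{*}(\mathcal{M}_{j,k,\ell})\bigr),
\end{equation*}
which inherits a boundary operator because $\iota_{\#}$ commutes with $\partial$. By definition, its homology groups are the relative groups ${H}_{q}(\mathcal{N}_{j,\ell}, \mathcal{M}_{j,k,\ell})$ appearing in the statement. The resulting short exact sequence of chain complexes
\begin{equation*}
0 \longrightarrow C_{*}(\mathcal{M}_{j,k,\ell}) \xrightarrow{\iota_{\#}} C_{*}(\mathcal{N}_{j,\ell}) \xrightarrow{\pi_{\#}} C_{*}(\mathcal{N}_{j,\ell}, \mathcal{M}_{j,k,\ell}) \longrightarrow 0
\end{equation*}
is then fed into the zig-zag lemma, which yields the long exact sequence with the connecting homomorphism $\partial$ defined by the usual diagram chase (lift a relative cycle, take its boundary, observe it descends to $\mathcal{M}_{j,k,\ell}$).

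There is essentially no genuine obstacle here: the algebraic content is the classical long exact sequence of a pair (see e.g.\ Hatcher's textbook), and the only item specific to our framework is the subcomplex verification in step (i). If anything warrants care, it is ensuring that the definition of $\mathcal{M}_{j,k,\ell}$ used in Definition \ref{def:subcomplex} really closes under the face operation, so that ``subcomplex'' is meant in the strong simplicial sense rather than merely as a subset of simplices. Once that is spelled out, the long exact sequence is automatic and works for any coefficient group $G$, matching Remark \ref{rem:CohomCoeff}.
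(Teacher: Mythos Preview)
Your proposal is correct and takes essentially the same approach as the paper: the paper simply invokes the standard long exact sequence of a pair, citing Spanier and Hatcher, having already verified the subcomplex property via Lemma~\ref{lem:xiSimplicial} (each $\xi_u(\mathcal{N}_{k,\ell})$ is a subcomplex, hence so is their union $\mathcal{M}_{j,k,\ell}$). Your step~(i) is thus handled by that lemma, and your steps~(ii)--(iii) just unpack what the paper's citation encapsulates.
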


Finally, Section \ref{sec:ex} applies this theory to non-autonomous fractal squares and their higher-dimensional analogs.
Definition~\ref{ex:fracSq} introduces a class of $d$-dimensional fractal cubes constructed by subdividing the $x_k$-coordinate direction into $n_k$ equal parts for each $k=1,\ldots,d$. 
More precisely, at each iteration, we randomly remove $r$ of the resulting small cubes and retain the maps corresponding to the remaining cubes.
Throughout Section~\ref{sec:ex}, we study non-autonomous IFSs arising from this construction. 

The following theorem summarizes the connectedness and homology of the resulting limit set $J$. 

\begin{result}[Theorems \ref{th:randomSq}, \ref{th:H0Growth}, and \ref{th:randomSqDim2}, together with Remark \ref{rem:tosionFree}]\label{res:ex}
    Consider a non-autonomous fractal $d$-cube. 
    Let $r \in \NN$ such that $1 \leq r \leq (\prod_{k=1}^{d}n_{k}) -1$. 
    Suppose that the sets $I^{(j)}$ are chosen independently according to the uniform distribution 
    so  that $\# (I\setminus I^{(j)}) = r$. 
    Then the limit set $J$ of $(\Phi^{(j)})_{j=1}^{\infty}$ satisfies the following. 
    \begin{enumerate}[label=($d$.\arabic*)]
        \item If $r < d$, then $J$ is connected and locally connected. 
        \item If there exists $k =1, 2, \dots, d$ such that $r < \prod_{\ell \neq k} n_{\ell}$, 
        then $J$ contains a line segment which connects the face  $x_k = 0$ to the opposite face $x_k = 1$. 
        \item If there exists $k =1, 2, \dots, d$ such that $r \geq \prod_{\ell \neq k} n_{\ell}$, then almost surely every connected component $C$ of $J$ has its projection $\pi_{k}(C)$ onto the $x_k$-coordinate equal to a single point. 
        \item If $r \geq \prod_{\ell \neq k} n_{\ell}$ for every $k =1, 2, \dots, d$, then almost surely $J$ is totally disconnected. 
    \end{enumerate}

    Moreover, 
    in the planar case $d = 2$ we obtain detailed homology results:  
    The limit set $J$ satisfies $\check{H}_q(J) = 0$ for every $q \geq 2$. 
    Furthermore, we have the following. 
    \begin{enumerate}[label=($2$.\arabic*)]
        \item If $r = 1,$ then $\check{H}_0(J) \cong \ZZ$. In addition to this, we have the following. 
        \begin{enumerate}
            \item If $n_1 = n_2 = 2,$ then $\check{H}_1(J) = 0$ almost surely.
            \item If $(n_1, n_2) \neq (2, 2),$ then 
            $$\lim_{k \to \infty}\frac{1}{k}\log (\mathrm{rank} H_1(\mathcal{N}_{0, k})) = \log(n_1n_2 - r)$$
            almost surely. 
        \end{enumerate}
        \item If $2 \leq r < \min\{n_{1}, n_{2}\}$,   
        then 
        $$\lim_{k \to \infty}\frac{1}{k}\log (\mathrm{rank} H_0(\mathcal{N}_{0, k})) = \log(n_1n_2-r)$$ 
        and 
        $$\lim_{k \to \infty}\frac{1}{k}\log (\mathrm{rank} H_1(\mathcal{N}_{0, k})) = \log(n_1n_2-r)$$
        almost surely. 
        \item If $n_{1} \leq r < n_{2}$ (resp. $n_{2} \leq r < n_{1}$), then almost surely, every connected component of $J$ is a horizontal (resp. vertical) line segment. One component has length $1$, and the others may degenerate to points.  
        \item If $r \geq \max\{n_{1}, n_{2}\}$, then almost surely $J$ is totally disconnected. 
    \end{enumerate}
    Furthermore, for all $k > 0$ and $q \geq 0$, each homology group
    ${H}_q(\mathcal{N}_{0, k})$ is free, and hence isomorphic to the cohomology group ${H}^q(\mathcal{N}_{0, k})$. 
\end{result}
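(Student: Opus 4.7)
The theorem combines general $d$-dimensional connectedness claims (d.1)--(d.4) with detailed planar homology claims (2.1)--(2.4) and a torsion-freeness remark. My plan is to treat the two parts in turn, relying on the framework developed earlier: Result~\ref{res:prelim} (random total disconnectedness for post-critically finite systems), Result~\ref{res:Hata} (Hata-type criterion), Result~\ref{res:totDisc} (sufficient condition for total disconnectedness), Result~\ref{res:exact} (long exact sequence for $\mathcal{M}_{j,k,\ell} \subset \mathcal{N}_{j,\ell}$), and Result~\ref{res:CechSumi} (passage to \v{C}ech homology via inverse limit). The independence of the random patterns $I^{(j)}$ across levels is the key probabilistic ingredient.

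\textbf{The $d$-dimensional claims.}
For (d.1), the plan is a combinatorial lemma: the face-adjacency graph of the $n_1 \times \cdots \times n_d$ grid is $d$-vertex-connected, so removing fewer than $d$ cubes preserves connectedness; iterated over scales this gives $\mathcal{N}_{1, k}$ connected for every $k > 1$ and hence connectedness of $J$ by Result~\ref{res:Hata}. Local connectedness follows by applying the same Hata criterion to each tail system $(\Phi^{(j)})_{j \geq m}$, whose limit sets $\Pi(\prod_{j \geq m} I^{(j)})$ are also path-connected and provide a diameter-shrinking basis of connected neighborhoods. For (d.2), since $r < \prod_{\ell \neq k} n_\ell$, pigeonhole produces at each level $j$ a transverse multi-index $(i^{(j)}_\ell)_{\ell \neq k}$ for which the full column in the $x_k$-direction lies in $I^{(j)}$; the intersection of the nested rectangular tubes obtained in this way is a closed line segment joining the faces $x_k = 0$ and $x_k = 1$. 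For (d.3), I would apply Result~\ref{res:prelim} to the $x_k$-projected one-dimensional IFS $\{x \mapsto (x+i_k)/n_k\}_{i_k=0}^{n_k-1}$, which is easily verified post-critically finite. Under $r \geq \prod_{\ell \neq k} n_\ell$, every value $i_k$ has positive probability of being absent from $\pi_k(I^{(j)})$ (put all $r$ missing cubes in the $i_k$-layer), so Result~\ref{res:prelim} gives that $\pi_k(J)$ is almost surely totally disconnected; any component $C \subset J$ has $\pi_k(C)$ connected and contained in this totally disconnected set, hence a single point. Finally, (d.4) is (d.3) applied coordinatewise.

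\textbf{The planar case.}
Write $N := n_1 n_2 - r$, the number of surviving cubes per level. Applying Result~\ref{res:exact} to $\mathcal{M}_{1, k-1, k} \subset \mathcal{N}_{1, k}$ expresses the relative complex as a disjoint union of $N^{k-1}$ level-$k$ nerves attached along lower-dimensional interfaces, giving a homology recursion. In case (2.1)(a) the $2 \times 2$ L-tromino has contractible nerve, so induction preserves simple-connectedness and gives $H_1(\mathcal{N}_{1, k}) = 0$, whence $\check{H}_1(J) = 0$ by Result~\ref{res:CechSumi}. In cases (2.1)(b) and (2.2), each attached pattern creates at least one new $1$-cycle (because $(n_1, n_2) \neq (2, 2)$ forces a hole for $r = 1$, while $r \geq 2$ allows additional holes and disconnections), yielding recurrences of the form $\mathrm{rank}\, H_1(\mathcal{N}_{1, k+1}) = N \cdot \mathrm{rank}\, H_1(\mathcal{N}_{1, k}) + O(N^{k})$ and similarly for $\mathrm{rank}\, H_1 - \mathrm{rank}\, H_0$; taking $\limsup \frac{1}{k}\log$ gives $\log N$. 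For (2.3), combine (d.3) on the thin axis (collapsing $\pi_k$-images of components) with (d.2) on the thick axis (producing a full length-$1$ horizontal or vertical segment, which must coincide with its own component by the projection collapse). For (2.4), apply (d.4). Vanishing $\check{H}_q(J) = 0$ for $q \geq 2$ follows because planar nerves are at most $3$-dimensional with contractible top-dimensional pieces (tetrahedra at four-cube corners), so $H_{\geq 2}(\mathcal{N}_{1, k})$ is absorbed into lower-dimensional subcomplexes and its inverse limit vanishes.

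\textbf{Freeness and main obstacle.}
For torsion-freeness of $H_q(\mathcal{N}_{1, k})$ in the planar case, I would show each nerve deformation-retracts onto a $2$-dimensional CW subcomplex by collapsing the contractible tetrahedral pieces where four cubes meet, after which the remaining $2$-cells are disks and integral homology is free; freeness then propagates through the long exact sequence, since every relative piece is a finite disjoint union of the same sort of free-homology building block and the connecting homomorphisms cannot introduce torsion between free groups when all the relative terms are free. The main obstacle is the sharp identification of the exponential growth rate in (2.1)(b) and (2.2): one must verify that the $1$-cycles created at each level are genuinely linearly independent in $H_1$ (not killed by boundaries from $2$-simplices or from $\mathcal{M}_{1, k-1, k}$), and that the almost-sure $\limsup$ attains equality---not merely an upper bound---$\log(n_1 n_2 - r)$. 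I expect this last sharpness to rely on a law-of-large-numbers argument applied to the i.i.d.\ patterns $I^{(j)}$, ensuring that the asymptotic density of ``loop-creating'' configurations is bounded away from zero almost surely.
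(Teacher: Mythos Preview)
Your outline is mostly sound on the $d$-dimensional side, and your approach to (d.3) via Result~\ref{res:prelim} applied to the projected one-dimensional IFS is a genuinely different and valid route from the paper's. The paper instead introduces the elementary notion of a \emph{cut normal to the $x_k$-axis} (some value $i_k$ missing from $\pi_k(I^{(j)})$), observes that under $r\ge\prod_{\ell\neq k}n_\ell$ such cuts occur with positive probability, and invokes Borel--Cantelli directly to squeeze $\diam \pi_k(C)$ to zero. Your post-critically-finite argument is slicker in that it reuses an existing theorem, while the paper's cut argument is more self-contained and also supplies the explicit diameter bound $\diam \pi_k(C)\le (n_k-1)/n_k^j$ that drives (2.3). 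One small gap in your (d.1): you pass from $d$-connectivity of the face-adjacency graph of $I$ to connectedness of $\mathcal{N}_{1,k}$, but the nerve is defined using $f_v(J_k)$, not $f_v(X)$. You still need a lemma (the paper's Lemma~\ref{lem:coreLine}) guaranteeing that whenever two small cubes are face-adjacent, the corresponding images of $J_k$ actually meet.

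The substantive omission is in the planar part. The paper's entire homology computation rests on the \emph{no-corner condition}: almost surely none of the limit sets $J_j$ contain the four corners of $[0,1]^2$. Once this holds, three or more level-$1$ pieces $f_{\mathbf{i}}(J_2)$ can never meet at a common point, so every nerve $\mathcal{N}_{j,k}$ is a \emph{graph} (no $q$-simplex for $q\ge 2$). This single observation immediately gives you $\check{H}_q(J)=0$ for $q\ge 2$, torsion-freeness of all $H_q(\mathcal{N}_{1,k})$ (homology of a graph is always free), and reduces the rank recursion to pure edge/vertex counting---the relative group $H_1(\mathcal{N}_{j,\ell},\mathcal{M}_{j,j+1,\ell})$ is literally the free abelian group on the ``bridge'' edges joining distinct level-$j$ blocks. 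Your proposed arguments for these three points (collapsing tetrahedra at four-cube corners, deformation retracting to a $2$-complex, showing $2$-cells are disks) are all attempts to handle higher simplices that, under the no-corner condition, simply are not there. Without the no-corner condition those arguments would need real work (e.g.\ why does the inverse limit of the $H_2$'s vanish when individual terms might not?), and your sketches do not supply it. The paper also needs the no-corner condition in the $2\times 2$ case (Lemma~\ref{lem:2by2Mjkl}) to pin down that there are exactly two bridge edges at each step, which is what forces $H_1=0$; your ``L-tromino has contractible nerve'' is the right intuition but uses this implicitly.
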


\begin{rem}
    By the Alexander duality theorem \cite[Theorem 6.2.16]{Spa}, 
    the topology of a set $J$ determines that of its complement.
    More precisely, 
    the \v{C}ech cohomology group $\check{H}^{d-q-1}(J; G)$ is isomorphic to the reduced homology group $\tilde{H}_{q}(\RR^d\setminus J;G)$ of the complement. 
    For a non-autonomous fractal rectangle $J \subset \RR^2$, 
    setting \(q=0\), we deduce from Theorem \ref{res:ex} that the number of bounded connected components of \(\mathbb R^2\setminus J\) is either zero or infinite,  
    according as $\mathrm{rank} \check{H}_1(J)$ is zero or infinite.
\end{rem}

In the planar case, our results give a complete almost-sure phase
diagram. Depending on the number of deleted rectangles, the limit set
is connected, has infinitely many connected components, decomposes into
parallel line segments, or is totally disconnected. Thus the model
exhibits several distinct topological regimes, separated by explicit
and sharp thresholds, whereas the corresponding picture in higher dimensions remains open.

Theorem \ref{res:ex} concerns not only topological properties but also dynamical aspects.
Namely, the growth rate of the rank of the (co)homology groups quantifies the dynamical complexity of IFSs.
This invariant provides new insight into how topological complexity reflects dynamical behavior, making it a particularly meaningful quantity in the study of IFSs.
See also \cite[Theorem 3.36]{Sumi09}. 

\subsection{Related work from various fields}\label{ssec:review}
To clarify our contribution, we review related results from several areas.

\subsubsection{Fractal dimensions}
In fractal geometry, much attention has been devoted to the dimension theory of IFSs.
For the non-autonomous setting, Rempe-Gillen and Urba\'nski \cite{RU16} showed that 
the Hausdorff dimension of the limit set is given by Bowen’s formula under the separating condition.
Further results 
for dimension theory of non-autonomous IFSs
can be found in \cite{FT, GM, HZ, KR}. 

The relationship between dimension and topology is not very strict. 
It is known that if the Hausdorff dimension is less than one, then the set is totally disconnected.
The converse, however, does not hold: one can construct a space with arbitrarily prescribed dimension which is homeomorphic to the Cantor set \cite{Ishiki}.
Another general fact is that topological dimension provides a lower bound for the Hausdorff dimension, although this estimate is usually far from sharp.

In dimension theory, imposing a separation condition often simplifies the analysis. In contrast, our work demonstrates that 
rich topological structures come from
the overlaps of small pieces. 
See also the first author's work \cite{Nak} for non-autonomous IFSs without the separating condition. 
Moreover, our results hold even if we do not assume conformality.

While the dimension theory of non-autonomous IFSs has been studied, 
the topological aspect of non-autonomous IFSs remains almost completely unexplored.
The examples developed in this paper, in which the homological growth rate agrees with entropy, point toward a potentially deeper connection with dimension theory that remains to be explored.

\subsubsection{Mandelbrot fractal percolation}\label{sssec:perc}
While we are partly interested in randomly generated non-autonomous fractal squares, 
another type of random fractal has also been studied by many authors. 
A central problem concerns the connectivity properties of the following random fractal. 

\begin{example}[Fractal percolation]\label{ex:percolation}
    Let $X$ be the unit square. 
    Divide the square into $3\times3$ equal subsquares. 
    Independently for each subsquare, retain it with probability $p$ and discard it with probability $1-p$.
    The same procedure is then applied recursively and independently to each retained subsquare.
    The intersection of the sets retained at all stages is called a Mandelbrot percolation fractal, or a fractal percolation set.
    This construction differs from that in Example~\ref{ex:fracSqSimple}
\end{example}

\begin{figure}[h]
    \centering
    \includegraphics[width=0.3\linewidth]{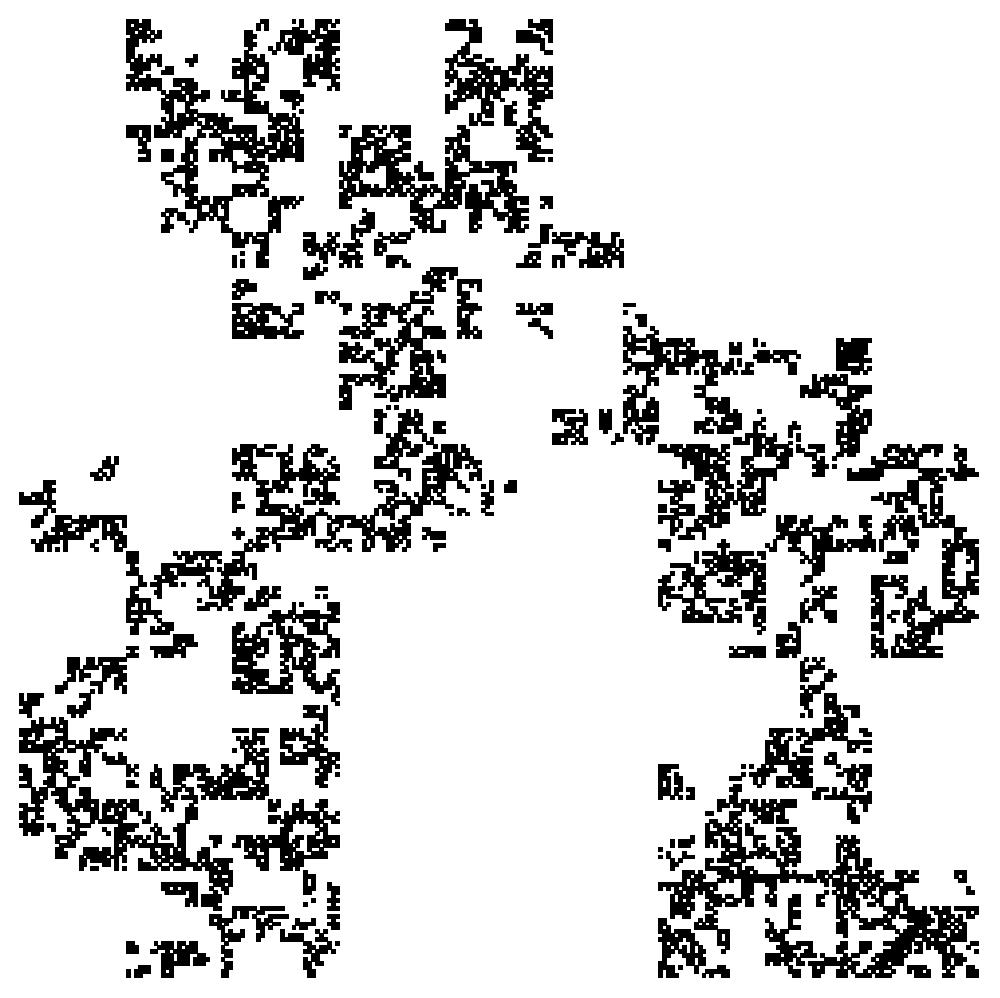}
    \includegraphics[width=0.3 \linewidth]{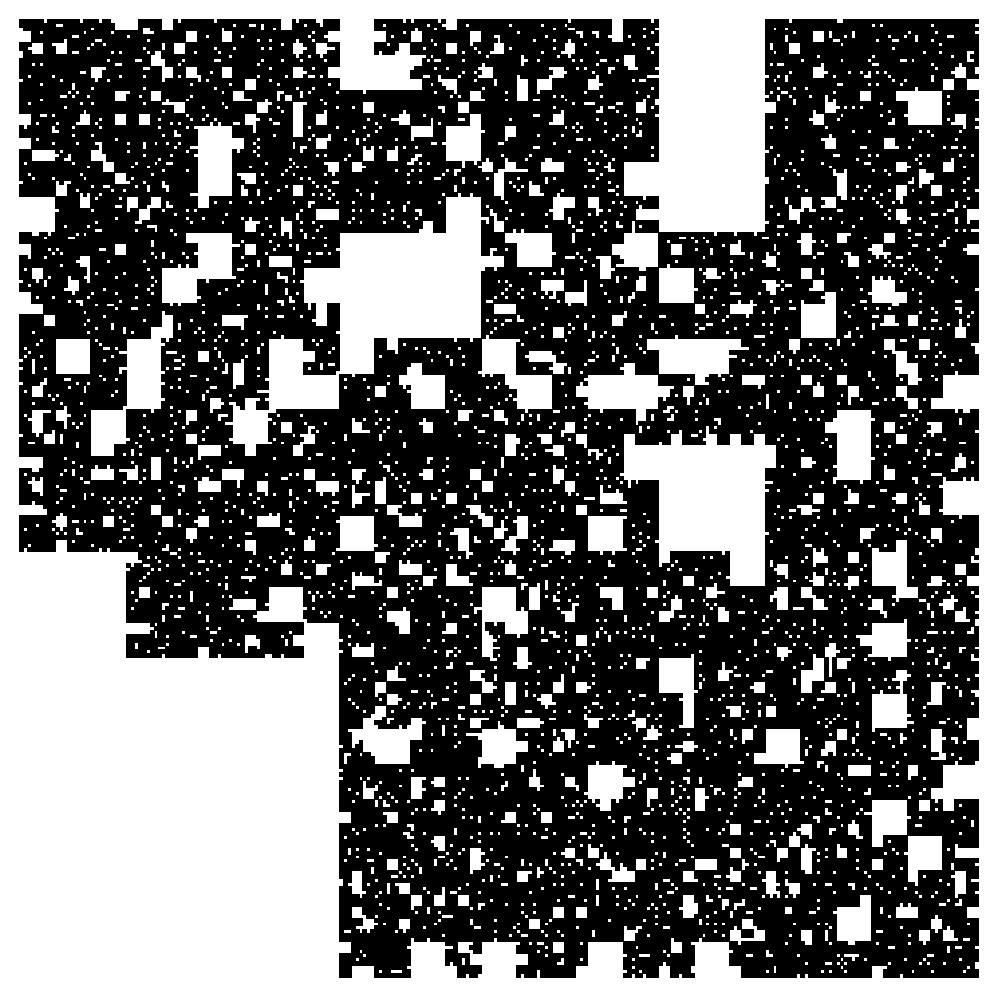}
    \caption{Samples of Mandelbrot percolation fractals with $p = 6/9$ and $p = 8/9$.}
    \label{fig:percolation}
\end{figure}

Since the decision to retain or discard each subsquare is made independently at each subsquare, 
the resulting set may exhibit a more intricate structure than the limit set of Example \ref{ex:fracSqSimple}.
Compare Figures \ref{fig:fracSq} and \ref{fig:percolation}.

A substantial body of work has been devoted to the critical phase transition in fractal percolation, particularly in connection with the existence of a path joining two opposite sides of the unit square. For further developments stemming from the pioneering work of Chayes et al.~\cite{CCD}, we refer the reader to Falconer~\cite[Section~15.2]{FalFG} and the references therein.

Falconer also studied the Mandelbrot percolation fractal and
he observed that, by selecting the retained squares appropriately, 
one can construct finite-stage approximations whose homological complexity 
increases strictly from one stage to the next, so that the limiting set has infinite connectivity; see  \cite[Example 11.5]{F}.
Although this concerns a specially arranged construction rather than the 
random non-autonomous model considered here, it anticipates the emergence 
of increasingly rich homological structure across scales. 

Theorem~\ref{res:ex} provides a systematic and quantitative counterpart to this 
observation: for natural random non-autonomous systems, we determine both 
the homology of the limiting set and the exponential growth rate of the 
homology groups of its finite-stage approximations.
This quantitative result is obtained by combining a simplicial framework adapted to IFSs with general tools from homology theory.

\subsubsection{Dynamical systems} 
Non-autonomous IFSs also arise naturally in dynamics, where they model
inverse branches along non-stationary or random orbits and provide a
flexible framework for studying dynamically defined repellers.
Indeed, the concept of a non-autonomous IFS was developed to obtain estimates of the Hausdorff dimension of the Julia set for transcendental functions \cite{RU16}.

The second author introduced the notion of ``stochastic bifurcation'' for random iterations of quadratic polynomial maps,
and investigated the (total dis)connectedness of the random Julia set \cite{W24}.
These works motivate our study of non-autonomous IFSs, with the goal of analyzing the topological properties of their limit sets.

\subsubsection{Topology of wild spaces}
From the viewpoint of pure topology, the present work can be regarded as a contribution to the algebraic topology of wild spaces.
Barratt and Milnor pointed out that singular homology may behave anomalously \cite{BMil}.
Related studies on wild spaces include 
investigations of the first singular homology group of the Hawaiian earring \cite{EK}, 
as well as the fundamental group of the Sierpi\'nski gasket \cite{ADTW}.

It is also worth noting that Sumi's homology theory shares a similar philosophy with the construction of the Anderson-Putnam complex for tiling spaces \cite{Sad}.
For self-similar tilings themselves, many topological aspects have been studied; 
see the survey \cite{AT}, for example.

While some of these studies emphasize the pathological aspects of wild spaces,
the present work seeks to establish a coherent theory for fractals.
Fractals are indeed wild spaces,
but their self-similarity endows them with a structural order.
This combination of wildness and order makes them especially valuable objects of study within the field of topology. 

The principal contribution of the present work is the construction of an abstract simplicial and homological framework for the limit sets of non-autonomous IFSs. This framework provides a common foundation for the systematic analysis of diverse concrete examples and, we hope, will facilitate further investigations into the topology of fractal spaces.

\subsection{Organization of the paper}
Section \ref{sec:prelim} proves the probabilistic result on random IFSs (Theorem \ref{res:prelim}). 
While this result places the developments of the subsequent sections in a broader probabilistic perspective, the section is logically independent of the remainder of the paper.
Section \ref{sec:sCpx}  introduces the \v{C}ech-Sumi (co)homology groups for non-autonomous IFSs and proves Theorem \ref{res:CechSumi}. 
In Section~\ref{sec:conn}, we develop a general theory on connectedness using simplicial methods (Theorems~\ref{res:totDisc} and \ref{res:Hata}).
In Section~\ref{sec:exact}, we construct the subcomplex and present Theorems\ref{res:PathConn} and \ref{res:exact}. 
Section \ref{sec:ex} is devoted to an application: the computation of the homology groups of fractal squares.

\section{Preliminary remarks on random IFSs}\label{sec:prelim}
An interesting class of non-autonomous IFSs is one in which index sets $I^{(j)}$ are chosen from a fixed index set $I$.
In this section, we show that the randomly generated limit set $J$ of such a non-autonomous IFS is totally disconnected if the total IFS is post-critically finite. 
Let us first define a post-critically finite IFS; 
see \cite[Subsection 1.3]{Ki} for the details.

\begin{definition}\label{def:pcf}
	Let $\Phi = \{f_i\}_{i \in I}$ be an autonomous IFS, that is a non-autonomous IFS with $\Phi^{(j)} = \Phi$ for every $j \geq 1$. 
	We define the critical set by $C = \bigcup_{i \neq i' \in I} \left( f_i(J) \cap f_{i'}(J) \right)$. 
	Consider the backward image $\Pi^{-1}(C)$ under the coding map $\Pi  \colon \prod_{j=1}^{\infty} I\to J$, 
	and consider the left-shift $\sigma  \colon \prod_{j=1}^{\infty} I \to \prod_{j=1}^{\infty} I$. 
	We define the post-critical set by $\bigcup_{n=1}^{\infty} \sigma^{n}(\Pi^{-1}(C))$. 
	We say that $\Phi$ is post-critically finite if the post-critical set is finite, 
	and that $\Phi$ is post-critically countable if the post-critical set is at most countable. 
\end{definition}

For instance, 
the Sierpi\'nski gasket, the Koch curve, and the pentakun \cite[Example 3.28]{Sumi09} are post-critically finite, 
but the Sierpi\'nski carpet is not post-critically countable.

Sumi introduced the class of postunbranched IFSs \cite[Definition 3.22]{Sumi09}
to derive the  recursive formula for the rank of the cohomology groups.

\begin{definition}\label{def:pub}
	Let $\Phi = \{f_i\}_{i \in I}$ be an autonomous IFS. 
	Set $C_{i, i'} = f_i(J) \cap f_{i'}(J)$. 
	We say that $\Phi$ is postunbranched 
	if for any $(i,i') \in I^{2}$ such that  $i \neq i'$ and $C_{i, i'} \neq \emptyset$, 
	there exists a unique $\mathfrak{i} \in  \prod_{j=1}^{\infty} I$ such that $f_i^{-1}(C_{i, i'})  = \{\Pi(\mathfrak{i})\}$. 
\end{definition}

There are several examples of IFSs that are post-critically finite but not postunbranched, 
see \cite[Example 2]{BM}. 
However, the following holds. 

\begin{lemma}\label{lem:pub}
	Let $\Phi$ be an autonomous IFS.  
	If it is postunbranched, then it is post-critically countable.
\end{lemma}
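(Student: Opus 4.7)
The plan is to reduce the claim to a counting argument: I would first show that the critical set $C$ itself is finite, then show that $\Pi^{-1}(C)$ is finite, and finally conclude that the post-critical set is a countable union of finite sets.

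First, I would observe that the postunbranched hypothesis forces each non-empty $C_{i,i'}$ to be a single point. Indeed, if $f_i^{-1}(C_{i,i'}) = \{\Pi(\mathfrak{i})\}$ is a singleton and $C_{i,i'} \subset f_i(J)$ is surjected onto by $f_i$ restricted to this pre-image, then $C_{i,i'} = \{f_i(\Pi(\mathfrak{i}))\}$. Since $I$ is finite, there are only finitely many pairs $(i, i')$ with $i \neq i'$, so $C = \bigcup_{i \neq i'} C_{i, i'}$ is a finite set.

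Next, I would count the codings of each $c \in C$. Any coding of $c$ has the form $(i, j_1, j_2, \ldots)$ with $i$ in the finite set $I(c) := \{i \in I : c \in f_i(J)\}$, while the tail $(j_1, j_2, \ldots)$ must be a coding of $f_i^{-1}(c)$. Since $c \in C$, the set $I(c)$ has at least two elements, so for any $i \in I(c)$ I can pick $i' \in I(c) \setminus \{i\}$ and apply the postunbranched condition to $(i,i')$: the set $f_i^{-1}(C_{i,i'})$ is a singleton and its \emph{unique} coding is the prescribed $\mathfrak{i}$. As $c \in C_{i,i'}$, the pre-image $f_i^{-1}(c)$ coincides with this singleton and therefore admits a unique coding. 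Hence $c$ has at most $\#I$ codings, and consequently $\#\Pi^{-1}(C) \leq \#I \cdot \#C < \infty$.

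Finally, since the shift $\sigma$ sends finite sets to finite sets, the post-critical set $\bigcup_{n=1}^{\infty} \sigma^{n}(\Pi^{-1}(C))$ is a countable union of finite sets and hence at most countable, which is precisely the postcritically-countable condition. The only delicate step is the middle one: I need the \emph{uniqueness} clause in Definition \ref{def:pub}, not merely the singleton pre-image clause, to control the tails of the codings. Without uniqueness of $\mathfrak{i}$, nothing would prevent $c$ from having uncountably many codings, and the argument would break down.
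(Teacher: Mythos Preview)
Your proof is correct and follows essentially the same idea as the paper's: both arguments boil down to the observation that for any coding $(i_1,i_2,\ldots)$ of a critical point, the tail $(i_2,i_3,\ldots)$ is forced to be one of the finitely many distinguished codings $\mathfrak{u}_{i,i'}$ supplied by the postunbranched hypothesis, and hence the post-critical set is a countable union of finite sets. The only organizational difference is that you first record explicitly that $C$ is finite and then bound $\#\Pi^{-1}(C)$ directly, whereas the paper works with $\sigma(\Pi^{-1}(C_{i,i'}))$ and shows it lands in the finite set $\{\mathfrak{u}_{v,w}\}_{v\neq w}$; these are two phrasings of the same computation.
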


\begin{proof}
	Let $\Phi = \{f_i\}_{i \in I}$ be a postunbranched IFS. 
	With the notation of definitions above, 
	the critical set is  $C = \bigcup_{i \neq i' \in I} C_{i, i'}$. 
    For each $i$ and $i'$ with $i \neq i'$, let $\mathfrak{u}_{i, i'}$ satisfy $f_i^{-1}(C_{i, i'})=\Pi(\mathfrak{u}_{i, i'}).$ We shall show $\sigma(\Pi^{-1}(C_{i, i'}))\subset \{\mathfrak{u}_{v, w'}\}_{v\neq w'}$ for every $i$ and $i'$.
    
   To prove this, take $\mathfrak{v}\in \sigma(\Pi^{-1}(C_{i, i'})).$ Then for some $v \in I$ we have $v \mathfrak{v}\in \Pi^{-1}(C_{i, i'}).$ Since $\Pi(v \mathfrak{v})\in C_{i, i'}\cap f_{v}(J),$ there exists $w \in \{i, i'\}$ such that $w\neq v$ and $\Pi(v \mathfrak{v})=f_v(\Pi(\mathfrak{v}))\in C_{v, w}.$ Therefore, we have $\Pi( \mathfrak{v})=f_{v}^{-1}(C_{v, w})=\Pi(\mathfrak{u}_{v, w}),$ which implies $\mathfrak{v}=\mathfrak{u}_{v, w}$ by the uniqueness ensured by the postunbranched IFS.  
   Hence, $\bigcup_{n=1}^{\infty} \sigma^{n}(\Pi^{-1}(C_{i, i'}))=\bigcup_{n=0}^{\infty}\sigma^n (\{\mathfrak{u}_{i, i'}\}_{i\neq i'})$ is a countable set for every $i$ and $i'$. 
\end{proof}

\begin{theorem}\label{th:A}
	Let $\{f_i\}_{i \in I}$ be a post-critically countable IFS such that $f_i$ is injective for every $i\in I.$
    Let $\mu$ be a probability measure on the set of all non-empty subsets of $I$, 
	and let $\left(I^{(j)}\right)_{j=1}^\infty$ be an i.i.d.\ sequence with common law $\mu$.
	Suppose that for every $i \in I$, we have 
    $\mu\left(\{I' \subset I \colon i \notin I'\}\right) > 0$. 
	Then, for $\mu^{\otimes\infty}$-almost every sequence,
	the corresponding non-autonomous limit set of $(\{f_i\}_{i \in I^{(j)}})_{j=1}^{\infty}$ is totally disconnected. 
\end{theorem}

\begin{proof}
	Consider the coding map $\Pi  \colon \prod_{j=1}^{\infty} I\to X$ of the autonomous IFS $\{f_i\}_{i \in I}$.
	Let $\mathcal{P}$ be the post-critical set of the autonomous IFS $\{f_i\}_{i \in I}$, which is at most countable by the assumption.
	 Fix $m\in \mathbb N$ and $\mathfrak{i} = (i_m, i_{m+1}, \dots) \in \mathcal{P}$. Then, the probability that $\mathfrak{i} \in \prod_{j=1}^{\infty} I^{(m+j-1)}$ is zero 
	since 
     $\mu\left(\{J \subset I \colon i_{m+j-1} \in J\}\right) < 1$ for every $j \geq 1$. 
	Thus, the probability that $\mathcal{P} \cap \bigcup_{m=1}^{\infty}\prod_{j=1}^{\infty} I^{(m+j-1)} \neq \emptyset$ is zero.

	Suppose now that $\mathcal{P} \cap \bigcup_{m=1}^{\infty}\prod_{j=1}^{\infty} I^{(m+j-1)}= \emptyset$. 
	Since the coding map $\Pi  \colon \prod_{j=1}^{\infty} I^{(j)}\to X$ of the non-autonomous IFS is 
	the restriction of the coding map $\Pi  \colon \prod_{j=1}^{\infty} I\to X$ of the autonomous IFS $\{f_i\}_{i \in I}$, 
	it suffices to show that the restricted $\Pi  \colon \prod_{j=1}^{\infty} I^{(j)}\to X$ is injective.

	Suppose that $\mathfrak{i} \neq \mathfrak{i}' \in  \prod_{j=1}^{\infty} I^{(j)}$ satisfies $\Pi(\mathfrak{i}) = \Pi(\mathfrak{i}')$. 
	Let $\mathfrak{i} = (i_1, i_2, \dots)$ and $\mathfrak{i}' = (i'_1, i'_2, \dots)$. 
	Then there exists $n \geq 1$ such that $i_j=i'_j$ for every $j=1,..., n-1$ and $i_n \neq i'_n$.
	Consider the shifted symbols 
	$\sigma^{n-1}(\mathfrak{i}) = (i_{n}, i_{n+1}, \dots)$ and 
	$\sigma^{n-1}(\mathfrak{i}') = (i'_{n}, i'_{n+1}, \dots)$. 
	Since $\Pi(\mathfrak{i}) = \Pi(\mathfrak{i}')$, by the injectivity of the maps $f_i $ $(i\in I),$ we have $\Pi(\sigma^{n-1}(\mathfrak{i})) = \Pi(\sigma^{n-1}(\mathfrak{i}'))$.
	Then we have $f_{i_n}(\tilde{J}) \cap f_{i'_n}(\tilde{J}) \neq \emptyset$, where $\tilde{J}$ is the limit set of the autonomous IFS $\{f_i\}_{i \in I}$. Hence, $\sigma^n(\mathfrak{i})\in \mathcal P$. This contradicts the fact $\mathcal{P} \cap \bigcup_{m=1}^{\infty}\prod_{j=1}^{\infty} I^{(m+j-1)}= \emptyset$.
	Thus, the  restricted map $\Pi  \colon \prod_{j=1}^{\infty} I^{(j)}\to X$ is injective, 
	and hence the non-autonomous limit set $J$ is totally disconnected. 
\end{proof}

\begin{example}
    Let $p_0, p_1, p_2\in\RR^2$ be the vertices of  an equilateral triangle. 
    Define $g_i : \RR^2\to\RR^2$ by $g_i(x)=(x-p_i)/2 + p_i$ for every $i=0,1,2.$
    The limit set $X$ of the autonomous IFS $\{g_i\}_{i=0,1,2}$ is called the Sierpi\'nski gasket. Fix $n\in\NN$ and consider the $n$th iterates: 
    for every $\mathbf{i} = (i_1, \dots, i_n)\in\{0, 1,2\}^n$, define $f_\mathbf{i} = g_{i_1}\circ\dots\circ g_{i_n}$. 
    Then, the autonomous IFS $\{f_\mathbf{i}\}_{\mathbf{i}\in\{0,1,2\}^n}$ is post-critically finite. 

    Let $I:=\{0,1,2\}^n$ and let $\mathcal{P}_1(I)$ be the set of all subsets $I'$ of $I$ such that $\#(I\setminus I') = 1$. 
    Let $\mu$ be a  probability measure on $\mathcal{P}_1(I)$ which has the same mass on every element. 
    Then, for $\mu^{\otimes\infty}$-almost every $(I^{(j)})_{j=1}^\infty$,
	the limit set $J$ of $(\{f_\mathbf{i}\}_{\mathbf{i} \in I^{(j)}})_{j=1}^{\infty}$ is totally disconnected. 
    We also have that the Hausdorff dimension satisfies $\dim_\mathrm{H}X = \log3/\log2$ and $\dim_\mathrm{H}J = \log(3^n -1)/\log(2^n)$ by \cite[Theorem 5.3]{RU16}. 
\end{example}

In Section \ref{sec:ex}, 
we will consider non-autonomous fractal squares, whose total IFS is not post-critically countable.

\section{Definitions of the simplicial and homological framework}\label{sec:sCpx}
In this section, we define the sequence of simplicial complexes and develop the homological theory in a general form. 
Henceforth, we consider a non-autonomous IFS $(\Phi^{(j)})_{j=1}^{\infty}$  on a compact metric space $X$ 
whose limit set is denoted by $J$ as in Definition \ref{def:NIFS}.  

\begin{definition}
    Let $Y$ be a topological space. 
    For a collection $\mathfrak{U}$ of subsets of $Y$, we denote the nerve by $N(\mathfrak{U})$. 
    Namely, $N(\mathfrak{U})$ is the (abstract) simplicial complex 
    whose simplexes are finite non-empty subsets of $\mathfrak{U}$ with non-empty intersection.
\end{definition}

Recall that the $q$th \v{C}ech homology group is defined as the inverse limit
$\check{H}_q(Y) = \varprojlim_{\mathfrak{U}} H_{q} (N(\mathfrak{U}))$, where $H_{q}(\mathcal{K})$ is the $q$th homology group of a simplicial complex $\mathcal{K}$ and 
where $\mathfrak{U}$ runs over all finite open coverings of $Y$ ordered by refinement. 
See the book \cite{W70} by Wallace for the details. 
One can similarly define the $q$th \v{C}ech homology group $\check{H}_{q}(Y; G)$ with coefficients in any abelian group $G$, 
but we shall mainly restrict ourselves to the integral \v{C}ech homology in this paper.

We will define a new homology group in Definition \ref{def:CechSumi}. 
For this purpose, we need the following lemma, which is a generalization of Hutchinson's theorem. 

\begin{definition}
    For every $j \geq 0$, define a non-autonomous IFS $(\Phi^{(j+k)})_{k=1}^{\infty}$ and denote its limit set by $J_{j}$. 
\end{definition}

\begin{lemma}\label{lem:Huchinson}
    We have $J = J_{0}$, and for every $0 \leq j < k$ we have 
     $$J_{j} = \bigcup_{(i_{j+1}, \dots, i_{k}) \in I^{(j+1)} \times \dots \times  I^{(k)}} 
     f^{(j+1)}_{i_{j+1}} \circ  \dots  \circ f^{(k)}_{i_{k}}(J_{k}).$$
\end{lemma}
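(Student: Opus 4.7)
The identity $J=J_{1}$ is immediate from the definition, since $J_{1}$ is by construction the limit set of the shifted sequence $(\Phi^{(1-1+k)})_{k=1}^{\infty}=(\Phi^{(k)})_{k=1}^{\infty}$. So the substantive content is the decomposition for $j<k$.

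The plan is to first establish a one-step self-similarity identity for the coding maps, then iterate it $k-j$ times. Concretely, write $\Pi_{j}\colon \prod_{\ell=j}^{\infty}I^{(\ell)}\to X$ for the coding map of $J_{j}$, so that
\[
\{\Pi_{j}(i_{j},i_{j+1},\dots)\}
=\bigcap_{\ell\geq 1} f^{(j)}_{i_{j}}\circ f^{(j+1)}_{i_{j+1}}\circ\dots\circ f^{(j+\ell-1)}_{i_{j+\ell-1}}(X).
\]
The first step is to pull $f^{(j)}_{i_{j}}$ out of this intersection. Since $f^{(j)}_{i_{j}}$ is continuous and the sets $f^{(j+1)}_{i_{j+1}}\circ\dots\circ f^{(j+\ell-1)}_{i_{j+\ell-1}}(X)$ form a nested sequence of non-empty compact subsets of the compact $X$, the image of the intersection equals the intersection of the images. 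Reindexing yields the key identity
\[
\Pi_{j}(i_{j},i_{j+1},\dots)=f^{(j)}_{i_{j}}\bigl(\Pi_{j+1}(i_{j+1},i_{j+2},\dots)\bigr).
\]

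Next, I would iterate this identity $k-j$ times to obtain
\[
\Pi_{j}(i_{j},i_{j+1},\dots)=f^{(j)}_{i_{j}}\circ\dots\circ f^{(k-1)}_{i_{k-1}}\bigl(\Pi_{k}(i_{k},i_{k+1},\dots)\bigr),
\]
which is essentially the splitting of an infinite word at position $k$ into a finite prefix of length $k-j$ and an infinite suffix. Finally, I would take images over all sequences: the left-hand side ranges over $J_{j}$, and on the right the prefix $(i_{j},\dots,i_{k-1})$ ranges over $I^{(j)}\times\dots\times I^{(k-1)}$ while the suffix contributes all of $J_{k}$. This gives both inclusions simultaneously and proves the formula.

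There is no real obstacle; the only point that requires a line of justification is the exchange of $f^{(j)}_{i_{j}}$ with the decreasing intersection, which uses continuity of $f^{(j)}_{i_{j}}$ together with compactness of $X$ and the nested structure of the iterates guaranteed by the uniform contraction constant $c<1$ in Definition \ref{def:NIFS}. Once this is noted, the result follows by a direct induction on $k-j$.
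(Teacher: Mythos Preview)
Your proof is correct and follows essentially the same approach as the paper: establish the one-step coding-map identity $\Pi_{j}(i_{j},i_{j+1},\dots)=f^{(j)}_{i_{j}}\bigl(\Pi_{j+1}(i_{j+1},\dots)\bigr)$ and then iterate. The paper phrases the one-step case as two separate inclusions rather than invoking the exchange of $f^{(j)}_{i_{j}}$ with the nested intersection, but the underlying argument is the same.
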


\begin{proof}
    It is trivial that $J = J_{0}$. 
    We show $J_{0} = \bigcup_{i\in I^{(1)}}  f^{(1)}_{i}(J_{1})$.
    Then, by a similar argument, one can show $J_{j} = \bigcup_{i\in I^{(j+1)}}  f^{(j+1)}_{i}(J_{j+1})$ for every $j \geq 1$, and hence the lemma follows. 
    
    Denote by $\Pi \colon \prod_{j=1}^{\infty} I^{(j)}\to X$ and $\Pi' \colon \prod_{j=1}^{\infty} I^{(j+1)}\to X$ 
    the coding maps associated with $(\Phi^{(j)})_{j=1}^{\infty}$ and $(\Phi^{(j+1)})_{j=1}^{\infty}$, respectively. 
    For every $x \in J_{0}$, there exists $(i_{1}, i_{2}, \dots) \in \prod_{j=1}^{\infty} I^{(j)}$ such that $\Pi(i_{1}, i_{2}, \dots) = x$. 
    Let $x' = \Pi'( i_{2}, \dots) \in J_{1}$. 
    By definition, we have $\{\Pi'(i_{2}, \dots) \} = \bigcap_{j=2}^{\infty} f^{(2)}_{i_{2}} \circ \dots  \circ f^{(j)}_{i_{j}} (X)$,  thus $x = f^{(1)}_{i_{1}} (x') \in  f^{(1)}_{i_{1}}(J_{1})$. 
    This shows $J_{0} \subset \bigcup_{i\in I^{(1)}}  f^{(1)}_{i}(J_{1})$.
    Also,  
    for every $x' \in J_{1}$, there exists $(i_{2}, \dots) \in \prod_{j=1}^{\infty} I^{(j+1)}$ such that $\Pi'(i_{2}, \dots) = x'$.
    For every  $i\in I^{(1)}$, by concatenating $i$ and $(i_{2}, \dots)$, we set  $x = \Pi(i, i_{2}, \dots) \in J_{0}$. 
    Then $x = f^{(1)}_{i} (x')$, which implies $J_{0} \supset \bigcup_{i\in I^{(1)}}  f^{(1)}_{i}(J_{1})$. 
    This completes the proof. 
\end{proof}

\begin{definition}\label{def:CechSumi}
    For every $0 \leq j < k$, 
    we define the simplicial complex $\mathcal{N}_{j, k}$ as the nerve of the covering 
    $\{f^{(j+1)}_{i_{j+1}} \circ  \dots  \circ f^{(k)}_{i_{k}}(J_{k})\}_{(i_{j+1}, \dots, i_{k}) \in I^{(j+1)} \times \dots \times  I^{(k)}}$ of $J_j$. 
    Namely, we regard each $v = (i_{j+1}, \dots,  i_{k}) \in I^{(j+1)} \times \dots \times I^{(k)}$ as a vertex of $\mathcal{N}_{j, k}$, 
    and the set $\{v_{0}, v_{1}, \dots, v_{q}\}$ of mutually distinct vertices is a simplex of $\mathcal{N}_{j, k}$ 
    if and only if $\bigcap_{p=0}^{q} f_{v_{p}}(J_{k}) \neq \emptyset$. 
    Here, we define a map by $f_{v} = f^{(j+1)}_{i_{j+1}} \circ  \dots  \circ f^{(k)}_{i_{k}}$ for every $v = (i_{j+1}, \dots,  i_{k})$. 
     
    We define the simplicial map $\phi_{j, k}  \colon \mathcal{N}_{j, k+1} \to \mathcal{N}_{j, k}$ 
    so that $$\phi_{j, k}(i_{j+1}, \dots,  i_{k}, i_{k+1}) = (i_{j+1}, \dots,  i_{k}).$$  
    This simplicial map $\phi = \phi_{j, k}$ depends on $j$ and $k$, but the subscript will be omitted when it is clear from the context.
    
    For every $q \geq 0$, 
    the simplicial map induces a homomorphism 
    $$\phi_{*}  \colon H_{q}(\mathcal{N}_{j, k+1}) \to H_{q}(\mathcal{N}_{j, k})$$
    on the homology groups (with $\ZZ$ coefficients). 
    Consider the inverse limit of the inverse system $\{\phi_{*}  \colon H_{q}(\mathcal{N}_{0, k+1}) \to H_{q}(\mathcal{N}_{0, k})\}_{k = 1}^{\infty}$. 
    We call $\varprojlim_{k}{H}_q(\mathcal{N}_{0, k})$ the $q$th \v{C}ech-Sumi homology group of the non-autonomous IFS $(\Phi^{(j)})_{j=1}^{\infty}$.
\end{definition}

\begin{example}\label{ex:twoGenerator}
	Let $X = [ 0, 1 ]$ be the unit interval. 
	Let $I^{(j)} = \{a, b\}$ for every $j \geq 1$, where $a$ and $b$ are two distinct symbols.
	Define a map $f_{a}^{(j)}  \colon X \to X$ by $f_{a}^{(j)}(x) = 5x/7$ if $j$ is odd, and $f_{a}^{(j)}(x) = 2x/5$ if $j$ is even. 
	Also, define a map $f_{b}^{(j)}  \colon X \to X$ by $f_{b}^{(j)}(x) = 5x/7 + 2/7$ if $j$ is odd, and $f_{b}^{(j)}(x) = 2x/5 + 3/5$ if $j$ is even. 

	Then, for the non-autonomous IFS $(\{f_{a}^{(j)}, f_{b}^{(j)} \})_{j=1}^{\infty}$, 
	the limit sets are $J_j = [ 0, 1 ]$ if $j$ is even, 
	and $J_j = [ 0, 2/5 ] \cup [3/5, 1]$ if $j$ is odd.  
	In addition, the simplicial complexes are 
	\begin{align*}
		\mathcal{N}_{0, 1} = 
		\{
			&\{a\}, \{b\}, 
			\{a, b\}
		\}, \\
		\mathcal{N}_{0, 2} = 
		\{
			&\{(a, a)\}, \{(a, b)\}, \{(b, a)\}, \{(b, b)\}, \\
			&\{(a, a), (b, a)\}, \{(b, a), (a, b)\}, \{(a, b), (b, b)\}  
		\},\\
		\mathcal{N}_{1, 2} = 
		\{
			&\{a\}, \{b\}
		\}, \\
		\mathcal{N}_{1, 3} = 
		\{
			&\{(a, a)\}, \{(a, b)\}, \{(b, a)\}, \{(b, b)\}, \\
			&\{(a, a), (a, b)\}, \{(b, a), (b, b)\} 
		\},
	\end{align*} 
	and so on. 
	See the figures below.  

	\vspace{0pt}
	\begin{tikzpicture}
	\draw (0,0)--(1,0);
	\node (O) at (0,0) [left=12pt] {$\mathcal{N}_{0, 1}$};
	\coordinate (1) at (0,0) node at (1) [below] {$a$\phantom{b}};
	\coordinate (2) at (1,0) node at (2) [below] {$b$};
	\fill (1) circle (2pt) (2) circle (2pt);
	\end{tikzpicture}

	\vspace{10pt}
	\begin{tikzpicture}
	\draw (0,0)--(2,-1)--(4,0)--(6,-1);
	\node (O) at (0,0) [left= 24pt, below=12pt] {$\mathcal{N}_{0, 2}$};
	\coordinate (0) at (0,0) node at (0) [above] {$(a, a)$};
	\coordinate (1) at (2,-1) node at (1) [below] {$(b, a)$};
	\coordinate (2) at (4,0) node at (2) [above] {$(a, b)$};
	\coordinate (3) at (6,-1) node at (3) [below] {$(b, b)$};
	\fill (0) circle (2pt) (1) circle (2pt) (2) circle (2pt) (3) circle (2pt);
	\end{tikzpicture}

	\vspace{0pt}
	\begin{tikzpicture}
	\node (O) at (0,0) [left=12pt] {$\mathcal{N}_{1, 2}$};
	\coordinate (1) at (0,0) node at (1) [below] {$a\phantom{b}$};
	\coordinate (2) at (1,0) node at (2) [below] {$b$};
	\fill (1) circle (2pt) (2) circle (2pt);
	\end{tikzpicture}

	\vspace{0pt}
	\begin{tikzpicture}
	\draw (0,0)--(2,0) (4,0)--(6,0);
	\node (O) at (0,0) [left=24pt] {$\mathcal{N}_{1, 3}$};
	\coordinate (0) at (0,0) node at (0) [below] {$(a, a)$};
	\coordinate (1) at (2,0) node at (1) [below] {$(a, b)$};
	\coordinate (2) at (4,0) node at (2) [below] {$(b, a)$};
	\coordinate (3) at (6,0) node at (3) [below] {$(b, b)$};
	\fill (0) circle (2pt) (1) circle (2pt) (2) circle (2pt) (3) circle (2pt);
	\end{tikzpicture}
\end{example}

We now show one of our main results (Theorem \ref{res:CechSumi}). 
This is a generalization of \cite[Remark 2.42]{Sumi09}. 

\begin{theorem}
\label{th:cechsumi}
    There is an isomorphism between 
    the \v{C}ech-Sumi homology group $\varprojlim_{k}{H}_q(\mathcal{N}_{0, k})$ and 
    the \v{C}ech homology group $\check{H}_q(J)$ for every $q \geq 0$. 
\end{theorem}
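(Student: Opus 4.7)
The plan is to realize the sequence $\{\mathcal{N}_{1,k}\}_{k \geq 2}$ as the nerves of a \emph{cofinal} sequence of finite open covers of $J$; the theorem then follows from the cofinality property of inverse limits together with the definition of \v{C}ech homology given in the text.

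First, I would control the mesh. The uniform contraction bound gives $\diam(f_v(J_k)) \leq c^{k-1} \diam(X)$, and by Lemma \ref{lem:Huchinson} the family $\mathcal{F}_k := \{f_v(J_k)\}_v$ is a closed cover of $J$, so its mesh tends to zero as $k \to \infty$. Next, I would thicken each $F_v := f_v(J_k)$ to an open subset $U_v^k := \{x \in J : \dist(x, F_v) < \epsilon_k\}$ of $J$, with $\epsilon_k$ chosen small enough that the nerve of $\mathfrak{U}_k := \{U_v^k\}_v$ coincides with $\mathcal{N}_{1,k}$. The delicate direction is: for any ``non-simplex'' subfamily $\{F_{v_0}, \ldots, F_{v_q}\}$ with $\bigcap_p F_{v_p} = \emptyset$, the thickened sets should still have empty intersection. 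Since there are only finitely many such subfamilies and $J$ is compact, the continuous function $x \mapsto \max_p \dist(x, F_{v_p})$ attains a positive minimum on $J$ for each non-simplex; taking $\epsilon_k$ below all these minima works. I would further require the $\epsilon_k$ to be decreasing in $k$; combined with the inclusion $f_{(i_1,\ldots,i_k)}(J_{k+1}) \subset f_{(i_1,\ldots,i_{k-1})}(J_k)$ coming from Lemma \ref{lem:Huchinson}, this forces $U_{(i_1,\ldots,i_k)}^{k+1} \subset U_{(i_1,\ldots,i_{k-1})}^{k}$, so that $\phi_{1,k}$ is literally a refinement map in the standard \v{C}ech sense.

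For cofinality: given any finite open cover $\mathfrak{V}$ of $J$, the Lebesgue number lemma provides $\delta > 0$ such that every subset of $J$ of diameter less than $\delta$ lies in some element of $\mathfrak{V}$. Since $\diam(U_v^k) \leq c^{k-1}\diam(X) + 2\epsilon_k \to 0$, for all sufficiently large $k$ the cover $\mathfrak{U}_k$ refines $\mathfrak{V}$, so $\{\mathfrak{U}_k\}_k$ is cofinal in the directed set of finite open covers of $J$ ordered by refinement. Any two refinement-induced simplicial maps between nerves induce the same map on homology, so the bonding maps along our cofinal subsystem are canonical and, by construction, agree with $(\phi_{1,k})_\ast$. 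By cofinality of inverse limits we conclude
$$\varprojlim_{k} H_q(\mathcal{N}_{1,k}) \;=\; \varprojlim_{k} H_q(N(\mathfrak{U}_k)) \;\cong\; \varprojlim_{\mathfrak{U}} H_q(N(\mathfrak{U})) \;=\; \check{H}_q(J).$$

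The principal obstacle is the nerve-preserving thickening in the second step: choosing the $\epsilon_k$ so that, uniformly across levels $k$ and simultaneously compatibly with the refinement $\phi_{1,k}$, the open nerve of $\mathfrak{U}_k$ matches $\mathcal{N}_{1,k}$. Once this combinatorial identification is secured, the remainder is a clean compactness-and-cofinality argument, and the identification of $\phi_{1,k}$ with the refinement-induced simplicial map is immediate from the vertex description in Definition \ref{def:CechSumi}. For the dual statement in Remark \ref{rem:CohomCoeff} (and for general coefficients $G$), the same scheme applies verbatim with inverse limits replaced by direct limits in the cohomological case.
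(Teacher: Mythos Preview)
Your proposal is correct and follows essentially the same route as the paper: thicken the closed covers $\{f_v(J_k)\}_v$ to open covers $\mathfrak{U}_k$ of $J$ with the same nerve, arrange that $\mathfrak{U}_{k+1}$ refines $\mathfrak{U}_k$ via $\phi_{1,k}$, and then use the Lebesgue number lemma to show cofinality among all finite open covers of $J$. You are in fact slightly more explicit than the paper both in justifying why a suitable $\epsilon_k$ exists (finitely many non-simplices plus compactness) and in identifying the induced bonding maps with $(\phi_{1,k})_\ast$; these are exactly the points the paper passes over with ``Since $X$ is compact, there exists small $\delta_k>0$ such that\ldots''.
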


\begin{proof}
	For every $k \geq 1$, 
	denote by $V(\mathcal{N}_{0, k})$ the set of all vertices of a simplicial complex $\mathcal{N}_{0, k}$.  
	Then we have 
	$$J = \bigcup_{v \in V(\mathcal{N}_{0, k})} f_v(J_{k})$$
	by Lemma \ref{lem:Huchinson}.
	For every set $f_v(J_{k})$, denote its open $\delta$-neighborhood by
	$N_{\delta}(f_v(J_{k}))$. 
	
	Since $X$ is compact, 
	there exists a small $\delta_k > 0$ such that the nerve of $\mathfrak{U}_k = \{J \cap  N_{\delta_k}(f_v(J_{k}))\}_{v \in V(\mathcal{N}_{0, k})}$ is identical to $\mathcal{N}_{0, k}$. 
	Then $\mathfrak{U}_k$ is a finite open covering of $J$. 
	We can choose $\delta_{k+1}$ so that the open covering $\mathfrak{U}_{k+1}$ is a refinement of $\mathfrak{U}_{k}$. 
	Since the refinement map agrees with the simplicial map $\phi_{0, k}$, we have $\varprojlim_{k}{H}_q(\mathcal{N}_{0, k}) = \varprojlim_{k}{H}_q(N(\mathfrak{U}_k))$. 
    Without loss of generality, we may assume $\delta_k \to 0$ as $k\to\infty$. 

	What we need to show is that $\{\mathfrak{U}_{k}\}_{k=1}^{\infty}$ is cofinal with respect to refinement. 
	Let $\mathfrak{U}$ be an arbitrary finite open covering of $J$. 
	By the Lebesgue covering lemma,
	there exists $\epsilon > 0$ such that every subset $A$ of $J$ with diameter less than $\epsilon$ is contained in some $U \in \mathfrak{U}$. 
	Since the Lipschitz constants are bounded above by $c < 1$,  
	we have $\diam(f_{v}(J_{k})) \leq c^{k} \diam(J_k) \leq c^{k} \diam(X)$ for every $v \in V(\mathcal{N}_{0, k})$. 
    Since $\delta_k \to 0$, we also have $\diam\big(J \cap  N_{\delta_k}(f_v(J_{k}))\big) \to 0$ as $k\to\infty$. 
	Thus, there exists $k_0 \geq 1$ such that for every $A \in \mathfrak{U}_{k_0}$, we have $\diam(A) < \epsilon$.
	This shows that $\mathfrak{U}_{k_0}$ is a refinement of $\mathfrak{U}$, which completes the proof of the theorem.
\end{proof}

Note that the \v{C}ech homology group $\check{H}_q(X)$ is isomorphic to the singular homology group ${H}_q(X)$ if the topological space $X$ is homeomorphic to the geometric realization of some simplicial complex.  
However, these two groups are not isomorphic in general. 
See Section 6-6 of Wallace \cite{W70}. 

\begin{rem}\label{rem:CohomCoeff}
    Dually, we can define the \v{C}ech-Sumi cohomology as the direct limit of the cohomology groups. 
    Then, there is an isomorphism 
    $\varinjlim {H}^q(\mathcal{N}_{0, k}) \cong \check{H}^q(J),$
    where the right-hand side is the \v{C}ech cohomology group. 
    Also, in general, we can define the \v{C}ech-Sumi (co)homology groups with coefficients in any abelian group. 
    The proof shows that the \v{C}ech-Sumi homology and cohomology are isomorphic to the \v{C}ech homology and cohomology, respectively, with coefficients in any abelian group.
\end{rem}

\section{Connectedness, total disconnectedness, and  local connectedness}\label{sec:conn}
Using the nerves, we determine the (path-)connected components of the limit set $J$, 
and give a sufficient condition for $J$ to be totally disconnected. 

\begin{definition}\label{def:component}
	For a simplicial complex $\mathcal{K}$, denote by $V(\mathcal{K})$ the set of all vertices.  
	Define the equivalence relation $\sim$ on $V(\mathcal{K})$ generated by declaring $u \sim v$ whenever $\{u, v\}$ is a $1$-simplex of $\mathcal{K}$. 

	With the notation above, let $[v] := \{ u \in V(\mathcal{K}) \colon u \sim v\}$ for  $v \in V(\mathcal{K})$.
	For every $v \in V(\mathcal{K})$, we define a subcomplex by $\mathcal{K}_{v} =\{ s \in \mathcal{K} \colon s \subset [v]\}$, 
	which is called a component of $\mathcal{K}$. 
	We denote by $\Con(\mathcal{K})$ the set of all components of $\mathcal{K}$. 
	A simplicial complex $\mathcal{K}$ is said to be connected if $\# \Con(\mathcal{K}) = 1$. 
\end{definition}

For example, consider the nerve $\mathcal{N}_{j, k}$  ($0 \leq j < k$) for a non-autonomous IFS as in Definition \ref{def:CechSumi}. 
Then $V(\mathcal{N}_{j,k}) = I^{(j+1)} \times \dots \times  I^{(k)}$. 
For $u, v \in V(\mathcal{N}_{j,k})$, we have $u \sim v$ in $\mathcal{N}_{j,k}$ if there exist $n \in \NN$ and $v_{0}, v_{1}, \dots, v_{n} \in V(\mathcal{N}_{j,k})$ such that 
$u=v_{0}$, $v=v_{n}$, and $f_{v_{i}}(J_{k}) \cap f_{v_{i+1}}(J_{k}) \neq \emptyset$ for every $i = 0, \dots, n-1$. 

Our definition of component is consistent with that used in Spanier's book \cite[p.138]{Spa}, 
though the notation of the relation $\sim$ is different.  
As shown in the book, for a simplicial complex $\mathcal{K}$, 
if $\mathcal{C} \in \Con(\mathcal{K})$, then the geometric realization $|\mathcal{C}|$ is a path-connected component of $|\mathcal{K}|$. 

\begin{lemma}\label{lem:compIsMapedToComp}
	Let $\mathcal{K}$ and $\mathcal{L}$ be simplicial complexes, and let $\psi  \colon \mathcal{K} \to \mathcal{L}$ be a simplicial map. 
	If $u, v \in V(\mathcal{K})$ satisfy $u \sim v$ in $\mathcal{K}$, 
    then $\psi(u) \sim \psi(v)$ in $\mathcal{L}$. 
\end{lemma}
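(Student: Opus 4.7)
The plan is to peel back the definition of the equivalence relation $\sim$ and to exploit the single defining property of a simplicial map: that it carries every simplex of the source to a simplex of the target. Since $\sim$ is declared to be the equivalence relation on $V(\mathcal{K})$ \emph{generated} by the relation ``$\{u,v\}$ is a $1$-simplex of $\mathcal{K}$'', the hypothesis $u \sim v$ unpacks to the existence of a finite chain $u = v_0, v_1, \dots, v_n = v$ of vertices such that, for every $i = 0, 1, \dots, n-1$, the pair $\{v_i, v_{i+1}\}$ is a $1$-simplex of $\mathcal{K}$. (The reflexive case $u = v$ fits in as $n = 0$ and is trivial.)

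I would then reduce the problem to a single edge: because $\sim$ on $V(\mathcal{L})$ is transitive, it will suffice to prove $\psi(v_i) \sim \psi(v_{i+1})$ in $\mathcal{L}$ for each $i$ and then concatenate. For a fixed $i$, the simplicial-map property applied to the simplex $\{v_i, v_{i+1}\}$ of $\mathcal{K}$ yields that $\{\psi(v_i), \psi(v_{i+1})\}$ is a simplex of $\mathcal{L}$. Here one must be slightly careful because a simplicial map need not be injective on vertices, so I would split into the obvious two cases. If $\psi(v_i) = \psi(v_{i+1})$, then $\psi(v_i) \sim \psi(v_{i+1})$ by reflexivity of the equivalence relation. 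Otherwise $\{\psi(v_i), \psi(v_{i+1})\}$ is a genuine $1$-simplex of $\mathcal{L}$, and the relation holds directly from the generating clause defining $\sim$.

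There is no serious obstacle here; the statement is essentially a bookkeeping check. The only pitfall worth highlighting explicitly is the degenerate case where $\psi$ collapses an edge onto a single vertex, which is why I would record it as a separate sub-case rather than silently absorbing it into the generating relation. No induction beyond the finite-chain concatenation, and no topological input, is required.
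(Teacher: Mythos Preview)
Your proof is correct and mirrors the paper's own argument: the paper observes that $\psi$ sends each $1$-simplex $\{v_0,v_1\}$ of $\mathcal{K}$ either to a $1$-simplex of $\mathcal{L}$ or to a single vertex, and then concludes directly that $u\sim v$ implies $\psi(u)\sim\psi(v)$. Your version simply spells out the implicit chain and the degenerate sub-case more carefully.
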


\begin{proof}
	For every $1$-simplex $s = \{v_0, v_1\} \in \mathcal{K}$, the map $\psi$ maps $s$ either to a $1$-simplex or to the $0$-simplex $\{\psi(v_0)\} = \{\psi(v_1)\}$. 
	Therefore, if $u \sim v$, then $\psi(u) \sim \psi(v)$. 
\end{proof}

Consider now the nerves $\mathcal{N}_{j, k}$, $\mathcal{N}_{j, k+1}$ and the simplicial map $\phi$ defined in Definition \ref{def:CechSumi}. 

\begin{lemma}\label{lem:simplicialSurjective}
	The simplicial map $\phi  \colon \mathcal{N}_{j,k+1} \to \mathcal{N}_{j,k}$ is surjective. 
    More precisely, for every $q \geq 0$ and $q$-simplex $s \in \mathcal{N}_{j,k}$, 
    there exists $q$-simplex $s' \in \mathcal{N}_{j,k+1}$ such that $\phi(s') = s$.  
\end{lemma}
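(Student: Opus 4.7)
The plan is to lift a given $q$-simplex $s=\{v_{0},\dots,v_{q}\}\in\mathcal{N}_{j,k}$ to a $q$-simplex $s'\in\mathcal{N}_{j,k+1}$ by appending an extra index from $I^{(k)}$ to each vertex $v_{p}$, chosen so that a single witness of nonempty intersection is preserved. Since $\phi$ forgets the last coordinate, such an $s'$ will automatically satisfy $\phi(s')=s$, so surjectivity on simplices of every dimension (hence on the whole complex) follows at once. The engine of the argument is Lemma \ref{lem:Huchinson}, which supplies the self-similar refinement of $J_{k}$ by the $J_{k+1}$-pieces.

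First I would pick a witness $x \in \bigcap_{p=0}^{q} f_{v_{p}}(J_{k})$, which exists because $s$ is a simplex of $\mathcal{N}_{j,k}$. Applying Lemma \ref{lem:Huchinson} at level $k$, one has $J_{k}=\bigcup_{i\in I^{(k)}} f^{(k)}_{i}(J_{k+1})$, and composing with $f_{v_{p}}$ yields
\[
f_{v_{p}}(J_{k})=\bigcup_{i\in I^{(k)}} f_{(v_{p},\,i)}(J_{k+1}),
\]
where $(v_{p},i)$ denotes the concatenated tuple in $I^{(j)}\times\cdots\times I^{(k)}$. Hence for each $p$ I can select $i^{(p)}\in I^{(k)}$ with $x\in f_{(v_{p},\,i^{(p)})}(J_{k+1})$. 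Set $v'_{p}=(v_{p},i^{(p)})$; then $x\in\bigcap_{p=0}^{q} f_{v'_{p}}(J_{k+1})$, so this intersection is nonempty.

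It remains to check that the $v'_{p}$ are pairwise distinct, so that $s'=\{v'_{0},\dots,v'_{q}\}$ is genuinely a $q$-simplex of $\mathcal{N}_{j,k+1}$. This is immediate from $\phi(v'_{p})=v_{p}$: if $v'_{p}=v'_{p'}$ then $v_{p}=v_{p'}$, contradicting that $s$ has $q+1$ distinct vertices. Thus $s'$ is a $q$-simplex with $\phi(s')=s$, proving the lemma. I don't expect a real obstacle here; the statement is essentially a direct translation of the self-similar decomposition of $J_{k}$ into the combinatorial structure of the nerves, and no dimension-dependent subtleties arise.
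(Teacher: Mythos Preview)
Your proof is correct and follows essentially the same approach as the paper's: both pick a common point of the intersection $\bigcap_{p} f_{v_p}(J_k)$, refine each piece via $J_k=\bigcup_{i\in I^{(k)}} f_i^{(k)}(J_{k+1})$ to choose an index $i_p\in I^{(k)}$ for each vertex, and observe that the lifted vertices remain distinct because $\phi$ recovers the original $v_p$. If anything, your version is slightly cleaner in naming the witness $x$ explicitly.
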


\begin{proof}
	For every $q$-simplex $\{v_0, v_1, \dots, v_q\} \in \mathcal{N}_{j, k}$, 
	we have $\cap_{p=0}^{q} f_{v_p}(J_{k}) \neq \emptyset$ by definition. 
	It follows from $J_{k} = \bigcup_{i \in I^{(k+1)}} f_i^{(k+1)}(J_{k+1})$ that 
	$$\bigcap_{p=0}^{q} \bigcup_{i\in I^{(k+1)}}f_{v_p} \circ f_{i}(J_{k+1}) \neq \emptyset.$$ 
	Therefore, for every $p =0, \dots, q$, there exists $i_p \in I^{(k+1)}$ such that 
	$$\bigcap_{p=0}^q f_{v_p i_p}(J_{k+1}) \neq \emptyset.$$ 
	This shows that $s' = \{v_0i_0, \dots, v_qi_q\} \in \mathcal{N}_{j, k+1}$ and hence $\phi$ is surjective. 
	Note that $s'$ is a $q$-simplex since the concatenated symbols $v_0i_0, \dots, v_qi_q$ are mutually distinct. 
\end{proof}

\begin{lemma}\label{lem:ComponentsSurjective}
	The simplicial map $\phi  \colon \mathcal{N}_{j,k+1} \to \mathcal{N}_{j,k}$ induces a map $\phi_*  \colon \Con(\mathcal{N}_{j,k+1}) \to \Con(\mathcal{N}_{j,k})$, 
	and this is surjective. 
\end{lemma}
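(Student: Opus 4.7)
The plan is to construct $\phi_*$ vertex-wise and then verify the two parts of the statement in turn. First I would define $\phi_*(\mathcal{C})$, for a component $\mathcal{C} \in \Con(\mathcal{N}_{j,k+1})$, to be the component $[\phi(v)]$ of $\mathcal{N}_{j,k}$ containing the image of any chosen vertex $v \in V(\mathcal{C})$. To check that this is independent of the choice of representative, I would take a second vertex $u \in V(\mathcal{C})$, note that $u \sim v$ by Definition \ref{def:component}, and apply Lemma \ref{lem:compIsMapedToComp} to conclude $\phi(u) \sim \phi(v)$, so that $[\phi(u)] = [\phi(v)]$. This gives a well-defined map $\phi_* \colon \Con(\mathcal{N}_{j,k+1}) \to \Con(\mathcal{N}_{j,k})$.

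For surjectivity, the key input is the $q = 0$ case of Lemma \ref{lem:simplicialSurjective}: every vertex of $\mathcal{N}_{j,k}$ is the image under $\phi$ of some vertex of $\mathcal{N}_{j,k+1}$. Given $\mathcal{D} \in \Con(\mathcal{N}_{j,k})$, I would pick any $v \in V(\mathcal{D})$, lift it to some $v' \in V(\mathcal{N}_{j,k+1})$ with $\phi(v') = v$, and then observe that the component of $v'$ is sent by $\phi_*$ to $[\phi(v')] = [v] = \mathcal{D}$, which establishes the surjectivity claim.

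I do not anticipate any genuine obstacle: both assertions follow immediately from the two preceding lemmas. The only substantive point is the well-definedness of $\phi_*$ on equivalence classes, and this is precisely the content of Lemma \ref{lem:compIsMapedToComp}; surjectivity on components then reduces to surjectivity on vertices, already provided by Lemma \ref{lem:simplicialSurjective}. The resulting proof should be short, essentially of the length of the two paragraphs above.
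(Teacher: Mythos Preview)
Your proposal is correct and matches the paper's own proof essentially line for line: the paper also defines $\phi_*(\mathcal{K}_v) = \mathcal{K}_{\phi(v)}$, checks well-definedness via Lemma~\ref{lem:compIsMapedToComp}, and then cites Lemma~\ref{lem:simplicialSurjective} for surjectivity. The only difference is notational (the paper writes $\mathcal{K}_{\phi(v)}$ where you write $[\phi(v)]$).
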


\begin{proof}
	Every component of $\mathcal{N}_{j,k+1}$ is of the form $\mathcal{K}_{v} =\{ s \in \mathcal{N}_{j,k+1} \colon s \subset [v]\}$ for some $v \in V(\mathcal{N}_{j,k+1})$. 
	If $u \in V(\mathcal{N}_{j,k+1})$ satisfies $u \sim v$, then $\mathcal{K}_{u} = \mathcal{K}_{v}$ since the equivalence classes are the same $[u]=[v]$. 
	It follows from Lemma \ref{lem:compIsMapedToComp} that $\mathcal{K}_{\phi(u)} = \mathcal{K}_{\phi(v)}$. 
	This shows that $\phi$ induces a map $\phi_*  \colon \Con(\mathcal{N}_{j,k+1}) \to \Con(\mathcal{N}_{j,k})$ so that $\phi_*(\mathcal{K}_{v}) = \mathcal{K}_{\phi(v)}$.
	The surjectivity is due to Lemma \ref{lem:simplicialSurjective}.
\end{proof}

\begin{definition}\label{def:componentTop}
	For a topological space $Y$, we denote by $\Con(Y)$ the set of all connected components of $Y$. 
    Also, we denote by $\pCon(Y)$ the set of all path-connected components of $Y$. 
\end{definition}

The former half of Lemma \ref{lem:ComponentsSurjective} implies that 
the sequence of induced maps $\{\phi_{*}  \colon \Con(\mathcal{N}_{0,k+1}) \to \Con(\mathcal{N}_{0,k})\}_{k > 0}$ forms an inverse system of sets. 
Taking the inverse limit, we derive the following isomorphisms of sets. 

\begin{theorem}\label{th:ConToInverselimit}
	There is a bijection $\Con(J) \to \varprojlim_{k} \Con(\mathcal{N}_{0,k})$. 
	More explicitly, each $C \in \Con(J)$ 
	is mapped to $\mathcal{C}_{k} \in \Con(\mathcal{N}_{0,k})$ such that 
	$$C \subset \bigcup_{v \in V(\mathcal{C}_{k})} f_{v}(J_{k})$$ for every $k > 0.$
\end{theorem}

\begin{proof}    
	For every $C \in \Con(J)$ and for every $k > 0$, 
	we have $C \subset J  = \bigcup_{v \in V(\mathcal{N}_{0,k})} f_{v}(J_{k})$ 
	by Lemma \ref{lem:Huchinson}. 
	Define $V' = \{v \in V(\mathcal{N}_{0,k}) \colon C\cap f_{v}(J_{k}) \neq \emptyset \}$, 
	then $C \subset  \bigcup_{v \in V'} f_{v}(J_{k})$. 
	For every $u, v \in V'$, we have $u \sim v$ since $C$ is connected. 
	Define $\mathcal{C}_{k} \in \Con(\mathcal{N}_{0,k})$ as the component defined by $V'$. 
	Note that if $u \notin V(\mathcal{C}_{k})$, then $C\cap f_{u}(J_{k}) = \emptyset$ by the construction. 

	For $\mathcal{C}_{k} \in \Con(\mathcal{N}_{0,k})$ and $\mathcal{C}_{k+1} \in \Con(\mathcal{N}_{0,k+1})$ defined as above, 
	we show $\phi_*(\mathcal{C}_{k+1}) = \mathcal{C}_{k}$. 
	For $\tilde{v} \in V(\mathcal{C}_{k+1})$, we have $C\cap f_{\tilde{v}}(J_{k+1}) \neq \emptyset$. 
	It follows from Lemma \ref{lem:Huchinson} that $J_{k} =  \bigcup_{i \in I^{(k+1)}} f^{(k+1)}_{i}(J_{k+1})$. 
	Thus, $C\cap f_{\phi(\tilde{v})}(J_{k}) \supset C\cap f_{\tilde{v}}(J_{k+1})\neq \emptyset$. 
	This implies that $\phi(\tilde{v}) \in V(\mathcal{C}_{k})$, and hence $\phi_*(\mathcal{C}_{k+1}) = \mathcal{C}_{k}$. 

	By the universality of inverse limit, 
	the maps $\Con(J) \to  \Con(\mathcal{N}_{0,k})$ induce the unique map $\Con(J) \to  \varprojlim\Con(\mathcal{N}_{0,k})$. 
	We now show that this is injective and surjective. 

	Suppose that $C, C' \in \Con(J)$ satisfy $C \neq C'$. 
    Suppose that $C$ and $C'$ are mapped to $\mathcal{C}_{k}, \mathcal{C}'_{k} \in \Con(\mathcal{N}_{0,k})$ respectively for each $k > 0$. 
	By the construction, 
	$V(\mathcal{C}_{k}) = \{v \in V(\mathcal{N}_{0,k}) \colon C\cap f_{v}(J_{k}) \neq \emptyset \}$. 
	Since $C$ and $C'$ are disjoint compact
    subsets of the metric space $(X, d_X)$, 
	we have 
	$$d_X(C, C') = \min\{d_X(x, x')\colon x \in C, x' \in C'\} > 0.$$ 
	Take a large $k > 0$ so that $c^{k}\diam(X) < d_X(C, C')$, 
	where $c < 1$ is the uniform upper bound of the Lipschitz constants as in Definition \ref{def:NIFS}. 
	Then for every $v \in V(\mathcal{N}_{0,k})$, we have 
	$$\diam(f_{v}(J_{k})) \leq \Lip(f_{v})\diam(J_{k})\leq c^{k}\diam(X) < d_X(C, C').$$
	For this $k$, there does not exist $v \in V(\mathcal{N}_{0,k})$ 
	such that $C\cap f_{v}(J_{k}) \neq \emptyset$ and $C'\cap f_{v}(J_{k}) \neq \emptyset$. 
	Thus,  $V(\mathcal{C}_{k})\cap V(\mathcal{C}'_{k}) = \emptyset$, and hence $\mathcal{C}_{k} \neq \mathcal{C}'_{k}$. 
	This shows that $\Con(J) \to  \varprojlim\Con(\mathcal{N}_{0, k})$ is injective. 

	Fix $(\mathcal{K}_{k})_{k > 0}$ such that $\phi_*(\mathcal{K}_{k+1}) = \mathcal{K}_{k}$ and $\mathcal{K}_{k} \in \Con(\mathcal{N}_{0, k})$ for every $k > 0$. 
	Take $v_{1} \in V(\mathcal{K}_{1})$.
	Then, by Lemma~\ref{lem:simplicialSurjective}, there exists  $v_{k+1} \in V(\mathcal{K}_{k+1})$ such that $\phi(v_{k+1}) = v_{k}$ for every $k > 0$. 
	Since $V(\mathcal{N}_{0, k}) = \prod_{j=1}^{k} I^{(j)}$, 
	the sequence $(v_{k})_{k > 0}$ defines an element $v_{\infty} \in \prod_{j=1}^{\infty} I^{(j)}$. 
	Then, through the coding map $\Pi  \colon \prod_{j=1}^{\infty} I^{(j)} \to X$, we get a point $\Pi(v_{\infty}) \in J$. 
	Let $C$ be the connected component which contains $\Pi(v_{\infty})$. 
	For every $k > 0$, by the definition of the coding map, we have  $\Pi(v_{\infty}) \in   f_{v_{k}}(J_{k}) \subset \bigcup_{v \in V(\mathcal{K}_{k})} f_{v}(J_{k})$. 
	In general, we have $J = \bigcup_{v \in V(\mathcal{N}_{0, k})} f_{v}(J_{k})$, 
	and since  $C$ is a connected subset of $J$, we have $C \subset \bigcup_{v \in V(\mathcal{K}_{k})} f_{v}(J_{k})$. 
	This shows that $\Con(J) \to  \varprojlim\Con(\mathcal{N}_{0, k})$ is surjective, which completes the proof. 
\end{proof}

An application of Theorem~\ref{th:ConToInverselimit} is related to total disconnectedness. 
The following Corollary \ref{cor:totDisc} is one of our main theorems. 

\begin{theorem}\label{th:D}
	Let $c < 1$ be the uniform upper bound on the Lipschitz constants. 
	Fix $C \in \Con(J)$, and let $\mathcal{C}_{k} \in \Con(\mathcal{N}_{0, k})$ as defined in Theorem \ref{th:ConToInverselimit}. 
	If $\lim_{{k \to \infty}} c^{k} \#V(\mathcal{C}_{k}) = 0$, then $\diam(C) = 0$. 
\end{theorem}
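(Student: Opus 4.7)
The plan is to bound $\diam(C)$ uniformly in $k$ by exploiting the hypothesis that $\mathcal{C}_k$ is a \emph{connected} subcomplex with few vertices compared to $c^{-(k-1)}$. By Theorem \ref{th:C} we already know the containment
\[
C \subset \bigcup_{v \in V(\mathcal{C}_k)} f_v(J_k),
\]
so each point of $C$ lies in some piece $f_v(J_k)$ of diameter at most $c^{k-1}\diam(X)$. The idea is then to chain two arbitrary points of $C$ through these pieces using connectedness of $\mathcal{C}_k$.

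Concretely, I would proceed as follows. Fix $k>1$ and arbitrary $x,y \in C$. Pick $v_0, v_n \in V(\mathcal{C}_k)$ with $x \in f_{v_0}(J_k)$ and $y \in f_{v_n}(J_k)$. Since $\mathcal{C}_k$ is a single component, the equivalence $v_0 \sim v_n$ (Definition \ref{def:component}) yields a sequence $v_0 = u_0, u_1, \dots, u_n = v_n$ of vertices in $V(\mathcal{C}_k)$ such that $\{u_i, u_{i+1}\}$ is a $1$-simplex of $\mathcal{C}_k$, i.e.\ $f_{u_i}(J_k)\cap f_{u_{i+1}}(J_k) \neq \emptyset$ for each $i$. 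By deleting loops we may assume the $u_i$ are pairwise distinct, so $n+1 \leq \#V(\mathcal{C}_k)$.

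Next, choose $z_i \in f_{u_i}(J_k)\cap f_{u_{i+1}}(J_k)$ for $i=0,\dots,n-1$. Then $x$ and $z_0$ both lie in $f_{u_0}(J_k)$; $z_{i-1}$ and $z_i$ both lie in $f_{u_i}(J_k)$; and $z_{n-1}$ and $y$ both lie in $f_{u_n}(J_k)$. Since $f_v = f^{(1)}_{i_1}\circ\dots\circ f^{(k-1)}_{i_{k-1}}$ has $\Lip(f_v)\leq c^{k-1}$, each $f_{u_i}(J_k)$ has diameter at most $c^{k-1}\diam(X)$. The triangle inequality therefore gives
\[
d_X(x,y) \leq d_X(x,z_0) + \sum_{i=1}^{n-1} d_X(z_{i-1}, z_i) + d_X(z_{n-1}, y) \leq (n+1)\, c^{k-1}\diam(X) \leq \#V(\mathcal{C}_k)\, c^{k-1}\diam(X).
\]

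Since $x,y \in C$ were arbitrary, $\diam(C) \leq \#V(\mathcal{C}_k)\, c^{k-1}\diam(X)$ for every $k>1$. Letting $k\to\infty$ and using the hypothesis $c^{k-1}\#V(\mathcal{C}_k) \to 0$ yields $\diam(C) = 0$. The argument is essentially routine once Theorem \ref{th:C} supplies the containment and connectedness of $\mathcal{C}_k$; the only mildly delicate point is reducing the chain length to at most $\#V(\mathcal{C}_k)-1$, which is immediate by discarding repeated vertices.
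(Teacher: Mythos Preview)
Your proof is correct and follows essentially the same approach as the paper: both use the containment $C \subset \bigcup_{v \in V(\mathcal{C}_k)} f_v(J_k)$ from Theorem \ref{th:C}, the fact that each piece has diameter at most $c^{k-1}\diam(X)$, and a chaining argument through the connected component $\mathcal{C}_k$ to obtain $\diam(C) \leq \#V(\mathcal{C}_k)\,c^{k-1}\diam(X)$. The paper states the chaining step more tersely (``Since $\mathcal{C}_{k}$ is a component of the simplicial complex $\mathcal{N}_{1,k}$, the triangle inequality implies $\diam(\bigcup_v f_v(J_k)) \leq \sum_v \diam(f_v(J_k))$''), whereas you spell out the chain of intermediate points explicitly; this is the same argument.
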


\begin{proof}
	By definition, $C \subset \bigcup_{v \in V(\mathcal{C}_{k})} f_{v}(J_{k})$  for every $k > 0$. 
	Since $\mathcal{C}_{k}$ is a component of the simplicial complex $\mathcal{N}_{0,  k}$, the triangle inequality implies 
	$$\diam \left( \bigcup_{v \in V(\mathcal{C}_{k})} f_{v}(J_{k}) \right) 
	\leq \sum_{v \in V({\mathcal{C}_{k})}} \diam(f_{v}(J_{k})). $$
	Thus, 
	$$\diam(C) \leq c^{k} \diam(X)\cdot  \#V(\mathcal{C}_{k}).$$
	If  the right-hand side tends to $0$ as {$k \to \infty$}, then $\diam(C) = 0$. 
	This completes the proof. 
\end{proof}

\begin{cor}\label{cor:totDisc}
	If $\lim_{k \to \infty} c^{k} \max\{\#V(\mathcal{K}) \colon \mathcal{K} \in \Con(\mathcal{N}_{0, k})\} = 0$, then the limit set $J$ is totally disconnected. 
\end{cor}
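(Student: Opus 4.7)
The plan is to derive the corollary directly from Theorem \ref{th:D} by a one-line dominating argument, using the correspondence between connected components of $J$ and components of the nerves $\mathcal{N}_{1,k}$ given by Theorem \ref{th:C}.

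First I would fix an arbitrary connected component $C \in \Con(J)$ and apply Theorem \ref{th:C} to obtain, for every $k > 1$, a component $\mathcal{C}_k \in \Con(\mathcal{N}_{1,k})$ with $C \subset \bigcup_{v \in V(\mathcal{C}_k)} f_v(J_k)$. Next, I would bound the quantity appearing in the hypothesis of Theorem \ref{th:D} by the global maximum:
\[
c^{k-1}\,\#V(\mathcal{C}_k) \;\leq\; c^{k-1}\,\max\{\#V(\mathcal{K}) \colon \mathcal{K} \in \Con(\mathcal{N}_{1,k})\}.
\]
By the standing hypothesis of the corollary, the right-hand side tends to $0$ as $k \to \infty$, and therefore so does the left-hand side. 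Theorem \ref{th:D} then yields $\diam(C) = 0$, so $C$ is a single point.

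Since $C \in \Con(J)$ was arbitrary, every connected component of $J$ is a singleton, which is precisely the definition of total disconnectedness. The argument is a direct specialization of Theorem \ref{th:D}, so no genuine obstacle arises; the only point requiring care is that the component $\mathcal{C}_k$ associated with $C$ is indeed one of the components over which the maximum is taken, which is immediate from $\mathcal{C}_k \in \Con(\mathcal{N}_{1,k})$.
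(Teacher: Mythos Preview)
Your argument is correct and is exactly the intended derivation: the paper states Corollary \ref{cor:totDisc} immediately after Theorem \ref{th:D} without a separate proof, treating it as a direct consequence. Your write-up simply makes explicit the one-line domination $c^{k-1}\#V(\mathcal{C}_k)\le c^{k-1}\max_{\mathcal{K}\in\Con(\mathcal{N}_{1,k})}\#V(\mathcal{K})$ and then invokes Theorem \ref{th:D}, which is precisely what the paper leaves to the reader.
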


We can also derive the following corollary of Theorem~\ref{th:ConToInverselimit}, which is the non-autonomous version of Hata's theorem \cite[Theorem 4.6]{Hata}. 

\begin{cor}\label{cor:Hata}
	The following are equivalent.
	\begin{enumerate}
		\item The limit set $J$ is connected. 
		\item For every $k > 0$ and for every $u, v \in  \prod_{j=1}^{k} I^{(j)}$, 
		there exist $n \in \NN$ and $v_{0}, v_{1}, \dots, v_{n} \in \prod_{j=1}^{k} I^{(j)}$ such that 
		$u=v_{0}$, $v=v_{n}$, and $f_{v_{i}}(J_{k}) \cap f_{v_{i+1}}(J_{k}) \neq \emptyset$ for every $i = 0, \dots, n-1$. 
	\end{enumerate}
\end{cor}
    
For an autonomous IFS, the limit set is path-connected if and only if it is connected \cite{Hata}. 
However, the following proposition shows that the limit set may be connected without being either path-connected or locally connected.
    
\begin{prop}\label{prop:TopSineCurve}
 There exists a non-autonomous IFS $(\Phi^{(j)})_{j=1}^\infty$ such that the limit set is homeomorphic to the topologist's sine curve.
\end{prop}

\begin{proof}
    For every $j > 1,$ let $\Phi^{(j)}$ be the collection of the following four maps on $X= [0, 1] \times [0, 1]$. 
    \begin{align*}
        &(x, y) \mapsto (x/3, y/2), \ 
        (x, y) \mapsto ((x+2)/3, y/2), \\
        &(x, y) \mapsto (x/3, (y+1)/2), \ 
        (x, y) \mapsto ((x+2)/3, (y+1)/2).
    \end{align*}
    Then $J_j = C \times[0, 1]$, the product of the middle-thirds Cantor set $C$ and the vertical unit interval, for every $j \geq 1$. 
    Let $\Phi^{(1)} = \{g\}$ be the set consisting of the following map $g\colon X\to X$.

    For every $n\geq0$, set
    \[
    r_n=3^{-n},
    \qquad
    \epsilon_n=
    \begin{cases}
    0,& n \text{ is even},\\
    1,& n \text{ is odd},
    \end{cases}
    \qquad
    p_n=(r_n,\epsilon_n) \in X,
    \]
    and define a line
    $\gamma_n\colon[0,1]\to X$ by
    \[
    \gamma_n(t)
    =
    \begin{cases}
    \bigl((1-t)r_n+tr_{n+1},t\bigr),
    & n \text{ is even},\\[1mm]
    \bigl(tr_n+(1-t)r_{n+1},t\bigr),
    & n \text{ is odd}.
    \end{cases}
    \]
    Let $S_n=\mathrm{Im} \gamma_n$ 
    and define
    \[
    T
    =
    \bigl(\{0\}\times[0,1]\bigr)
    \cup
    \bigcup_{n\geq0}S_n.
    \]
    Thus, $T$ is homeomorphic to the topologist's sine curve.

\begin{center}
\begin{tikzpicture}[x=10cm,y=3cm,
  mid arrow/.style={
    postaction={decorate},
    decoration={
      markings,
      mark=at position 0.5 with {\arrow{>}}
    }
  }
]
\pgfmathsetmacro{\rzero}{1}
\pgfmathsetmacro{\rone}{1/3}
\pgfmathsetmacro{\rtwo}{1/9}
\pgfmathsetmacro{\rthree}{1/27}
\pgfmathsetmacro{\rfour}{1/81}
\pgfmathsetmacro{\tworone}{2/3}
\pgfmathsetmacro{\twortwo}{2/9}

\coordinate (p0) at (\rzero,0);
\coordinate (p1) at (\rone,1);
\coordinate (p2) at (\rtwo,0);
\coordinate (p3) at (\rthree,1);
\coordinate (p4) at (\rfour,0);

\fill[gray!100] (0,0) rectangle (\rfour,1);

\draw[very thick, mid arrow]
  (p0) --
  node[midway,below left] {$\gamma_0$}
  (p1);

\draw[very thick, mid arrow]
  (p2) --
  node[midway,above left] {$\gamma_1$}
  (p1);
  
\draw[very thick]
  (0,0) -- (0,1)
  (p2) -- (p3)
       -- (p4);

\node[left] at (0,0.5) {$T$};


\foreach \x in {0,\rtwo,\twortwo,\rone,\tworone,\rzero}
  \draw (\x,0.02) -- (\x,-0.02);

\node[below] at (0,-0.02) {$0$};
\node[below] at (\rtwo,-0.02) {$r_2$};
\node[below] at (\twortwo,-0.02) {$2r_2$};
\node[below] at (\rone,-0.02) {$r_1$};
\node[below] at (\tworone,-0.02) {$2r_1$};
\node[below] at (\rzero,-0.02) {$r_0$};

\draw[decorate,decoration={brace,mirror,amplitude=4pt}]
  (\tworone,-0.02) -- (\rzero,-0.02)
  node[midway,yshift=-10pt] {$I_0$};

\draw[decorate,decoration={brace,mirror,amplitude=4pt}]
  (\rone,-0.02) -- (\tworone,-0.02)
  node[midway,yshift=-10pt] {$G_0$};

\draw[decorate,decoration={brace,mirror,amplitude=4pt}]
  (\twortwo,-0.2) -- (\rone,-0.2)
  node[midway,yshift=-10pt] {$I_1$};

\draw[decorate,decoration={brace,mirror,amplitude=4pt}]
  (\rtwo,-0.2) -- (\twortwo,-0.2)
  node[midway,yshift=-10pt] {$G_1$};
\end{tikzpicture}
\end{center} 

We decompose $[0,1]$ as follows. For every $n\geq0$, 
let $G_n=[r_{n+1},2r_{n+1}]$ and $I_n=[2r_{n+1},r_n]$.
Then
\[
[0,1]
=
\{0\}
\cup
\bigcup_{n\geq0}(G_n\cup I_n)
\quad
\text{and}
\quad
C\setminus\{0\}
=
\bigcup_{n\geq0}(C\cap I_n).
\]
    We now define a map
    $
    h\colon [0,1]^2\to [0,1]^2.
    $
    On each rectangle $I_n\times[0,1]$, define
    $h(x,t)=\gamma_n(t).$
    We next define $h$ on $G_n\times[0,1]$.
    Write
    $\gamma_n(t)
    =
    \bigl(\gamma_{n,1}(t),t\bigr).$
    Set
    $h(x,t)=\bigl(h_1(x,t),h_2(x,t)\bigr)$
    and define
    $h_2(x,t)=t$
    for every $(x,t)\in G_n\times[0,1]$.
    For $x\in G_n=[r_{n+1},2r_{n+1}]$, define $h_1$ as the linear interpolation between $\gamma_{n+1}$ and $\gamma_n$;
    \[
    h_1(x,t)
    =
    \frac{2r_{n+1}-x}{r_{n+1}}\gamma_{n+1,1}(t)
    +
    \frac{x-r_{n+1}}{r_{n+1}}\gamma_{n,1}(t).
    \]
    Then
    $h(r_{n+1},t)=\gamma_{n+1}(t)$
     and 
    $h(2r_{n+1},t)=\gamma_n(t)$
    for every $t\in[0,1]$.
    Finally, define
    $h(0,t)=(0,t)$
    for every $t\in[0,1]$.
    The definitions agree on the boundaries of the rectangles.
    Moreover, since $\gamma_n(t)\to(0,t)$ uniformly in $t$,
    the map $h$ is continuous on $\{0\}\times[0,1]$.
    Hence $h$ is continuous on $[0,1]^2$.

    We claim that $h$ is Lipschitz. 
    A direct calculation on each of the two rectangles gives 
    \[
    \left|
    \frac{\partial h_1}{\partial x}
    \right|
    =0 
    \text{ and }
    \left|
    \frac{\partial h_1}{\partial t}
    \right|
    \leq \frac23
    \text{ on } I_n\times[0,1]
    \quad 
    \text{ and }
    \quad 
    \left|
    \frac{\partial h_1}{\partial x}
    \right|
    \leq\frac83
    \text{ and }
    \left|
    \frac{\partial h_1}{\partial t}
    \right|
    \leq\frac23
    \text{ on } G_n\times[0,1]
    \]
    Since $h_2(x,t)=t$, it follows that
    \[
    \operatorname{Lip}(h)
    \leq
    \sqrt{
    \left(\frac83\right)^2
    +
    \left(\frac23\right)^2
    +
    1
    }
    =
    \frac{\sqrt{77}}3.
    \]
    
    Define
    $
    g(z)=h(z)/3.
    $
    Then
    $g([0,1]^2)\subset[0,1/3]^2\subset [0,1]^2$
    and
    $\Lip(g)
    \leq
    \frac{\sqrt{77}}{9}
    <1.$
    Thus, $g$ is a contraction on $X$.
    Moreover, since $h(C\times [0,1])=T$, the limit set of the non-autonomous IFS is
    $J=g(J_1)=T/3$
    which is a homeomorphic copy of $T$. 
\end{proof}

In the proof of Proposition~\ref{prop:TopSineCurve}, the limit set $J$ is not locally connected at $(0,t)\in J$ for every $t$. 
Remark that for every $k\geq1$, the nerve
$\mathcal{N}_{k,k+1}$ is disconnected since
$\mathcal{N}_{k,k+1}$ consists of two $1$-simplexes. 
Remark also that the map $g$ is not injective. 

We now consider local connectedness. 

\begin{lemma}
\label{loccon1}
    For each $x\in J$ and $k > 0,$ let \[J_{k, x}:=\bigcup_{v\in V(\mathcal{N}_{0, k}), x\in f_{v}(J_k)} f_{v}(J_{k}).\] Then $\{J_{k, x}\}_{k=1}^{\infty}$ is a neighborhood basis at $x$ in the relative topology of $J$. 
\end{lemma}

\begin{proof}
    For each $x\in J$ and $k > 0,$ we have \[x\in J\setminus \bigcup_{v\in V(\mathcal{N}_{0, k}), x\notin f_{v}(J_{k})} f_{v}(J_{k})\subset J_{k, x}.\] Since the set $$\bigcup_{v\in V(\mathcal{N}_{0, k}), x\notin f_{v}(J_{k})} f_{v}(J_{k})$$ is a finite union of compact subsets of $J$, the complement 
    is an open subset of $J.$ Furthermore, for each $x\in J$ and $k > 0,$
    \[{\rm diam}(J_{k, x})\le 2 \max_{v\in V(\mathcal{N}_{0, k})}{\rm diam}(f_{v}(J_{k}))\le 2 c^{k} {\rm diam}(X),\]
    which tends to $0$ as $k\to \infty.$ Hence $\{J_{k, x}\}_{k=1}^{\infty}$ is a neighborhood basis at $x.$
\end{proof}

\begin{prop}
\label{prop:localConn}
    If $J_k$ is connected for infinitely many $k > 0,$ 
    then $J$ is locally connected.
\end{prop}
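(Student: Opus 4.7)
The plan is to show that every point $x \in J$ possesses a neighborhood basis consisting of connected sets. Recall that a space is locally connected if and only if each of its points admits such a basis (the standard proof being that one can show connected components of open sets are open). By Lemma \ref{loccon1}, the family $\{J_{k, x}\}_{k=2}^{\infty}$ is already a neighborhood basis at $x$, so it suffices to refine arbitrarily small $J_{k, x}$ by a connected neighborhood of $x$.

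The first step is the key geometric observation: whenever $J_m$ is connected, the set $J_{m, x}$ is itself connected. Indeed, $J_{m, x}$ is a finite union of sets $f_{v}(J_m)$ with $v \in V(\mathcal{N}_{1, m})$ and $x \in f_{v}(J_m)$; each $f_{v}(J_m)$ is a continuous image of the connected set $J_m$ and hence connected, and all such pieces share the common point $x$. A union of connected sets with nonempty common intersection is connected, so $J_{m, x}$ is connected.

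The second step is to verify the nesting $J_{m, x} \subset J_{k, x}$ whenever $m > k$. For $w = (i_1, \dots, i_{m-1}) \in V(\mathcal{N}_{1, m})$ with $x \in f_{w}(J_m)$, set $v = (i_1, \dots, i_{k-1}) \in V(\mathcal{N}_{1, k})$. Lemma \ref{lem:Huchinson} applied at level $k$ gives $f^{(k)}_{i_k} \circ \cdots \circ f^{(m-1)}_{i_{m-1}}(J_m) \subset J_k$, hence $f_{w}(J_m) \subset f_{v}(J_k)$. In particular $x \in f_{v}(J_k)$, so $v$ contributes to the union defining $J_{k, x}$, and $f_{w}(J_m) \subset J_{k, x}$. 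Taking the union over admissible $w$ yields the inclusion.

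Combining the two steps proves the theorem: given any neighborhood $U$ of $x$ in $J$, choose $k$ with $J_{k, x} \subset U$ (possible by Lemma \ref{loccon1}), and then, using the hypothesis that $J_m$ is connected for infinitely many $m$, pick $m \geq k$ with $J_m$ connected. The set $J_{m, x}$ is then a connected neighborhood of $x$ contained in $J_{k, x} \subset U$. I do not anticipate any serious obstacle: the argument relies only on the two elementary lemmas already established and on the standard fact that a union of connected sets sharing a point is connected. The only subtlety worth highlighting is that $J_{k, x}$ is not itself open, but Lemma \ref{loccon1} already supplies the open set it contains around $x$, which is all that the neighborhood-basis criterion for local connectedness requires.
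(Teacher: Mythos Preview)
Your proof is correct and follows essentially the same approach as the paper: use Lemma~\ref{loccon1} to get the neighborhood basis $\{J_{k,x}\}$, observe that $J_{m,x}$ is connected whenever $J_m$ is, and pass to the subsequence of indices where $J_m$ is connected. You are more explicit than the paper in two places---you spell out why $J_{m,x}$ is connected (union of connected images sharing the point $x$) and you prove the nesting $J_{m,x}\subset J_{k,x}$ for $m>k$---whereas the paper simply asserts that $\{J_{k_n,x}\}$ is a connected neighborhood basis, relying implicitly on the diameter bound from Lemma~\ref{loccon1} rather than on nesting.
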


\begin{proof}
    Fix $x\in J$ arbitrarily. 
    For each $k > 0,$ let $J_{k, x}$ be the set defined in Lemma \ref{loccon1} such that $\{J_{k, x}\}_{k=1}^{\infty}$ is a neighborhood basis at $x$. 
    If $J_k$ is connected, then the set $J_{k, x}$ is connected. 
    By the assumption, we can choose a strictly increasing sequence $(k_n)_{n\in\NN}$ such that $J_{k_n}$ is connected. 
    Then $\{J_{{k_n}, x}\}_{n\in\NN}$ is a connected neighborhood basis at $x.$ Since $x$ is an arbitrary point, $J$ is locally connected.
\end{proof}

To show the path-connectedness of $J$, we demonstrate the recursive structure of nerves in the next section.

\section{The recursive structure and the subcomplex }\label{sec:exact}
In this section, 
we investigate the recursive structure of a non-autonomous IFS and express it as a subcomplex. 
Then we formulate the exact sequences of homology groups.
This enables us to calculate the ranks of the homology groups in the next section. 

\begin{definition}\label{def:xi}
	For $0 \leq j < k < \ell$, consider three nerves $\mathcal{N}_{j, k}$, $\mathcal{N}_{k, \ell}$, and $\mathcal{N}_{j, \ell}$.
	For every $u = (i_{j+1}, \dots, i_{k})\in V(\mathcal{N}_{j, k})$, 
	define a map $\xi_u \colon V(\mathcal{N}_{k, \ell}) \to V(\mathcal{N}_{j, \ell})$ 
	by $\xi_u(v) = (i_{j+1}, \dots, i_{k}, i_{k+1}, \dots, i_{\ell})$ for $v = (i_{k+1}, \dots, i_{\ell})  \in V(\mathcal{N}_{k, \ell})$. 
\end{definition}

\begin{lemma}\label{lem:xiSimplicial}
    For $0 \leq j < k < \ell$,  
    the map $\xi_u$ is injective and the map 
    $\xi_u  \colon \mathcal{N}_{k, \ell} \to \mathcal{N}_{j, \ell}$ defined by $s \mapsto \xi_u(s)$ is simplicial. 
    Thus,  the image $\xi_u(\mathcal{N}_{k, \ell})$ is a subcomplex of $\mathcal{N}_{j, \ell}$.
\end{lemma}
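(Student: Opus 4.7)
The plan is to unfold the definitions of $V(\mathcal{N}_{j,\ell})$, $V(\mathcal{N}_{k,\ell})$, and $f_v$ carefully, and then observe that $\xi_u$ simply prepends the fixed word $u = (i_j, \dots, i_{k-1})$ to each suffix. Injectivity will then be immediate: if $\xi_u(v) = \xi_u(v')$, the last $\ell - k$ coordinates agree, so $v = v'$.

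For the simplicial property, the key algebraic identity to verify first is $f_{\xi_u(v)} = f_u \circ f_v$ for every $v \in V(\mathcal{N}_{k,\ell})$. This follows directly from the definition $f_v = f^{(j)}_{i_j} \circ \cdots \circ f^{(\ell-1)}_{i_{\ell-1}}$ for $v = (i_j, \dots, i_{\ell-1})$, applied both to $\xi_u(v)$ (a word of length $\ell - j$) and to the concatenation of $u$ with $v$. Given a $q$-simplex $\{v_0, \dots, v_q\} \in \mathcal{N}_{k,\ell}$, I then take a point $x$ in the non-empty intersection $\bigcap_{p=0}^{q} f_{v_p}(J_\ell)$ and observe
\[
f_u(x) \in \bigcap_{p=0}^{q} f_u(f_{v_p}(J_\ell)) = \bigcap_{p=0}^{q} f_{\xi_u(v_p)}(J_\ell),
\]
so the image set is non-empty. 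Combined with injectivity of $\xi_u$, this ensures that $\{\xi_u(v_0), \dots, \xi_u(v_q)\}$ is again a $q$-simplex, i.e.\ $\xi_u$ is simplicial.

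Finally, to see that $\xi_u(\mathcal{N}_{k,\ell})$ is a subcomplex of $\mathcal{N}_{j,\ell}$, I only need to check that it is closed under taking faces. Any face of $\xi_u(s)$ has the form $\xi_u(s')$ with $s' \subset s$; since $\mathcal{N}_{k,\ell}$ is itself a simplicial complex, $s' \in \mathcal{N}_{k,\ell}$, so $\xi_u(s') $ lies in the image.

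I do not anticipate any real obstacle here: the argument is essentially bookkeeping, and the one substantive ingredient is the functorial identity $f_{\xi_u(v)} = f_u \circ f_v$, which is where the choice of prefix $u$ interacts with the nerve structure. The only point to state carefully is that equality (not merely inclusion) of $f_u\bigl(\bigcap_p f_{v_p}(J_\ell)\bigr)$ with $\bigcap_p f_u(f_{v_p}(J_\ell))$ is not needed; the inclusion used above suffices to transfer non-emptiness.
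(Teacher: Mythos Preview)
Your proposal is correct and spells out precisely the routine verification the paper has in mind; the paper's own proof consists of the single sentence ``The proof is easy.'' Your use of the identity $f_{\xi_u(v)} = f_u \circ f_v$ to transport non-emptiness of the intersection, together with the observation that prefixing by a fixed word is injective, is exactly the intended argument.
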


\begin{proof}
    This follows directly from the definitions.
\end{proof}

\begin{lemma}\label{lem:induction}
    Let $k > 0$ and consider the nerves $\mathcal{N}_{0, k}$,  $\mathcal{N}_{k, k+1}$, 
    and $\mathcal{N}_{0, k+1}$.
    If $\mathcal{N}_{k, k+1}$ is connected, then $\phi_* \colon \Con(\mathcal{N}_{0, k+1}) \to \Con(\mathcal{N}_{0, k})$ is bijective. 
\end{lemma}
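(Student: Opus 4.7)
The plan is as follows. Surjectivity of $\phi_*$ is already provided by Lemma \ref{lem:ComponentsSurjective}, so the whole task reduces to proving injectivity. Translating injectivity into the language of equivalence classes on vertex sets, it suffices to show: whenever $v, v' \in V(\mathcal{N}_{1,k+1})$ satisfy $\phi(v) \sim \phi(v')$ in $\mathcal{N}_{1,k}$, we have $v \sim v'$ in $\mathcal{N}_{1,k+1}$.

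First I would analyze the fibers of $\phi$. Since vertices of $\mathcal{N}_{1,k+1}$ are words in $I^{(1)} \times \cdots \times I^{(k)}$ and $\phi$ truncates the last letter, one checks directly that for each $u \in V(\mathcal{N}_{1,k})$ the fiber $\phi^{-1}(u)$ equals $\xi_u(V(\mathcal{N}_{k,k+1}))$, using Definition \ref{def:xi} with $j=1$ and $\ell=k+1$. By Lemma \ref{lem:xiSimplicial}, $\xi_u(\mathcal{N}_{k,k+1})$ is a simplicial subcomplex of $\mathcal{N}_{1,k+1}$, and the simplicial map $\xi_u$ sends edges to edges. Assuming $\mathcal{N}_{k,k+1}$ is connected, Lemma \ref{lem:compIsMapedToComp} implies that $\xi_u(\mathcal{N}_{k,k+1})$ is connected (as a subcomplex of $\mathcal{N}_{1,k+1}$), so any two vertices in $\phi^{-1}(u)$ are equivalent under $\sim$ in $\mathcal{N}_{1,k+1}$. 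This is the crucial fiber-connectedness property.

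Next I would chain together the lifts of a path. Given $v, v'$ with $\phi(v) \sim \phi(v')$, choose a chain of $1$-simplexes $\{u_p, u_{p+1}\}$ in $\mathcal{N}_{1,k}$ with $u_0 = \phi(v)$ and $u_n = \phi(v')$. By Lemma \ref{lem:simplicialSurjective} applied to each $1$-simplex $\{u_p, u_{p+1}\}$, pick a $1$-simplex $\{a_p, b_p\}$ in $\mathcal{N}_{1,k+1}$ with $\phi(a_p)=u_p$ and $\phi(b_p)=u_{p+1}$. Then $a_p \sim b_p$ trivially, while $b_p, a_{p+1} \in \phi^{-1}(u_{p+1})$ are equivalent by the fiber-connectedness step above. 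Similarly $v \sim a_0$ (both in $\phi^{-1}(u_0)$) and $b_{n-1} \sim v'$ (both in $\phi^{-1}(u_n)$). Concatenating these equivalences gives $v \sim v'$ in $\mathcal{N}_{1,k+1}$, hence $\phi_*$ is injective.

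The main obstacle is the mismatch between successive lifts: the lifts $\{a_p, b_p\}$ and $\{a_{p+1}, b_{p+1}\}$ do not in general share a vertex above $u_{p+1}$, so the path downstairs does not lift to a single path upstairs without additional input. The hypothesis that $\mathcal{N}_{k,k+1}$ is connected is exactly what is needed to bridge $b_p$ and $a_{p+1}$ inside each fiber via the subcomplex $\xi_{u_{p+1}}(\mathcal{N}_{k,k+1})$, and this is where the whole recursive structure encoded in Definition \ref{def:subcomplex} enters the proof.
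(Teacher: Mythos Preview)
Your proposal is correct and follows essentially the same route as the paper's proof: surjectivity from Lemma~\ref{lem:ComponentsSurjective}, then injectivity by lifting each edge of a chain in $\mathcal{N}_{1,k}$ via Lemma~\ref{lem:simplicialSurjective} and bridging consecutive lifts inside each fiber using the connectedness of $\mathcal{N}_{k,k+1}$ together with Lemma~\ref{lem:compIsMapedToComp} applied to $\xi_u$. The only cosmetic difference is that you package the bridge step as a standalone ``fiber-connectedness'' property, whereas the paper carries it out inline with the explicit notation $v_p i_p$, $v_{p+1} i'_p$.
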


\begin{proof}
    Surjectivity is due to Lemma~ \ref{lem:ComponentsSurjective}. 
	Take $\tilde{u}, \tilde{v} \in V(\mathcal{N}_{0, k+1})$ such that $\phi(\tilde{u}) \sim \phi(\tilde{v})$ in $\mathcal{N}_{0, k}$, 
    and we show $\tilde{u} \sim \tilde{v}$ in $\mathcal{N}_{0, k+1}$. 
	Let $u = \phi(\tilde{u})$ and $v = \phi(\tilde{v})$ be the images under the simplicial map $\phi \colon \mathcal{N}_{0, k+1} \to \mathcal{N}_{0, k}$.
	Since $u \sim v$ in $\mathcal{N}_{0, k}$,  
	there exist $n \in \NN$ and $v_{0}, v_{1}, \dots, v_{n} \in V(\mathcal{N}_{0, k})$ such that $\{v_p, v_{p+1}\} \in \mathcal{N}_{0, k}$ for every $p = 0, \dots, n-1$, and $u=v_{0}$, $v=v_{n}$. 
	By Lemma \ref{lem:simplicialSurjective}, for every $p = 0, \dots, n-1$, there exist $i_p$ and $i'_p \in V(\mathcal{N}_{k, k+1})$ such that $\{v_{p}i_p, v_{p+1}i'_{p}\} \in \mathcal{N}_{0, k+1}$.
    Let $i_{n}:=i'_{n-1}$.
	Since $\mathcal{N}_{k, k+1}$ is connected, we have $i'_p \sim i_{p+1}$ in $\mathcal{N}_{k, k+1}$ for every $p = 0, \dots, n-1$, and hence
	$v_{p+1}i'_p = \xi_{v_{p+1}}(i'_{p})\sim \xi_{v_{p+1}}(i_{p+1}) = v_{p+1}i_{p+1}$ in $\mathcal{N}_{0, k+1}$ by Lemma \ref{lem:compIsMapedToComp}. 
	Combining these relations, we have $v_{p}i_p \sim v_{p+1}i'_p\sim v_{p+1}i_{p+1}$ for every $p = 0, \dots, n-1$ in $\mathcal{N}_{0, k+1}$.
	Similarly, Lemma \ref{lem:compIsMapedToComp} also implies that $\tilde{u} \sim v_0i_0$ and $\tilde{v} \sim v_{n}i_n$ in $\mathcal{N}_{0, k+1}$ since $\mathcal{N}_{k, k+1}$ is connected. 
	Therefore, we have 
	$$\tilde{u}  \sim v_0i_0 \sim v_1i_1 \sim \dots \sim v_{n}i_n \sim \tilde{v}$$
	 in $\mathcal{N}_{0, k+1}$.
	This shows that $\phi_* \colon \Con(\mathcal{N}_{0, k+1}) \to \Con(\mathcal{N}_{0, k})$ is injective.
\end{proof}

The converse of Lemma \ref{lem:induction} does not hold. 
Namely,  
in Example \ref{ex:twoGenerator}, the nerves $\mathcal{N}_{0, 1}$ and $\mathcal{N}_{0, 2}$ are connected, 
while the nerve $\mathcal{N}_{1, 2}$ is not connected. 

\begin{cor}\label{cor:SuffConn}
    Suppose that $\mathcal{N}_{k, k+1}$ is connected for every $k \geq 0$. 
    Then, $J$ is connected. 
\end{cor}

\begin{proof}
    By Lemma~\ref{lem:induction}, all the maps 
    $\dots\to\Con(\mathcal{N}_{0, 2}) \to \Con(\mathcal{N}_{0, 1})$
    are bijective. 
    Therefore, the inverse limit is isomorphic to $\Con(\mathcal{N}_{0, 1})$ which consists of only one element.
    It follows from Theorem~\ref{th:ConToInverselimit} that $J$ is connected. 
\end{proof}

\begin{cor}\label{cor:CompIsPathCon}
    Suppose that $\mathcal{N}_{k, k+1}$ is connected for every $k > 0$. 
    Then, every connected component $C\in\Con(J)$ is locally connected and path-connected. 
\end{cor}

\begin{proof}
    By Corollary~\ref{cor:SuffConn}, the set $J_k$ is connected for every $k > 0$. 
    Thus, by Proposition~\ref{prop:localConn}, $J$ is locally connected. 
    It follows from the Hahn-Mazurkiewicz theorem \cite[p.129]{HY} that the connected and locally connected compact metric space $C$ is a continuous image of the unit interval, and hence is path-connected. 
    One can also give a direct proof by constructing a desired path using the nested structure. 
\end{proof}

We obtain a monotonicity for the rank of the first homology. 

\begin{lemma}\label{lem:H1surj}
    Let $k > 0$. If $\mathcal{N}_{k, k+1}$ is connected, 
    then $\phi_* \colon H_1(\mathcal{N}_{0, k+1}) \to H_1(\mathcal{N}_{0, k})$ is surjective. 
\end{lemma}

\begin{proof}
    We denote by $C_1(\mathcal{N})$ the first oriented chain group for a simplicial complex $\mathcal{N}$, that is, 
    the free abelian group generated by the oriented $1$-simplexes $[u, v]$ of $\mathcal{N}$, see \cite[Chapter 4]{Spa}.
    
    Take an arbitrary homology class of $H_1(\mathcal{N}_{0, k})$, and fix a representative $z \in C_1(\mathcal{N}_{0, k})$. 
    We write 
    $z = \sum_{p=0}^n \sigma_p$ where $n \in \NN$ and $\sigma_p$ is an oriented $1$-simplex for every $p = 0, \dots, n$. 
    Since $\partial z = 0$, 
    we may decompose $z$ into a finite sum of cycles each of which is represented by a cyclic sequence of oriented $1$-simplexes. 
    Thus, we may assume $\sigma_p = [v_p, v_{p+1}]$ and $v_{n+1} = v_0$ without loss of generality. 
    By Lemma \ref{lem:simplicialSurjective}, 
    for every $p = 0, \dots, n$, 
    there exists an oriented $1$-simplex $\tilde{\sigma}_p = [\tilde{v}_p, \tilde{v}'_p]\in C_1(\mathcal{N}_{0, k+1})$ such that 
    $\phi_*(\tilde{\sigma}_p) = \sigma_p$. 
    Then  
    $\phi(\tilde{v}'_p) = v_{p+1} = \phi(\tilde{v}_{p+1})$ for every $p = 0, \dots, n-1$.  
    Denote $\tilde{v}'_p= v_{p+1}i'_p $ and $\tilde{v}_{p+1} = v_{p+1}i_{p+1}$ for every $p = 0, \dots, n-1$.   
    Since $\mathcal{N}_{k, k+1}$ is connected, we have $i'_p \sim i_{p+1}$ in $\mathcal{N}_{k, k+1}$, and hence
    $\tilde{v}'_p = \xi_{v_{p+1}}(i'_{p})\sim \xi_{v_{p+1}}(i_{p+1}) = \tilde{v}_{p+1}$ in $\mathcal{N}_{0, k+1}$ for every $p = 0, \dots, n-1$ by Lemma~\ref{lem:compIsMapedToComp}. 
    In particular, for every $p=0, \dots, n-1$, 
    there exists $\tau_p \in C_1(\mathcal{N}_{0, k+1})$ such that 
    $\partial \tau_p =  [\tilde{v}_{p+1}] - [\tilde{v}'_p]$ and $\phi_*(\tau_p) = 0 \in C_1(\mathcal{N}_{0, k})$. 
    Also, there exists $\tau_n \in C_1(\mathcal{N}_{0, k+1})$ such that 
    $\partial \tau_n =  [\tilde{v}_{0}] - [\tilde{v}'_n]$ and $\phi_*(\tau_n) = 0 \in C_1(\mathcal{N}_{0, k})$.  
    We then have $$\tilde{z}:=\sum_{p=0}^n (\tilde{\sigma}_p+ \tau_p) \in \ker (\partial\colon C_1(\mathcal{N}_{0, k+1}) \to C_0(\mathcal{N}_{0, k+1}))$$ and $\phi_*(\tilde{z}) = z$, which shows the surjectivity of $\phi_* \colon H_1(\mathcal{N}_{0, k+1}) \to H_1(\mathcal{N}_{0, k})$. 
\end{proof}

By Lemma~\ref{lem:xiSimplicial}, we can define a subcomplex which represents the recursive structure of the system. 

\begin{definition}\label{def:subcomplex}
	For $0 \leq j < k < \ell$, define $\mathcal{M}_{j, k, \ell} = \bigcup_{u \in V(\mathcal{N}_{j, k})} \xi_u(\mathcal{N}_{k, \ell})$. 
    We call this the $(j,k,\ell)$-subcomplex. 
\end{definition}

\begin{lemma}\label{lem:homologyOfMjkl}
    For $0 \leq j < k < \ell$ and $q \geq 0$, 
    there is an isomorphism 
    $\bigoplus_{u \in V(\mathcal{N}_{j, k})} H_q({\mathcal{N}_{k, \ell}}) \cong H_q(\mathcal{M}_{j, k, \ell})$ 
     induced by the direct sum of the maps $(\xi_u)_* \colon H_q({\mathcal{N}_{k, \ell}}) \to H_q(\mathcal{M}_{j, k, \ell})$. 
\end{lemma}

\begin{proof}
    Since the union $\bigcup_{u \in V(\mathcal{N}_{j, k})} \xi_u(\mathcal{N}_{k, \ell})$ is disjoint, the claim follows from additivity of homology over disjoint unions.
\end{proof}

To calculate the homology group $H_q$ with $q \geq 1$, 
we use the long exact sequence of homology groups.
See \cite[Theorem 4.4]{Spa}, or see  \cite[Theorem 2.13]{Hatcher} and the comment on good pairs below Theorem 2.13 of the book. 

\begin{theorem}
	\label{th:exact}
	Let $0 \leq j < k < \ell$. 
	Consider the nerve $\mathcal{N}_{j, \ell}$ and the $(j, k, \ell)$-subcomplex $\mathcal{M}_{j, k, \ell}$. 
	Then there is a long exact sequence of the homology groups
	\begin{equation*}
	\begin{tikzcd}[column sep=1em]
	\cdots \arrow[r, "\partial"] & {H}_q(\mathcal{M}_{j, k, \ell}) \arrow[r, "\iota"] & {H}_q(\mathcal{N}_{j, \ell}) 	\arrow[r, "\varpi"] & {H}_q(\mathcal{N}_{j, \ell}, \mathcal{M}_{j, k, \ell})
	\arrow[r, "\partial"] & {H}_{q-1}(\mathcal{M}_{j, k, \ell}) \arrow[r] & \cdots
	\end{tikzcd}
	\end{equation*}
	where 
	$\iota$ is induced by inclusion and 
	$\varpi$ is induced by the quotient map. 
\end{theorem}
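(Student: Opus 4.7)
The plan is to recognize Theorem \ref{th:exact} as a direct instance of the standard long exact sequence associated to a pair of simplicial complexes, so essentially no genuinely new algebraic topology is needed. The only content specific to this paper is the verification that $(\mathcal{N}_{j,\ell}, \mathcal{M}_{j,k,\ell})$ is indeed a pair, i.e., that $\mathcal{M}_{j,k,\ell}$ is a bona fide subcomplex of $\mathcal{N}_{j,\ell}$.

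First I would check the subcomplex claim. By Definition \ref{def:subcomplex}, $\mathcal{M}_{j,k,\ell}=\bigcup_{u\in V(\mathcal{N}_{j,k})}\xi_u(\mathcal{N}_{k,\ell})$, and each $\xi_u(\mathcal{N}_{k,\ell})$ is already a subcomplex of $\mathcal{N}_{j,\ell}$ by Lemma \ref{lem:xiSimplicial}. A union of subcomplexes of a simplicial complex is again a subcomplex (every face of every simplex in the union lies in some summand), so $\mathcal{M}_{j,k,\ell}\subset\mathcal{N}_{j,\ell}$ is a subcomplex.

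Next, I would invoke the standard machinery. For the subcomplex inclusion, the simplicial chain complexes fit into a short exact sequence of chain complexes
\begin{equation*}
0\longrightarrow C_\ast(\mathcal{M}_{j,k,\ell})\xrightarrow{\iota_\#} C_\ast(\mathcal{N}_{j,\ell})\xrightarrow{\varpi_\#} C_\ast(\mathcal{N}_{j,\ell})/C_\ast(\mathcal{M}_{j,k,\ell})\longrightarrow 0,
\end{equation*}
where the quotient defines the relative chain complex $C_\ast(\mathcal{N}_{j,\ell},\mathcal{M}_{j,k,\ell})$ whose homology is by definition $H_\ast(\mathcal{N}_{j,\ell},\mathcal{M}_{j,k,\ell})$. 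Applying the zig-zag (snake) lemma yields the desired long exact sequence, with $\iota$, $\varpi$ induced by inclusion and quotient respectively, and the connecting homomorphism $\partial$ defined in the usual way: a relative $q$-cycle is represented by a chain $c\in C_q(\mathcal{N}_{j,\ell})$ with $\partial c\in C_{q-1}(\mathcal{M}_{j,k,\ell})$, and $\partial[c]:=[\partial c]$.

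No step is genuinely a main obstacle, since once the subcomplex condition is secured the rest is the well-known long exact sequence of a simplicial pair (see \cite[Theorem 4.4]{Spa} or \cite[Theorem 2.13]{Hatcher}). The proof is therefore expected to be a brief reduction: verify the subcomplex property and cite the standard result.
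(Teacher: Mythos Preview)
Your proposal is correct and matches the paper's approach: the paper does not give a separate proof but simply cites the standard long exact sequence of a pair (\cite[Theorem 4.4]{Spa} or \cite[Theorem 2.13]{Hatcher}), relying on Lemma \ref{lem:xiSimplicial} and Definition \ref{def:subcomplex} for the fact that $\mathcal{M}_{j,k,\ell}$ is a subcomplex. Your write-up is slightly more explicit than the paper itself, but the content is the same.
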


In the next section, we directly calculate the relative homology group ${H}_q(\mathcal{N}_{j, \ell}, \mathcal{M}_{j, k, \ell})$, although it is sometimes better to use the fact that it is isomorphic to the reduced homology group $\tilde{H}_q(|\mathcal{N}_{j, \ell}|/|\mathcal{M}_{j, k, \ell}|)$ of the quotient space of geometric realization $|\mathcal{N}_{j, \ell}|$ by $|\mathcal{M}_{j, k, \ell}|$ for all $q \geq 0$.

\section{Non-autonomous fractal squares and their generalizations}\label{sec:ex}
In this section, we apply Theorems \ref{res:CechSumi}, \ref{res:totDisc}, \ref{res:Hata}, and \ref{res:exact} to the so-called fractal squares and to their higher-dimensional generalizations. 
Throughout this section, we consider the following class. 

\begin{definition}\label{ex:fracSq}
    Let $d \in \NN$ and $X = [0, 1]^{d}$, the $d$-dimensional unit  cube. 
    For each $k =1, 2, \dots, d$, 
    let $n_{k} \in \NN$ with $n_{k} \geq 2$ and set $I = \prod_{k = 1}^{d}\{0, 1, \dots, n_{k}-1\}$. 
    For each $\mathbf{i} = (i_{1}, \dots, i_{d}) \in I$, 
    define a contractive map $f_{\mathbf{i}} \colon X \to X$ by 
    $$f_{\mathbf{i}}(x)= f_{\mathbf{i}}(x_{1}, \dots, x_{d})   = \left(\frac{x_{1} +i_{1}}{n_{1}}, \dots, \frac{x_{d} +i_{d}}{n_{d}}\right).$$
    For every $j \geq 1$, a non-empty subset $I^{(j)} \subset I$ is given. 
    Then $\Phi^{(j)} = \{f_{\mathbf{i}}\}_{\mathbf{i}\in I^{(j)}}$ forms a non-autonomous IFS 
    $(\Phi^{(j)})_{j \geq 1}$. 
\end{definition}

For instance, 
suppose that  $d = 2$ and $I^{(j)} = I^{(j+1)}$ for every $j \geq 1$. 
If $n_{1} = n_{2}$, then the (autonomous) limit set is known as a fractal square. 
The Sierpi\'nski carpet is an example of a fractal square. 
If $n_{1} \neq n_{2}$, then the limit set is called a Bedford-McMullen carpet. 

Our interest lies in the case where the sets $I^{(j)}$ need not be identical. 
We shall investigate when the limit set is connected, totally disconnected, or has other topological properties. 
We also compute the first Betti numbers.

\subsection{Connectedness}
\begin{lemma}\label{lem:corePlane}
    Suppose that for some $k =1, 2, \dots, d$, for every $j \geq 1$,
    we have $\#\left(I\setminus I^{(j)}\right) < n_{k}$. 
    Then there exists a non-empty set $\tilde{J} \subset [0, 1]$ 
    such that $J \supset [0, 1]^{k-1} \times \tilde{J} \times  [0, 1]^{d-k}$. 
    Here, if $k =1$ or $d$, then the set $[0, 1]^{k-1} \times \tilde{J} \times  [0, 1]^{d-k}$ is understood as 
    $\tilde{J} \times  [0, 1]^{d-1}$ or 
    $[0, 1]^{d-1} \times\tilde{J}$, respectively. 
\end{lemma}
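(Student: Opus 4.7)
The approach is to combine a pigeonhole argument at each level with a base-$n_{k}$ coding construction. For each $j \geq 1$ and each $\alpha \in \{0,1,\dots,n_{k}-1\}$, call $S_{\alpha}^{(j)} = \{\mathbf{i} \in I : i_{k} = \alpha\}$ the $\alpha$-th slab; the $n_{k}$ slabs partition $I$. Since fewer than $n_{k}$ elements are removed to form $I^{(j)}$, and each removed element belongs to exactly one slab, at least one slab index $\alpha_{j} \in \{0,\dots,n_{k}-1\}$ satisfies $S_{\alpha_{j}}^{(j)} \subset I^{(j)}$. Fix such a choice $(\alpha_{j})_{j \geq 1}$ once and for all.

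Next, set $y = \sum_{j=1}^{\infty} \alpha_{j}/n_{k}^{j} \in [0,1]$ and take $\tilde{J} = \{y\}$, which is non-empty. To establish the desired inclusion, fix an arbitrary point $p = (x_{1},\dots,x_{k-1},y,x_{k+1},\dots,x_{d})$. For each $\ell \neq k$, pick a base-$n_{\ell}$ digit expansion $x_{\ell} = \sum_{j \geq 1} i_{\ell}^{(j)}/n_{\ell}^{j}$ with $i_{\ell}^{(j)} \in \{0,\dots,n_{\ell}-1\}$. Assemble the multi-index $\mathbf{i}^{(j)}$ whose $k$-th entry is $\alpha_{j}$ and whose $\ell$-th entry is $i_{\ell}^{(j)}$ for $\ell \neq k$. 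By the slab property $\mathbf{i}^{(j)} \in I^{(j)}$, and a direct coordinate-wise computation together with the contraction yielding convergence of the nested intersection shows $\Pi(\mathbf{i}^{(1)},\mathbf{i}^{(2)},\dots) = p$. Hence $p \in J$, which proves $[0,1]^{k-1} \times \{y\} \times [0,1]^{d-k} \subset J$.

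Both the pigeonhole step and the coding step are elementary, so the main obstacle is merely careful bookkeeping: one must verify that the coordinate-wise action of $f_{\mathbf{i}}$ coincides with the base-$n_{\ell}$ shift on the $\ell$-th coordinate, so that the infinite composition produces the prescribed $y$ in the $k$-th slot while independently reaching any $x_{\ell} \in [0,1]$ in the remaining slots. A minor caveat is that some $x_{\ell}$ admit two base-$n_{\ell}$ expansions, but any single valid expansion suffices. Incidentally, letting the sequence $(\alpha_{j})$ range over all admissible choices gives the possibly larger conclusion $\tilde{J} \supset \{\sum_{j \geq 1} \alpha_{j}/n_{k}^{j} : S_{\alpha_{j}}^{(j)} \subset I^{(j)} \text{ for every } j\}$.
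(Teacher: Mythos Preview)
Your proof is correct and follows essentially the same approach as the paper: a pigeonhole argument on the $k$th coordinate (your ``slab'' condition $S_{\alpha_j}^{(j)}\subset I^{(j)}$ is exactly $\alpha_j\notin\pi_k(I\setminus I^{(j)})$), followed by a base-$n_\ell$ expansion in each coordinate to exhibit the desired inclusion. The only cosmetic difference is that the paper packages all admissible sequences $(\alpha_j)$ as the limit set $\tilde J$ of an auxiliary one-dimensional non-autonomous IFS, whereas you take $\tilde J$ to be a single point and mention the larger set only as a side remark; since the lemma merely asks for a non-empty $\tilde J$, both versions suffice.
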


\begin{proof}
    Let $\pi_{k} \colon \RR^{d} \to \RR$ be the projection onto the $k$th coordinate. 
    For every $j \geq 1$, since $\#\left(I\setminus I^{(j)}\right) < n_{k}$, 
    the image $\pi_{k}\left(I\setminus I^{(j)}\right)$ 
    has a non-empty complement $\tilde{I}^{(j)} := \{0, 1, \dots, n_{k}-1\} \setminus \pi_{k}\left(I\setminus I^{(j)}\right)$. 
    We now consider the one-dimensional maps
    $g_{i}(x) = \left(x +i\right)/{n_{k}}$;
    then $(\{ g_{i}\}_{i \in \tilde{I}^{(j)}})_{j \geq 1}$ is a non-autonomous IFS on $[0, 1]$. 
    Let $\tilde{J}$ denote its limit set, which is a non-empty subset of $[0, 1]$. 
    
    We show that $J \supset [0, 1]^{k-1} \times \tilde{J} \times  [0, 1]^{d-k}$. 
    For every $(x_{1}, \dots, x_{d}) \in [0, 1]^{k-1} \times \tilde{J} \times  [0, 1]^{d-k}$, 
    we consider an $n_{\ell}$-ary expansion of $x_{\ell}$. 
    That is, for every $\ell = 1, 2, \dots, d$,  and for every $j \geq 1$, 
    there exists $i^{(j)}_{\ell} \in \{0, 1, \dots, n_{\ell}-1\} $ such that 
    $$x_{\ell} = \sum_{j=1}^{\infty}\frac{i^{(j)}_{\ell}}{n_{\ell}^{j}}. $$ 
    Since $x_{k} \in \tilde{J}$, we can assume that $i^{(j)}_{k} \in \tilde{I}^{(j)}$ for every $j \geq 1$. 
    Then, for every $j \geq 1$, we have $\mathbf{i}^{(j)} = (i^{(j)}_{1}, \dots, i^{(j)}_{d}) \in I^{(j)}$. 
    Therefore, the sequence $(\mathbf{i}^{(1)}, \mathbf{i}^{(2)}, \dots )   \in \prod_{j=1}^{\infty} I^{(j)}$ 
    is mapped to the point $(x_{1}, \dots, x_{d})$ under the coding map of the non-autonomous IFS $(\Phi^{(j)})_{j=1}^{\infty}$.
    This completes the proof. 
\end{proof}

By a similar argument, under some condition, we can show that the limit set contains a line. 

\begin{lemma}\label{lem:coreLine}
    Suppose that for some $k =1, 2, \dots, d$, for every $j \geq 1$, 
    we have $\#\left(I\setminus I^{(j)}\right) < \prod_{\ell \neq k} n_{\ell}$. 
    Then for every $\ell \neq k$, there exists  $x^{*}_{ \ell} \in [0, 1]$ such that 
    $$J \supset
     \{x^{*}_{1}\} \times \dots \times \{x^{*}_{k-1}\} \times [0, 1] \times 
    \{x^{*}_{k+1}\} \times \dots \times \{x^{*}_{d}\}.$$ 
\end{lemma}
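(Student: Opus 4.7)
The plan is to mirror the proof of Lemma \ref{lem:corePlane} with the roles of the coordinates essentially swapped: instead of projecting onto the $k$-th coordinate, project away from it. Let $\pi_{\hat k} \colon I \to \prod_{\ell \neq k}\{0, 1, \dots, n_\ell - 1\}$ denote the map that forgets the $k$-th entry. The key pigeonhole observation is that, for each $j \geq 1$,
\[
\#\pi_{\hat k}(I \setminus I^{(j)}) \leq \#(I \setminus I^{(j)}) < \prod_{\ell \neq k} n_\ell,
\]
so the complement $\prod_{\ell \neq k}\{0, \dots, n_\ell - 1\} \setminus \pi_{\hat k}(I \setminus I^{(j)})$ is non-empty. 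For each $j$, I would pick any $(i_\ell^{(j)})_{\ell \neq k}$ from this complement. By construction, for every $i_k \in \{0, \dots, n_k - 1\}$, the full tuple with $i_k$ inserted in the $k$-th slot still lies in $I^{(j)}$, since otherwise its non-$k$ projection would belong to $\pi_{\hat k}(I \setminus I^{(j)})$.

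Next, I would define the target points in the non-$k$ directions via $n_\ell$-ary expansions: for each $\ell \neq k$, set
\[
x^*_\ell := \sum_{j=1}^{\infty} \frac{i_\ell^{(j)}}{n_\ell^j} \in [0, 1].
\]
To prove that every point of the form $(x^*_1, \dots, x^*_{k-1}, x_k, x^*_{k+1}, \dots, x^*_d)$ with $x_k \in [0, 1]$ lies in $J$, I would expand $x_k$ in base $n_k$ as $x_k = \sum_j i_k^{(j)}/n_k^j$ with $i_k^{(j)} \in \{0, \dots, n_k - 1\}$, and form $\mathbf{i}^{(j)} = (i_1^{(j)}, \dots, i_d^{(j)})$. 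By the preceding paragraph, $\mathbf{i}^{(j)} \in I^{(j)}$ regardless of the choice of $i_k^{(j)}$, and then the coding map of $(\Phi^{(j)})_{j=1}^{\infty}$ applied to $(\mathbf{i}^{(j)})_{j \geq 1}$ returns precisely the desired point.

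The argument is a direct adaptation of Lemma \ref{lem:corePlane}, and no serious obstacle is expected. The only delicate point is choosing the pigeonhole condition correctly: one must avoid $\pi_{\hat k}(I \setminus I^{(j)})$ rather than $I \setminus I^{(j)}$ itself, as that is exactly what guarantees the $n_k$ tuples with fixed non-$k$ coordinates and arbitrary $k$-coordinate simultaneously belong to $I^{(j)}$, which is needed for the $k$-coordinate to sweep out all of $[0,1]$.
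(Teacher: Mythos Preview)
Your proposal is correct and follows essentially the same approach as the paper's own proof. The paper defines the projection $\hat{\pi}_k$ deleting the $k$th coordinate, takes $\hat{I}^{(j)} = \bigl(\prod_{\ell\neq k}\{0,\dots,n_\ell-1\}\bigr)\setminus \hat{\pi}_k(I\setminus I^{(j)})$, and observes that the $(d-1)$-dimensional non-autonomous IFS generated by $(\hat{I}^{(j)})_{j\ge 1}$ has non-empty limit set $\hat{J}$, any point of which furnishes the $x^*_\ell$; your version simply makes this explicit by picking one element of $\hat{I}^{(j)}$ at each level and writing out the $n_\ell$-ary expansions.
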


In Lemma \ref{lem:coreLine},  the right-hand side is understood as in Lemma~\ref{lem:corePlane} if $k =1$ or $d$. 
Moreover, if $d =1$, then  $\prod_{\ell \neq k} n_{\ell}$ is understood as $1$. 

\begin{proof}
    Let $\hat{\pi}_{k} \colon \RR^{d} \to \RR^{d-1}$ be the projection which deletes the $k$th coordinate, namely 
    $$\hat{\pi}_{k}
    (i_{1}, \dots, i_{k-1}, i_{k}, i_{k+1}, \dots, i_{d}) = 
    (i_{1}, \dots, i_{k-1},        i_{k+1}, \dots, i_{d}).$$ 
    For every $j \geq 1$, since $\#\left(I\setminus I^{(j)}\right) < \prod_{\ell \neq k} n_{\ell}$, 
    the image $\hat{\pi}_{k}\left(I\setminus I^{(j)}\right)$ 
    has the non-empty complement $\hat{I}^{(j)} := \left( \prod_{\ell \neq k} \{0, 1, \dots, n_{\ell}-1\} \right)\setminus \hat{\pi}_{k}\left(I\setminus I^{(j)}\right)$. 
    Then, consider the $(d-1)$-dimensional non-autonomous IFS generated by $(\hat{I}^{(j)})_{j \geq 1}$.
    Let $\hat J$ denote its non-empty limit set and choose a point $(x_1^*,\ldots,x_{k-1}^*,x_{k+1}^*,\dots,x_d^*)\in\widehat J$. 
    This point has the desired property. 
\end{proof}

To prove the connectedness of the limit set, 
we utilize the following concept. 

\begin{definition}\label{def:adjascent}
    For every $\ell \geq 1$, let $u, v \in I^\ell$. 
    We say that $u$ and $v$ are adjacent if $f_u(X) \cap f_v(X)$ has topological dimension $(d-1)$. 
\end{definition}

For the case of $\ell = 1$, 
two symbols $\mathbf{i}, \mathbf{i}' \in I = \prod_{k = 1}^d \{0, 1, \dots, n_k - 1\}$ are adjacent
if and only if they differ by $1$ in exactly one component and are identical in all other components. 
For instance, 
$\mathbf{i} = (0, 0, \dots, 0)$ and $\mathbf{i}' = (1, 0, \dots, 0)$ are adjacent, 
$\mathbf{i}$ and $\mathbf{i}'' = (0, 1, 0,  \dots, 0)$ are adjacent, but 
$\mathbf{i}'$ and $\mathbf{i}''$ are not adjacent since 
$$f_{\mathbf{i}'}(X) \cap f_{\mathbf{i}''}(X) = \{1/n_1\} \times \{1/n_2\} \times [0,1/n_3]\times \dots \times [0, 1/n_d]$$
is $(d-2)$-dimensional. 

We now show the (local) connectedness of non-autonomous fractal cubes. 

\begin{theorem}\label{th:connectedSq}
    Suppose that  $\#\left(I\setminus I^{(j)}\right) < d$  for every $j \geq 1$. 
    Then the limit set $J$ of $(\Phi^{(j)})_{j \geq 1}$ is connected, locally connected, and path-connected. 
\end{theorem}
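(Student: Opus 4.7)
My plan is to prove both connectedness and local connectedness of $J$ simultaneously by showing that the shifted limit set $J_j$ is connected for \emph{every} $j \geq 1$. Once this is established, connectedness of $J = J_1$ is immediate, and local connectedness follows from Proposition \ref{prop:localConn}. By Corollary \ref{cor:Hata} applied to the shifted non-autonomous IFS $(\Phi^{(j+n-1)})_{n \geq 1}$, connectedness of $J_j$ is equivalent to connectedness of the nerve $\mathcal{N}_{j,k}$ for every $k > j$.

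I would further reduce to the one-step nerves $\mathcal{N}_{\ell,\ell+1}$ via Lemma \ref{lem:induction}. Applied to the shifted IFS, it implies by induction on $k$ that $\mathcal{N}_{j,k}$ is connected whenever every $\mathcal{N}_{\ell,\ell+1}$ with $j \leq \ell < k$ is connected. Combined with the base case $\mathcal{N}_{j,j+1}$, the entire problem reduces to proving that $\mathcal{N}_{j,j+1}$ is connected for every $j \geq 1$.

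For this one-step case, I would exhibit a connected spanning subgraph of the $1$-skeleton of $\mathcal{N}_{j,j+1}$. The hypothesis gives $\#(I \setminus I^{(j)}) \leq d - 1$, and since each $n_\ell \geq 2$ one easily checks $\prod_{\ell \neq k} n_\ell \geq d$ for every $k$. Hence Lemma \ref{lem:coreLine} applied to $(\Phi^{(j+n)})_{n \geq 1}$ produces, for each direction $k$, an axis-parallel segment $L_k$ inside $J_{j+1}$ spanning $[0,1]$ in the $k$-th coordinate. A short coordinate computation then shows that whenever $\mathbf{i}, \mathbf{i}' \in I^{(j)}$ are adjacent in the sense of Definition \ref{def:adjascent}, so that $\mathbf{i}$ and $\mathbf{i}'$ differ by $1$ in exactly one coordinate (say the $k$-th), the scaled copies $f_{\mathbf{i}}(L_k)$ and $f_{\mathbf{i}'}(L_k)$ abut at a common point on the shared face of the two small cubes; thus $\{\mathbf{i},\mathbf{i}'\}$ is a $1$-simplex of $\mathcal{N}_{j,j+1}$. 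Consequently, the $1$-skeleton of $\mathcal{N}_{j,j+1}$ contains the subgraph of the Cartesian-product grid graph $\prod_{k=1}^d P_{n_k}$ induced on $I^{(j)}$.

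To finish, I would invoke the standard combinatorial fact that $\prod_{k=1}^d P_{n_k}$ is $d$-vertex-connected when each $n_k \geq 2$, so deleting any set of fewer than $d$ vertices leaves a connected graph; applying this to $I \setminus I^{(j)}$ yields connectedness of $\mathcal{N}_{j,j+1}$. The case $d = 1$ is trivial because the hypothesis then forces $I^{(j)} = I$ and $J = [0,1]$. The main obstacle is this closing combinatorial input: I would either cite the $d$-connectedness of $\prod P_{n_k}$ from standard references (via Menger's theorem together with the Cartesian-product bound $\kappa(G_1 \square G_2) \geq \kappa(G_1) + \kappa(G_2)$) or give a self-contained induction on $d$ exploiting that a corner vertex has exactly $d$ neighbors and that any alleged cut of size $\leq d-1$ misses at least one neighbor of each vertex.
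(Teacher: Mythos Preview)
Your proposal is correct and follows essentially the same line as the paper: reduce via Lemma~\ref{lem:induction} to the connectedness of each one-step nerve $\mathcal{N}_{j,j+1}$, use Lemma~\ref{lem:coreLine} to produce an axis-parallel segment in $J_{j+1}$ showing that adjacent indices in $I^{(j)}$ form a $1$-simplex, and invoke Proposition~\ref{prop:localConn} for local connectedness. The only difference is that the paper treats the final combinatorial step informally (the pieces ``intersect in a chain-like manner''), whereas you explicitly identify and justify it as the $d$-vertex-connectivity of the grid graph $\prod_{k} P_{n_k}$.
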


\begin{proof}
    We prove that the simplicial complex $\mathcal{N}_{0, 1}$ is connected. 
    The proof also shows that the simplicial complex $\mathcal{N}_{j, j+1}$ is connected for every $j \geq 0$, 
    and it follows from Corollaries~\ref{cor:SuffConn} and \ref{cor:CompIsPathCon} 
    that $J_j$ is connected, locally connected, and path-connected.  
    
    Consider the covering $\{f_{\mathbf{i}}(X)\}_{\mathbf{i} \in I}$ of $X$, 
    and call each set $f_{\mathbf{i}}(X)$ a piece. 
    Then every piece intersects at least $d$ pieces. 
    For example, if $\mathbf{0} = (0, \dots, 0)$, 
    then the piece $f_{\mathbf{0}}(X)$ intersects each piece $f_{\mathbf{i}}(X)$ for which $\mathbf{0}$ and $\mathbf{i}$ are adjacent. 
    
    Fix $\mathbf{i} \in I^{(1)}$. 
    Since $\#\left(I\setminus I^{(1)}\right) < d$, 
    there exists $\mathbf{i}' \in I^{(1)}$ which is adjacent to $\mathbf{i}$. 
    Let $k \in\{1, 2, \dots, d\}$ be the unique index such that $\mathbf{i}$ and $\mathbf{i}'$ differ in their $k$th coordinate. 
    Since $n_{\ell} \geq 2$ for every $\ell = 1, 2, \dots, d$, we have 
    $\#\left(I\setminus I^{(j)}\right) < d  \leq \prod_{\ell \neq k} n_{\ell}$ for every $j \geq 2$. 
    It follows from  Lemma \ref{lem:coreLine} that for every $\ell \neq k$, there exist $x^{*}_{ \ell} \in [0, 1]$ 
    such that $J_{1} \supset  \{x^{*}_{1}\} \times \dots \times \{x^{*}_{k-1}\} \times [0, 1] \times 
    \{x^{*}_{k+1}\} \times \dots \times \{x^{*}_{d}\}$. 
    Then the images $\pi_{k} \circ f_{\mathbf{i}}(J_{1})$ and $\pi_{k} \circ f_{\mathbf{i}'}(J_{1})$, under the $k$th coordinate projection $\pi_{k}$, are consecutive intervals of length $1/n_{k}$. 
    In particular, $f_{\mathbf{i}}(J_{1}) \cap f_{\mathbf{i}'}(J_{1}) \neq \emptyset$. 
    
    Since adjacent pieces of $\{f_{\mathbf{i}}(J_{1})\} _{\mathbf{i} \in I^{(1)}}$ intersect in a chain-like manner, 
    the simplicial complex $\mathcal{N}_{0, 1}$ is connected.
    This completes the proof. 
\end{proof}

The assumption $\#\left(I\setminus I^{(j)}\right) < d$ is sharp in the almost-sure setting when $d=2$, as will be shown in Subsection~\ref{ssec:hom}.

\subsection{Total disconnectedness}
Corollary \ref{cor:totDisc} is often helpful in proving total disconnectedness, 
but for the fractals of Definition \ref{ex:fracSq}, 
we prefer to use a more straightforward argument to show that the limit set is totally disconnected. 

\begin{definition}\label{def:cut}
    Let $I' \subset I$. 
    We say that $I'$ has a cut normal to the $x_{k}$-axis if 
    there exists $i_{k} \in \{0, 1, \dots, n_{k} -1 \}$ such that  $i_{k} \notin \pi_{k}(I').$
    Here, ${\pi}_{k} \colon \RR^{d} \to \RR$ is the projection onto the $k$th coordinate. 
\end{definition}

\begin{lemma}\label{lem:cut}
    Suppose that there exists $j \geq 1$ such that $I^{(j)}$ has a cut normal to the $x_{k}$-axis for some $k =1, 2, \dots, d$. 
    Then for every $C \in \Con(J)$, we have 
    $$\diam( \pi_{k}(C)) \leq \frac{n_{k} - 1}{n_{k}^{j}}.$$ 
\end{lemma}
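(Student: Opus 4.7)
The plan is to exhibit a uniformly spaced family of gaps in the projection $\pi_k(J) \subset [0,1]$ created by the cut at stage $j$, and then exploit the fact that $\pi_k(C)$, being the continuous image of a connected set, must be an interval lying entirely within one of the maximal gap-free segments.

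First, I would iterate Lemma \ref{lem:Huchinson} to obtain
$$J \;\subset\; \bigcup_{(\mathbf{i}_1, \dots, \mathbf{i}_j) \,\in\, I^{(1)} \times \cdots \times I^{(j)}} f_{\mathbf{i}_1} \circ \cdots \circ f_{\mathbf{i}_j}(X).$$
Applying $\pi_k$ shows that $\pi_k(J)$ is contained in the union of the closed intervals $\bigl[a,\, a + 1/n_k^j\bigr]$ with $a = \sum_{\ell=1}^{j} \pi_k(\mathbf{i}_\ell)/n_k^\ell$ and $\mathbf{i}_\ell \in I^{(\ell)}$. The cut hypothesis supplies $i_k^* \in \{0, \dots, n_k-1\}$ with $i_k^* \notin \pi_k(I^{(j)})$, so the $j$-th $n_k$-adic digit of every admissible $a$ differs from $i_k^*$.

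Next, I would verify that for each $p \in \{0, 1, \dots, n_k^{j-1} - 1\}$ the open interval
$$G_p \;:=\; \left(\frac{p}{n_k^{j-1}} + \frac{i_k^*}{n_k^j},\ \frac{p}{n_k^{j-1}} + \frac{i_k^* + 1}{n_k^j}\right)$$
is disjoint from $\pi_k(J)$: any admissible interval $[a, a + 1/n_k^j]$ that meets $G_p$ would force $a = p/n_k^{j-1} + i_k^*/n_k^j$, whose $j$-th digit equals $i_k^*$, contradicting the previous paragraph. Since the gaps $\{G_p\}$ are evenly spaced at intervals of $1/n_k^{j-1}$, each maximal closed subinterval of $[0,1] \setminus \bigcup_p G_p$ has length at most $\tfrac{1}{n_k^{j-1}} - \tfrac{1}{n_k^j} = \tfrac{n_k-1}{n_k^j}$ (with shorter segments possibly occurring at the endpoints $0$ and $1$).

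Finally, since $C$ is connected and $\pi_k$ is continuous, $\pi_k(C)$ is a connected subset of $\pi_k(J) \subset [0,1] \setminus \bigcup_p G_p$ and hence an interval contained in one such maximal gap-free segment, giving $\diam(\pi_k(C)) \leq (n_k-1)/n_k^j$. The main step requiring care is the elementary $n_k$-adic bookkeeping that certifies $G_p \cap \pi_k(J) = \emptyset$; once this is done, the diameter bound follows immediately from connectedness.
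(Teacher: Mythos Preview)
Your proposal is correct and follows essentially the same route as the paper: identify the evenly spaced open gaps $G_p$ in $[0,1]$ that $\pi_k(J)$ must avoid because of the missing $j$-th digit $i_k^*$, then use connectedness of $\pi_k(C)$ to confine it to a single gap-free segment of length at most $(n_k-1)/n_k^j$. The only cosmetic difference is that the paper phrases the gap argument by noting that $\pi_k(J)$ is the limit set of the projected one-dimensional non-autonomous IFS $(\pi_k(I^{(\ell)}))_{\ell\ge 1}$, whereas you do the equivalent $n_k$-adic bookkeeping directly via Lemma~\ref{lem:Huchinson}.
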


\begin{proof}
    If $j = 1$, then the inequality is trivial. 
    Thus, we assume that $I^{(j)}$ has a cut normal to the $x_{k}$-axis for $j \geq 2$. 
    
    Let $\tilde{I}^{(j)} = \pi_{k}(I^{(j)})$ be a subset of $\{0, 1, \dots, n_{k}-1\}$ for every $j \geq 1$. 
    By definition, $\pi_{k}(J)$ is equal to the limit set of one-dimensional non-autonomous IFS generated by $(\tilde{I}^{(j)})_{j \geq 1}$. 
    Thus, if $i_{k} \notin \tilde{I}^{(j)}$, then $\pi_{k}(J)$ does not intersect the open interval 
    $$\left(
    \frac{i_{k}}{n_{k}^{j}}   + \frac{m}{n_{k}^{j-1}}, 
    \frac{i_{k}+1}{n_{k}^{j}} + \frac{m}{n_{k}^{j-1}}
    \right)$$
    for every $m =0, 1, \dots, n_{k}^{j-1}-1$. 
    For every $C \in \Con(J)$, the image $\pi_{k}(C)$ is also a connected subset of $\RR$, 
    and hence $\pi_{k}(C)$ is contained in a closed interval of length at most $(n_{k} - 1)/{n_{k}^{j}}$. 
    This completes the proof. 
\end{proof}

\begin{cor}\label{cor:totDiscSq}
    If $\# \{j \geq 1 \colon I^{(j)} \text{ has a cut normal to the $x_{k}$-axis}\} = \infty$  for every $k =1, 2, \dots, d$, 
    then the limit set $J$ is totally disconnected. 
\end{cor}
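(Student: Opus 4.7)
The plan is to deduce the result as a direct consequence of Lemma \ref{lem:cut}, applied coordinate by coordinate. Fix an arbitrary connected component $C \in \Con(J)$ and an arbitrary coordinate direction $k \in \{1, 2, \dots, d\}$. By hypothesis, the set of indices $j \geq 1$ for which $I^{(j)}$ has a cut normal to the $x_k$-axis is infinite, so I can extract a sequence $j_1 < j_2 < \cdots$ of such indices tending to $\infty$.

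Next I would apply Lemma \ref{lem:cut} along this sequence: for each $j_m$, one obtains
\[
\diam(\pi_k(C)) \leq \frac{n_k - 1}{n_k^{j_m}}.
\]
Since $n_k \geq 2$, the right-hand side tends to $0$ as $m \to \infty$, forcing $\diam(\pi_k(C)) = 0$. Because $\pi_k(C) \subset \RR$ is connected (as the continuous image of the connected set $C$), this means $\pi_k(C)$ is a single point. Repeating the argument for every $k = 1, 2, \dots, d$ shows that each projection of $C$ onto a coordinate axis is a single point.

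Finally, since the coordinate projections separate points of $\RR^d$, the component $C$ must itself consist of a single point. As $C \in \Con(J)$ was arbitrary, every connected component of $J$ is a singleton, which is precisely the definition of $J$ being totally disconnected. No step appears to present serious difficulty; the whole argument is essentially a packaging of the quantitative estimate in Lemma \ref{lem:cut}, so the only thing to watch is to make sure the infinitely-many-cuts hypothesis is used separately for each of the $d$ directions before concluding.
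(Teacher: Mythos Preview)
Your proof is correct and follows exactly the same approach as the paper's own proof, which simply says that $\diam(\pi_k(C))=0$ for each $k$ by Lemma~\ref{lem:cut} and hence $C$ is a singleton. You have written out in full the details that the paper leaves implicit.
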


\begin{proof}
    For every $C \in \Con(J)$ and $k =1, 2, \dots, d$, we have $\diam( \pi_{k}(C)) = 0$ by Lemma \ref{lem:cut}. 
    Thus, $C$ must be a singleton.  
\end{proof}

Note that the limit set can be totally disconnected even if $I^{(j)}$ does not have cuts for every $j \geq 1$. See \cite{Cristea}. 

\subsection{Connected components of randomly generated non-autonomous fractal cubes}\label{ssec:random}
\begin{definition}\label{def:randomSq}
    Let $r \in \NN$ such that $0 \leq r \leq (\prod_{k=1}^{d}n_{k}) -1$. 
    Denote by $\mathcal{P}_{r}(I)$ the set of all subsets $I'$ of $I$ such that $\#(I\setminus I') = r$. 
\end{definition}

The following theorem provides a partial answer to a non-autonomous higher-dimensional analog of the Mandelbrot percolation problem. See Subsection~\ref{sssec:perc} for the original problem.

\begin{theorem}\label{th:randomSq}
    Let $r \in \NN$ such that $0 \leq r \leq (\prod_{k=1}^{d}n_{k}) -1$. 
    Suppose that each $I^{(j)}$ is randomly chosen independently according to the uniform distribution on $\mathcal{P}_{r}(I)$. 
    Then the limit set $J$ of $(\Phi^{(j)})_{j=1}^{\infty}$ satisfies the following. 
    \begin{enumerate}
        \item If $r < d$, then $J$ is connected and locally connected. 
        \item If there exists $k =1, 2, \dots, d$ such that $r < \prod_{\ell \neq k} n_{\ell}$, 
        then $J$ contains a line segment which connects the face of the unit $d$-cube defined by $x_k = 0$ to the face $x_k = 1$. 
        \item If there exists $k =1, 2, \dots, d$ such that $r \geq \prod_{\ell \neq k} n_{\ell}$, then almost surely, for every $C \in \Con(J)$, we have $\diam( \pi_{k}(C)) = 0$. 
        \item If $r \geq \prod_{\ell \neq k} n_{\ell}$ for every $k =1, 2, \dots, d$, then $J$ is totally disconnected almost surely. 
    \end{enumerate}
\end{theorem}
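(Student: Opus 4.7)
The plan is to address the four claims in order, using the deterministic lemmas from this section (Lemmas \ref{lem:corePlane}, \ref{lem:coreLine}, \ref{lem:cut}, and Theorem \ref{th:connectedSq}) for parts (1)--(2), and a Borel--Cantelli argument for parts (3)--(4). Observe first that the hypotheses of parts (1) and (2) involve only the deterministic parameter $r$, not the random choice of $I^{(j)}$: since $\#(I \setminus I^{(j)}) = r$ holds for every $j$ by construction, those parts are actually pointwise (not merely almost sure) statements.

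For claim (1), since $r < d$ we have $\#(I \setminus I^{(j)}) < d$ for every $j$, so Theorem \ref{th:connectedSq} directly gives the connectedness and local connectedness of $J$. For claim (2), the hypothesis $r < \prod_{\ell \neq k} n_\ell$ is exactly what is needed to apply Lemma \ref{lem:coreLine} to every $j$; the conclusion of that lemma produces a line segment $\{x_1^*\} \times \cdots \times [0,1] \times \cdots \times \{x_d^*\} \subset J$, which connects the face $x_k = 0$ to the opposite face $x_k = 1$.

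For claim (3), fix $k$ with $r \geq m_k := \prod_{\ell \neq k} n_\ell$. Define the event
\[
E_j := \bigl\{ I^{(j)} \text{ has a cut normal to the } x_k\text{-axis} \bigr\}.
\]
Since the $I^{(j)}$ are i.i.d., the events $E_j$ are i.i.d., and a direct count gives
\[
P(E_1) \;\geq\; \frac{\binom{\#I - m_k}{\,r - m_k\,}}{\binom{\#I}{r}} \;>\; 0,
\]
the right-hand side being obtained by requiring that $I \setminus I^{(1)}$ contains the specific slice $\{\mathbf{i} \in I : i_k = 0\}$, which has cardinality $m_k$ and is possible precisely because $r \geq m_k$. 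By the second Borel--Cantelli lemma, $E_j$ occurs for infinitely many $j$ almost surely. On this almost sure event, applying Lemma \ref{lem:cut} along a subsequence $j \to \infty$ forces $\diam(\pi_k(C)) = 0$ for every $C \in \Con(J)$. Claim (4) then follows by intersecting the almost sure events from claim (3) applied to each $k = 1, \dots, d$: off a finite union of null sets, every component has trivial projection in every coordinate direction, hence is a single point, so $J$ is totally disconnected.

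There is no serious obstacle; the only nontrivial step is confirming that the cut-probability $P(E_1)$ is strictly positive under the hypothesis of (3), which reduces to elementary counting on $\mathcal{P}_r(I)$. Everything else is a direct application of the lemmas already established in this section combined with standard probabilistic tools.
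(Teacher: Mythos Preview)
Your proposal is correct and follows essentially the same approach as the paper: Theorem \ref{th:connectedSq} for (1), Lemma \ref{lem:coreLine} for (2), and for (3)--(4) the same Borel--Cantelli argument based on the positive probability of a cut normal to the $x_k$-axis, combined with Lemma \ref{lem:cut} (the paper packages the last step via Corollary \ref{cor:totDiscSq}, which is exactly your projection argument). Your explicit binomial lower bound for $P(E_1)$ and your remark that (1)--(2) are deterministic rather than almost-sure are minor elaborations, not departures.
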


\begin{proof}
    The first item is a consequence of Theorem~\ref{th:connectedSq}. 
    The second item is due to Lemma~\ref{lem:coreLine}. 
    Suppose that $r \geq \prod_{\ell \neq k} n_{\ell}$. 
    Then, for instance, $I' =\{(i_1, \dots, i_d) \in I \colon i_k \neq 0\}$ has a cut normal to the $x_k$-axis and $\#(I \setminus I') = \prod_{\ell \neq k} n_{\ell}$. 
    Therefore, for every $j \geq 1$, with positive probability, the set $I^{(j)}$ has a cut normal to the $x_k$-axis. 
    The second Borel-Cantelli lemma implies that such events happen infinitely often, and hence $\diam( \pi_{k}(C)) = 0$ by Lemma ~\ref{lem:cut}. 
    Moreover, suppose that $r \geq \prod_{\ell \neq k} n_{\ell}$ for every $k =1, 2, \dots, d$. 
    Then, with probability one, for every $k =1, 2, \dots, d$, the set $I^{(j)}$ has a cut normal to the $x_{k}$-axis infinitely often. 
    It follows from Corollary \ref{cor:totDiscSq} that $J$ is totally disconnected almost surely.
\end{proof}

Theorem \ref{th:randomSq} is in contrast with the $n_1 \times n_2$ Mandelbrot  percolation, whose horizontal critical value is equal to the vertical one \cite{FF}, for the case $d = 2$ and $n_1 \neq n_2$.

\subsection{Homology groups for randomly generated non-autonomous fractal rectangles}\label{ssec:hom}
We now consider the two-dimensional case.
Our main interest lies in the first homology.

\begin{figure}[ht]
    \centering
    \includegraphics[width=0.4\linewidth]{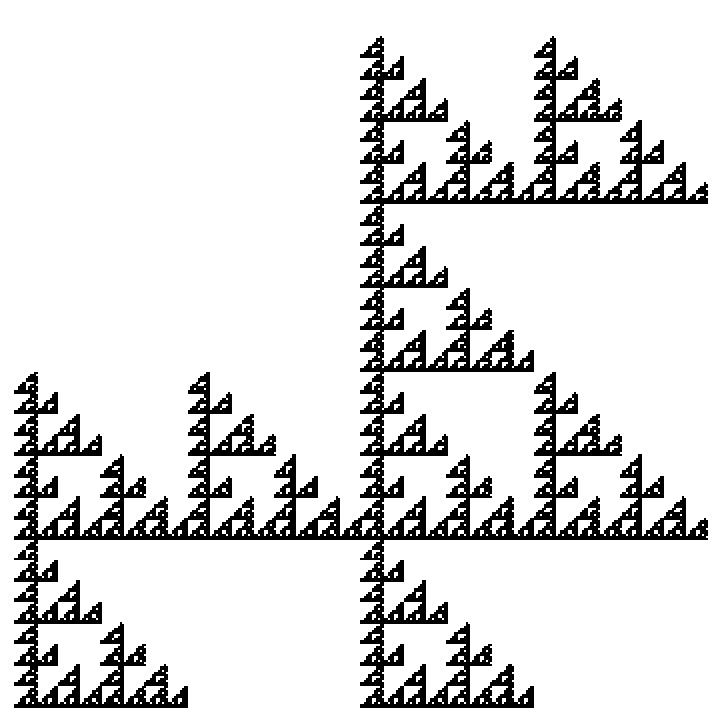}
    \begin{tikzpicture}[scale=1.2]
    	\draw (0,1)--(0,2)--(1, 2);
    	\node (00) at (0,0) [below=18pt, right=12pt] {$\mathcal{N}_{1, 2}$};
    	\coordinate (00) at (0,1) node at (00) [below] {};
    	\coordinate (10) at (0,2) node at (10) [below] {};
        \coordinate (11) at (1,2) node at (11){}; 
    	\fill (00) circle (2pt) (10) circle (2pt) (11) circle (2pt);
        \draw (5, 3)--(5, 2);
        \draw (5, 3)--(6, 3);
        \draw (5, 3)--(5, 2)--(5, 1)--(5, 0);
        \draw (3, 1)--(3+0,0);
        \draw (3, 1)--(4, 1);
        \draw (4, 1)--(4,1)--(5, 1)--(6, 1);
    	\node (00) at (5,0) [below=12pt] {$\mathcal{N}_{0, 2}$};
    	\coordinate (0000) at (3,0) node at (0000) [below] {};
        \coordinate (1000) at (5,0) node at (1000) [below] {};
        \coordinate (0001) at (3,1) node at (0001) [below] {};
        \coordinate (0101) at (4,1) node at (0101) [below] {};
        \coordinate (1001) at (5,1) node at (1001) [below] {};
        \coordinate (1101) at (6,1) node at (1101) [below] {};
        \coordinate (1010) at (5,2) node at (1010) [below] {};
        \coordinate (1011) at (5,3) node at (1011) [below] {};
        \coordinate (1111) at (6,3) node at (1111) [below] {};
    	\fill 
        (0000) circle (2pt) 
        (1000) circle (2pt)
        (0001) circle (2pt) 
        (0101) circle (2pt) 
        (1001) circle (2pt)
        (1101) circle (2pt) 
        (1010) circle (2pt) 
        (1011) circle (2pt)
        (1111) circle (2pt) ;
        \draw (9, 3)--(9, 2);
        \draw (9, 3)--(10, 3);
        \draw (9, 3)--(9, 2);
        \draw[dashed] (9, 2)--(9, 1);
        \draw (9, 1)--(9, 0);
        \draw (7, 1)--(7,0);
        \draw (7, 1)--(8, 1);
        \draw[dashed] (8, 1)--(9, 1);
        \draw (9, 1)--(10, 1);
    	\node (m00) at (9,0) [below=12pt] {$\mathcal{M}_{0, 1, 2}$};
    	\coordinate (m0000) at (7,0) node at (m0000) [below] {};
        \coordinate (m1000) at (9,0) node at (m1000) [below] {};
        \coordinate (m0001) at (7,1) node at (m0001) [below] {};
        \coordinate (m0101) at (8,1) node at (m0101) [below] {};
        \coordinate (m1001) at (9,1) node at (m1001) [below] {};
        \coordinate (m1101) at (10,1) node at (m1101) [below] {};
        \coordinate (m1010) at (9,2) node at (m1010) [below] {};
        \coordinate (m1011) at (9,3) node at (m1011) [below] {};
        \coordinate (m1111) at (10,3) node at (m1111) [below] {};
    	\fill 
        (m0000) circle (2pt) 
        (m1000) circle (2pt)
        (m0001) circle (2pt) 
        (m0101) circle (2pt) 
        (m1001) circle (2pt)
        (m1101) circle (2pt) 
        (m1010) circle (2pt) 
        (m1011) circle (2pt)
        (m1111) circle (2pt) ;
    \end{tikzpicture}
    \caption{
    An example of a fractal square (above) 
    and the nerves and $(0, 1, 2)$-subcomplex (below) where $d =2$, $n_1 = 2$, $n_2 = 2$, and $r=1$. 
    In the bottom-right figure, 
    the dashed lines are elements of $\mathcal{N}_{0, 2}\setminus\mathcal{M}_{0, 1, 2}$ while 
    the solid lines are elements of $\mathcal{M}_{0, 1, 2}$. 
    Observe that each dashed line represents a relative homology class of $H _1(\mathcal{N}_{0, 2}, \mathcal{M}_{0, 1, 2})$. }
    \label{fig:2by2-1}
\end{figure}

\begin{definition}\label{def:corner}
    Suppose $d = 2$.
    We say that the non-autonomous IFS $(\Phi^{(j)})_{j=1}^{\infty}$ satisfies the no-corner condition if
    for every $j \geq 0$, the limit set $J_j$ of $(\Phi^{(k+j)})_{k=1}^{\infty}$ contains none of the four points $(0, 0)$, $(1, 0)$, $(0, 1)$, $(1, 1)$ of $X = [0, 1]^2$. 
\end{definition}

\begin{lemma}\label{lem:nocorner}
    Suppose $d = 2$ and $r \geq 1$. 
    Suppose that the sets $I^{(j)}$ are chosen independently according to the uniform distribution on $\mathcal{P}_{r}(I)$. 
    Then almost surely the non-autonomous IFS  $(\Phi^{(j)})_{j=1}^{\infty}$ satisfies the no-corner condition. 
\end{lemma}

\begin{proof}
    Since the event that $I^{(j)} \ni (0, 0)$ for every $j \geq 1$ has probability zero, 
    the limit set $J$ does not contain the point $(0, 0) \in [0, 1]^2$ with probability one. 
    Similarly, with probability one, 
    the limit set $J$ does not contain the other corners $(0,1)$, $(1, 0)$, and $(1,1)$. 
    The same holds for $J_j$ for every $j \geq 1$.  
    Thus, with probability one, $(\Phi^{(j)})_{j=1}^{\infty}$ satisfies the no-corner condition. 
\end{proof}

We now calculate the growth rate of the rank of $H_0$.

\begin{theorem}\label{th:H0Growth}
    Suppose that $d=2$ and $2\le r<\min\{n_1,n_2\}.$
    Suppose that each \(I^{(j)}\) is chosen independently according to
    the uniform distribution on $\mathcal P_r(I)$.
    Then, almost surely,
    \[
    \lim_{k\to\infty}
    \frac{1}{k}
    \log\operatorname{rank}H_0(\mathcal N_{0,k})
    =
    \log (n_1n_2-r).
    \]
\end{theorem}

The proof uses an elementary renewal argument.

\begin{proof}
    By Lemma~\ref{lem:nocorner}, we may restrict attention to
    realizations satisfying the no-corner condition.
    
    We consider the event that a specific configuration happens. 
    For every \(s\ge1\), let \(B_s\) be the event that
    \[
    \begin{aligned}
     &(0,0)\in I^{(2s-1)},\qquad 
     (1,0),(0,1)\notin I^{(2s-1)},\\
     &(n_1-1,n_2-1)\in I^{(2s)},\quad
     (n_1-2,n_2-1),(n_1-1,n_2-2)\notin I^{(2s)}.
    \end{aligned}
    \]
    The events \((B_s)_{s\ge1}\) are independent and identically
    distributed with positive probability.
    
    Suppose that \(B_s\) occurs and put $a:=(0,0), b:=(n_1-1,n_2-1)\in I$. 
    We claim that, for every $v\in V(\mathcal N_{0,2s-2})$,
    the vertex $vab\in V(\mathcal N_{0,2s})$ is isolated.
    Indeed, the rectangle $f_{vab}(X)$ is located at the upper-right
    corner of $f_{va}(X)$. Its left and lower edge-neighbors inside
    $f_{va}(X)$ correspond respectively to
    \[
    v a(n_1-2,n_2-1)
    \quad\text{and}\quad
    v a(n_1-1,n_2-2),
    \]
    and these words are absent by the definition of \(B_s\). Its right
    and upper edge-neighbors lie respectively inside the rectangles
    corresponding to
    \[
    v(1,0)
    \quad\text{and}\quad
    v(0,1),
    \]
    which are also absent.
    Thus, \(f_{vab}(X)\) has no retained edge-neighbor at level $2s$.
    Any remaining intersection with another level-$2s$ rectangle
    could occur only at a corner. Such an intersection is excluded by
    the no-corner condition. It follows that $vab$ is an isolated
    vertex of $\mathcal N_{0,2s}$.
    
    Since $\#V(\mathcal N_{0,2s-2})=(n_1n_2-r)^{2s-2}$,
    we obtain $\mathrm{rank}H_0(\mathcal N_{0,2s})
    \geq (n_1n_2-r)^{2s-2}$ whenever $B_s$ occurs.
    For $k\geq1$, define
    \[
    T_k
    :=
    \max\left\{
    s\leq \left\lfloor\frac{k}{2}\right\rfloor:B_s\text{ occurs}
    \right\},
    \]
    with the convention that $T_k=0$ if none of the events
    \(B_1,\ldots,B_{\left\lfloor\frac{k}{2}\right\rfloor}\) occurs.
    Put $\tau_k:=2T_k$.
    
    We claim that
    \[
    \frac{\tau_k}{k}\longrightarrow1
    \qquad\text{almost surely}.
    \]
    Indeed, let $p:=\mathbb P(B_1)$>0. 
    For every \(\varepsilon\in(0,1)\), the event $T_k\le (1-\varepsilon)k/2$ 
    implies that none of the events \(B_s\) with $(1-\varepsilon)k/2<s\leq \left\lfloor k/2\right\rfloor$
    occurs. 
    Since the events \(B_s\) are independent, we have
    \[
    \mathbb P\bigl(T_k\le(1-\varepsilon)\frac{k}{2}\bigr)
    \le
    (1-p)^{\lfloor\varepsilon \frac{k}{2}\rfloor}.
    \]
    Therefore,
    \[
    \sum_{k=1}^{\infty}
    \mathbb P\bigl(T_k\le(1-\varepsilon)\frac{k}{2}\bigr)
    <\infty.
    \]
    By the Borel-Cantelli lemma, almost surely, 
    we have $T_k>(1-\varepsilon)k/2$
    for all sufficiently large \(k\). 
    Since $T_k\le k/2$, it follows that
    \[
    \frac{T_k}{k/2}\longrightarrow1
    \qquad\text{almost surely}.
    \]
    Consequently, $\tau_k/k\to 1$ almost surely. 
        
    By Lemma~\ref{lem:ComponentsSurjective}, the rank of $H_0(\mathcal N_{0,k})$ is non-decreasing as $k$ grows.
    Thus, for all sufficiently large \(k\),
    \[
    \mathrm{rank}H_0(\mathcal N_{0,k})
    \geq
    \mathrm{rank}H_0(\mathcal N_{0,\tau_k})
    \geq
    (n_1n_2-r)^{\tau_k-2}.
    \]
    It follows that almost surely
    \[
    \begin{aligned}
    \liminf_{k\to\infty}
    \frac{1}{k}
    \log\operatorname{rank}H_0(\mathcal N_{0,k})
    &\ge
    \liminf_{k\to\infty}
    \frac{\tau_k-2}{k}\log (n_1n_2-r)
    =\log (n_1n_2-r).
    \end{aligned}
    \]
    
    On the other hand, 
    \[
    \operatorname{rank}H_0(\mathcal N_{0,k})
    \le
    \#V(\mathcal N_{0,k})
    =
    (n_1n_2-r)^k,
    \]
    and hence
    \[
    \limsup_{k\to\infty}
    \frac{1}{k}
    \log\operatorname{rank}H_0(\mathcal N_{0,k})
    \le
    \log (n_1n_2-r).
    \]
    Combining the two inequalities gives
    \[
    \lim_{k\to\infty}
    \frac{1}{k}
    \log\operatorname{rank}H_0(\mathcal N_{0,k})
    =
    \log(n_1n_2-r)
    \]
    almost surely.
\end{proof}

\begin{lemma}\label{lem:H2vanish}
    Suppose $d = 2$ and that $(\Phi^{(j)})_{j=1}^{\infty}$ satisfies the no-corner condition. 
    Then, for every $0 \leq j < k$ and $q \geq 2$, the nerve $\mathcal{N}_{j,k}$ contains no $q$-simplex. 
\end{lemma}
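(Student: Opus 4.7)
The plan is to argue geometrically using the special rectangular structure of the pieces. Fix $1 \leq j < k$ and set $N_{\ell} = n_{\ell}^{k-j}$ for $\ell = 1, 2$. Each piece $f_v(X)$ for $v = (i_j, \dots, i_{k-1}) \in V(\mathcal{N}_{j,k})$ is an axis-aligned closed rectangle of dimensions $1/N_1 \times 1/N_2$, and distinct indices $v$ yield distinct rectangles whose interiors are disjoint. Together these $N_1 N_2$ rectangles tile $X = [0,1]^2$ as a regular grid. The corners of each rectangle $f_v(X)$ are precisely the images under $f_v$ of the four corners of $X$.

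Next, I would establish the following purely combinatorial claim: if $\{f_{v_p}(X)\}_{p=0}^q$ are $q+1 \geq 3$ distinct pieces with $\bigcap_{p=0}^q f_{v_p}(X) \neq \emptyset$, then this intersection consists of a single point $P$ which is a corner of every $f_{v_p}(X)$. Indeed, two axis-aligned rectangles in the grid either are disjoint, share a single corner, or share a full edge. If three distinct grid rectangles shared a common non-corner point, that point would lie in the relative interior of at least one edge of the grid, but then the third rectangle would have to have this point in its interior or be one of the two rectangles meeting along that edge---both impossible. Hence any common point of three or more distinct grid rectangles is a grid vertex, and by construction such a vertex is a corner of each rectangle that contains it.

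Now I apply the no-corner condition. Suppose for contradiction that $\{v_0, \dots, v_q\}$ is a $q$-simplex of $\mathcal{N}_{j,k}$ for some $q \geq 2$. Then $\bigcap_{p=0}^q f_{v_p}(J_k) \neq \emptyset$, and a fortiori $\bigcap_{p=0}^q f_{v_p}(X) \neq \emptyset$, so by the previous step the intersection is a single point $P$ which is a corner of each $f_{v_p}(X)$. For every $p$, the preimage $f_{v_p}^{-1}(P)$ is then a corner of $X$, i.e., one of the four points $(0,0), (1,0), (0,1), (1,1)$. On the other hand, the non-empty intersection in $J_k$ forces $P \in f_{v_p}(J_k)$, hence $f_{v_p}^{-1}(P) \in J_k$. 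This contradicts the no-corner condition for $J_k$, completing the proof.

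The main obstacle is the combinatorial claim in the second paragraph: the argument is elementary but depends on the specific fact that the pieces are axis-aligned rectangles of identical dimensions, an ingredient which is genuinely two-dimensional (in higher dimensions, three axis-aligned cubes can meet along a lower-dimensional face that is not a vertex, so the analogue of this lemma would require a more nuanced formulation). Everything else is a direct translation of the no-corner hypothesis through the injectivity of $f_{v_p}$ on $X$.
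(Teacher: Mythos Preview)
Your argument is correct and follows essentially the same route as the paper's proof: three distinct grid rectangles can only meet at a grid vertex, and the no-corner condition then rules out such a vertex lying in $J_k$. The only cosmetic slip is the phrase ``these $N_1N_2$ rectangles tile $X$''---in general $V(\mathcal{N}_{j,k}) = I^{(j)}\times\cdots\times I^{(k-1)}$ has fewer than $N_1N_2$ elements---but your combinatorial claim only uses that the pieces are among the cells of a regular $N_1\times N_2$ grid, so the argument is unaffected.
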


\begin{proof}
    Suppose
    $$A:= f_{\mathbf{i}_0}(J_1) \cap f_{\mathbf{i}_1}(J_1) \cap f_{\mathbf{i}_2}(J_1) \neq \emptyset$$ for mutually distinct $\mathbf{i}_0, \mathbf{i}_1, \mathbf{i}_2$.  
    Since  $X = \bigcup_{\mathbf{i} \in I}f_{\mathbf{i}}(X)$ and $J_1 \subset X$, 
    the intersection $A$ is a singleton, say $\{x\}$, and $f_{\mathbf{i}_0}^{-1}(x)
    \in J_1$ must be one of the four corners $(0, 0)$, $(1, 0)$, $(0, 1)$, or $(1, 1)$. 
    This contradicts the assumption that $(\Phi^{(j)})_{j=1}^{\infty}$ satisfies the no-corner condition. 
    Thus, the nerve $\mathcal{N}_{0,1}$ contains no $q$-simplex for every $q \geq 2$.
    A similar argument shows that $\mathcal{N}_{j,k}$ contains no $q$-simplex for every $q \geq 2$ and for every $0 \leq j < k$. 
\end{proof}

\begin{lemma}\label{lem:H1surjSquare}
    Suppose $d = 2$, $\# (I\setminus I^{(j)}) < \min\{n_{1}, n_{2}\}$ for every $j \geq 1$, and 
    that $(\Phi^{(j)})_{j=1}^{\infty}$ satisfies the no-corner condition. 
    Then $\phi_* \colon H_1(\mathcal{N}_{j, k+1}) \to H_1(\mathcal{N}_{j, k})$ is surjective for every $0 \leq j < k$. 
\end{lemma}

The proof is similar to that of Lemma~\ref{lem:H1surj}.

\begin{proof}
    We may assume $j=0$.
    Take an arbitrary homology class of $H_1(\mathcal{N}_{0, k})$, and fix a representative $z \in C_1(\mathcal{N}_{0, k})$. 
    Since $\partial z = 0$, 
    without loss of generality, we can write $z = \sum_{p=0}^n \sigma_p$ where $n \in \NN$, $v_{n+1} = v_0$, and 
    $\sigma_p = [v_p, v_{p+1}]$ is an oriented $1$-simplex for every $p = 0, \dots, n$. 
    
    For every $p = 0, \dots, n$, we shall construct 
    a suitable oriented $1$-simplex $\tilde{\sigma}_p\in C_1(\mathcal{N}_{0, k+1})$ such that 
    $\phi_*(\tilde{\sigma}_p) = \sigma_p$. 
    Since $\{v_p, v_{p+1}\} \in\mathcal{N}_{0,k}$, 
    we have $f_{v_{p}}(J_{k}) \cap f_{v_{p+1}}(J_{k}) \neq \emptyset$. 
    By the no-corner condition, the vertices $v_p$ and $v_{p+1}$ are adjacent. 
    More precisely, $f_{v_{p}}(X)$ is a rectangle with width $1/n_1^{k}$ and height $1/n_2^{k}$, and 
    rectangles $f_{v_{p}}(X)$ and $f_{v_{p+1}}(X)$ are placed side by side, either horizontally or vertically.
    Consider, for instance, the case where they are arranged horizontally.
    Since $\# (I\setminus I^{(j)}) < n_2$ for every $j \geq 1$, 
    it follows from Lemma \ref{lem:coreLine} that $J_{k+1}$ contains a horizontal line segment $L$ of length $1$. 
    This $L$ determines $i_p$ and $i'_p \in V(\mathcal{N}_{k, k+1})$ such that 
    $f_{v_{p}i_p}(X)$ and $f_{v_{p+1}i'_p}(X)$ are arranged horizontally, 
    and hence $\{v_{p}i_p, v_{p+1}i'_p\}\in\mathcal{N}_{0,k+1}$. 
    Set $\tilde{\sigma}_p = [v_{p}i_p, v_{p+1}i'_p]\in C_1(\mathcal{N}_{0, k+1})$.
    Then $\phi_*(\tilde{\sigma}_p) = \sigma_p$. 
    Note that, regardless of whether the configuration is vertical or horizontal, there exists a component $\mathcal{C} \in\Con(\mathcal{N}_{k, k+1})$ such that for every $p=0, \dots, n$, the vertices satisfy $i_p\in \mathcal{C}$ and $i'_p \in \mathcal{C}$. 
    
    We next find $\tau_p \in C_1(\mathcal{N}_{0, k+1})$ such that 
    $\partial \tau_p =  [v_{p+1}i_{p+1}] - [v_{p+1}i'_p]$, where we define $i_{n+1}:=i_0$. 
    By the construction of $i_p$ and $i'_p$,  we can join $i_{p+1}$ and $i'_p$ in $\mathcal{C}$. 
    Since $\xi_{v_{p+1}} \colon\mathcal{N}_{k,k+1} \to \mathcal{N}_{0,k+1}$ is simplicial, it induces the desired $\tau_p \in C_1(\mathcal{N}_{0, k+1})$. 
    Then,  $\phi_*(\tau_p) = 0 \in C_1(\mathcal{N}_{0, k})$. 
    
    We then have $$\tilde{z}:=\sum_{p=0}^n (\tilde{\sigma}_p+ \tau_p) \in \ker (\partial\colon C_1(\mathcal{N}_{0, k+1}) \to C_0(\mathcal{N}_{0, k+1}))$$ and $\phi_*(\tilde{z}) = z$, which shows the surjectivity of $\phi_* \colon H_1(\mathcal{N}_{0, k+1}) \to H_1(\mathcal{N}_{0, k})$. 
\end{proof}

By the same Borel-Cantelli argument as in the proof of
Theorem~\ref{th:H0Growth}, we have the following growth of $H_1$. 

\begin{theorem}\label{th:H1growthInf}
    Suppose that $d = 2$ and $1 \leq r < \min\{n_{1}, n_{2}\}$. 
    Suppose that $n_1 > 2$ or $n_2 > 2$.
    Suppose that each $I^{(j)}$ is randomly chosen independently according to the uniform distribution on $\mathcal{P}_{r}(I)$. 
    Then almost surely 
    $$\liminf_{k\to\infty}\frac{1}{k}\log\mathrm{rank} {H}_1(\mathcal{N}_{0,k}) \geq \log (n_1n_2 - r).$$
\end{theorem}

\begin{proof}
    By Lemma \ref{lem:nocorner}, with probability one, $(\Phi^{(j)})_{j=1}^{\infty}$ satisfies the no-corner condition, and hence 
    we work on the probability-one event on which the no-corner condition holds.
    For every $j \geq 0$ and $u, v\in V(\mathcal{N}_{j, j+1})$, 
    we have that $\{u, v\}$ is a $1$-simplex of $\mathcal{N}_{j, j+1}$ if and only if $u$ and $v$ are adjacent. 
    Therefore, the random variables $\{\mathrm{rank} H_1(\mathcal{N}_{j, j+1})\}_{j=0}^\infty$ are independent. 
    
    Since either $n_1>2$ or $n_2>2$, the probability of the event 
    $\mathrm{rank}H_1(\mathcal{N}_{j,j+1})\geq 1$ is positive. 
    For every \(k\geq2\), define
    \[
    \tau_k
    :=
    \max\{0\leq j\leq k-1 : \mathrm{rank}H_1(\mathcal{N}_{j,j+1})\geq 1\},
    \]
    whenever the set on the right-hand side is nonempty.
    By the same Borel-Cantelli argument as in the proof of
    Theorem~\ref{th:H0Growth}, almost surely \(\tau_k\) is defined for
    all sufficiently large \(k\) and
    \[
    \frac{\tau_k}{k}\longrightarrow 1
    \qquad\text{as }k\to\infty.
    \]
    
    Since \(\tau_k+1\leq k\), repeated application of
    Lemma~\ref{lem:H1surjSquare} gives
    \[
    \operatorname{rank}H_1(\mathcal{N}_{0,k})
    \geq
    \operatorname{rank}H_1(\mathcal{N}_{0,\tau_k+1}).
    \]
    We next estimate the right-hand side. By
    Theorem~\ref{th:exact} and Lemma~\ref{lem:H2vanish}, for every
    \(0\leq j\) and \(j+1<\ell\), there is an exact sequence
    $
    0
    \to
    H_1(\mathcal{M}_{j,j+1,\ell})
    \to
    H_1(\mathcal{N}_{j,\ell}).
    $
    Together with Lemma~\ref{lem:homologyOfMjkl}, this implies that
    \[
    \operatorname{rank}H_1(\mathcal{N}_{j,\ell})
    \geq
    \#I^{(j+1)}
    \operatorname{rank}H_1(\mathcal{N}_{j+1,\ell}).
    \]
    Applying this inequality successively with
    \(\ell=\tau_k+1\), we obtain
    \[
    \begin{aligned}
    \operatorname{rank}H_1(\mathcal{N}_{0,\tau_k+1})
    &\geq
    \left(
    \prod_{j=1}^{\tau_k}\#I^{(j)}
    \right)
    \operatorname{rank}H_1(
    \mathcal{N}_{\tau_k,\tau_k+1}
    )\\
    &=
    (n_1n_2-r)^{\tau_k}
    \operatorname{rank}H_1(
    \mathcal{N}_{\tau_k,\tau_k+1}
    ).
    \end{aligned}
    \]
    By the definition of \(\tau_k\), we have
    $\operatorname{rank}H_1(
    \mathcal{N}_{\tau_k,\tau_k+1}
    )\geq1.$
    Consequently,
    $\operatorname{rank}H_1(\mathcal{N}_{0,k})
    \geq
    (n_1n_2-r)^{\tau_k}.$
    Therefore,
    \[
    \begin{aligned}
    \liminf_{k\to\infty}
    \frac{1}{k}
    \log\operatorname{rank}H_1(\mathcal{N}_{0,k})
    \geq
    \liminf_{k\to\infty}
    \frac{\tau_k}{k}\log(n_1n_2-r)
    =
    \log(n_1n_2-r).
    \end{aligned}
    \]
\end{proof}

For $0 \leq j < k < \ell$, 
every $w \in V(\mathcal{N}_{j, \ell})$ is uniquely written as the concatenation $w = uv$ of $u \in V(\mathcal{N}_{j, k})$ and $v \in V(\mathcal{N}_{k, \ell})$.

\begin{lemma}\label{lem:relativeC1}
    Suppose $d = 2$ and that $(\Phi^{(j)})_{j=1}^{\infty}$ satisfies the no-corner condition. 
    Let $0 \leq j < k < \ell$. 
     If $u = u' \in V(\mathcal{N}_{j, k})$ and 
        $\{uv, u'v'\}$ is a $1$-simplex of the nerve $\mathcal{N}_{j,\ell}$, then 
        $\{v, v'\}$ is a $1$-simplex of the nerve $\mathcal{N}_{k,\ell}$. 
\end{lemma}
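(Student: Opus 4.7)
The plan is to reduce the claim to a direct consequence of the injectivity of the affine contractions in Definition \ref{ex:fracSq}. Each generator $f_{\mathbf{i}}$ is an injective affine map on $X$, so every composition $f_u = f^{(j)}_{i_j} \circ \cdots \circ f^{(k-1)}_{i_{k-1}}$ is injective on $X$. This is the only property about the IFS that the proof will require.

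First I would unpack the $1$-simplex hypothesis: by the convention in Definition \ref{def:component}, a $1$-simplex is a set of two \emph{distinct} vertices, so $uv \ne u'v'$. Combined with $u = u'$, this forces $v \ne v'$. Next, by the definition of $\mathcal{N}_{j,\ell}$ as a nerve, the simplex condition reads $f_{uv}(J_\ell) \cap f_{u'v'}(J_\ell) \ne \emptyset$. Factoring $f_{uv} = f_u \circ f_v$ and $f_{u'v'} = f_u \circ f_{v'}$ and using injectivity of $f_u$, we get
$$\emptyset \;\ne\; f_u\bigl(f_v(J_\ell)\bigr) \cap f_u\bigl(f_{v'}(J_\ell)\bigr) \;=\; f_u\bigl(f_v(J_\ell) \cap f_{v'}(J_\ell)\bigr),$$
so $f_v(J_\ell) \cap f_{v'}(J_\ell) \ne \emptyset$. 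Together with $v \ne v'$ this is exactly the statement that $\{v, v'\}$ is a $1$-simplex of $\mathcal{N}_{k,\ell}$.

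There is no genuine obstacle here: the entire argument rests on the elementary identity $f_u(A) \cap f_u(B) = f_u(A \cap B)$ for injective $f_u$. It is worth pointing out that, unlike Lemma \ref{lem:H2vanish}, the no-corner condition of Definition \ref{def:corner} is \emph{not} used in this step. The no-corner hypothesis will instead play its role when one analyzes $1$-simplices $\{uv, u'v'\}$ with $u \ne u'$, i.e.\ the $1$-simplices of $\mathcal{N}_{j,\ell}$ that do \emph{not} lie in any single piece $\xi_u(\mathcal{N}_{k,\ell})$ of the subcomplex $\mathcal{M}_{j,k,\ell}$. The present lemma is the complementary, easier fact: every $1$-simplex with matching $u$-prefix automatically comes from a $1$-simplex of $\mathcal{N}_{k,\ell}$ via the injection $\xi_u$, and hence already lies inside $\mathcal{M}_{j,k,\ell}$.
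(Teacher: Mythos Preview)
Your proof is correct and follows essentially the same approach as the paper: both reduce to the injectivity of $f_u$ to pass from $f_u(f_v(J_\ell)) \cap f_u(f_{v'}(J_\ell)) \neq \emptyset$ to $f_v(J_\ell) \cap f_{v'}(J_\ell) \neq \emptyset$. Your version is in fact slightly more careful, since you explicitly note that $v \neq v'$ (needed for $\{v,v'\}$ to be a genuine $1$-simplex) and correctly observe that the no-corner condition plays no role in this particular step.
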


\begin{proof}
    Suppose $u = u'$ and that
    $\{uv, u'v'\}$ is a $1$-simplex of the nerve $\mathcal{N}_{j,\ell}$. 
    Then $$
    f_u(f_v(J_\ell)) \cap f_{u}(f_{v'}(J_\ell)) = 
    f_u(f_v(J_\ell)) \cap f_{u'}(f_{v'}(J_\ell)) \neq \emptyset.$$ 
    Since $f_u$ is injective, we have $f_v(J_\ell) \cap f_{v'}(J_\ell) \neq \emptyset$, 
    which completes the proof. 
\end{proof}

\begin{lemma}\label{lem:adjacent}
    Suppose $d = 2$ and that $(\Phi^{(j)})_{j=1}^{\infty}$ satisfies the no-corner condition. 
    Let $0 \leq j$ and $j+1 < \ell$. 
    Let $\mathbf{i}, \mathbf{i}' \in V(\mathcal{N}_{j, j+1})$ and $v, v' \in V(\mathcal{N}_{j+1, \ell})$. 
    Suppose that $\mathbf{i} \neq \mathbf{i}'$ and $\{\mathbf{i}v, \mathbf{i}'v'\} \in \mathcal{N}_{j, \ell}$. 
    Then $\mathbf{i}$ and $\mathbf{i}'$ are adjacent.    
    Moreover, for every $v'' \in V(\mathcal{N}_{j+1, \ell})$ with $v'' \neq v'$, we have $\{\mathbf{i}v, \mathbf{i}'v''\} \notin \mathcal{N}_{j, \ell}$.
\end{lemma}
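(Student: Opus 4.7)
The plan is to use the no-corner condition twice---once to force adjacency of $\mathbf{i},\mathbf{i}'$, and once more to force a rigid alignment between $v'$ and $v$. Throughout, I rely on the mechanism of Lemma~\ref{lem:H2vanish}: if an intersection point is forced to be an isolated ``grid corner,'' then pulling it back produces a corner of $X$ inside $J_\ell$, contradicting the no-corner condition.

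For the adjacency claim, suppose $\mathbf{i}\neq\mathbf{i}'$ are not adjacent. Since $d=2$, the intersection $f_{\mathbf{i}}(X)\cap f_{\mathbf{i}'}(X)$ is then either empty or a single corner point of $f_{\mathbf{i}}(X)$. Any $p\in f_{\mathbf{i}v}(J_\ell)\cap f_{\mathbf{i}'v'}(J_\ell)$ lies in this intersection and hence must equal that corner. Then $f_{\mathbf{i}}^{-1}(p)$ is a corner of $X$ lying in $f_v(J_\ell)\subset f_v(X)$. Because a corner of $X$ contained in a subrectangle of $X$ is automatically a corner of that subrectangle, $f_v^{-1}(f_{\mathbf{i}}^{-1}(p))$ is a corner of $X$ lying in $J_\ell$, contradicting the no-corner condition on $(\Phi^{(j)})_{j=1}^{\infty}$.

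For the uniqueness claim, by symmetry I may assume $\mathbf{i}'=\mathbf{i}+(1,0)$, so that the shared edge is $E=\{(\mathbf{i}_1+1)/n_1\}\times[\mathbf{i}_2/n_2,(\mathbf{i}_2+1)/n_2]$. Nonemptiness of $f_{\mathbf{i}v}(X)\cap E$ forces $f_v(X)$ to touch the right edge of $X$, which by uniqueness of base-$n_1$ expansion requires all first coordinates of $v$ to equal $n_1-1$; analogously, all first coordinates of $v'$ and $v''$ must be $0$. Under these constraints, $f_{\mathbf{i}v}(X)\cap E$ and $f_{\mathbf{i}'v'}(X)\cap E$ are equal-length subintervals of $E$ whose starting heights are determined by the integer encodings $N_v,N_{v'}$ of the second-coordinate strings of $v,v'$ in base $n_2$. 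These subintervals overlap iff $|N_v-N_{v'}|\leq 1$; when $|N_v-N_{v'}|=1$ the overlap is a single point, whose preimage under $f_{\mathbf{i}v}$ is a corner of $X$, and so by no-corner cannot lie in $J_\ell$. Hence $N_{v'}=N_v$, which uniquely determines $v'$. The identical reasoning applied to $v''$ yields $N_{v''}=N_v$, so $v''=v'$, as required.

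The main technical obstacle is the coordinate bookkeeping that identifies the ``touching at a single point'' case of overlapping subintervals of $E$ with a corner of $X$ under $f_v$; once this identification is verified, the no-corner condition finishes both parts cleanly.
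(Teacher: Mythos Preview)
Your argument is correct and follows essentially the same route as the paper's proof: both parts use the no-corner condition to rule out the possibility that two level-$\ell$ rectangles meet only at a corner, forcing any nonempty intersection to be a shared edge segment. The paper packages this slightly more efficiently by arguing directly that $f_{\mathbf{i}}f_v(X)\cap f_{\mathbf{i}'}f_{v'}(X)$ must be $1$-dimensional (not $0$-dimensional by no-corner, not $2$-dimensional since $\mathbf{i}\neq\mathbf{i}'$), from which adjacency of $\mathbf{i},\mathbf{i}'$ and the alignment $f_v(X),f_{v'}(X)$ sharing a full vertical segment both follow at once; your version unpacks the same geometry via explicit base-$n_1$, base-$n_2$ coordinate constraints, which is longer but equally valid.
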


\begin{proof}
    Since $\{\mathbf{i}v, \mathbf{i}'v'\} \in \mathcal{N}_{j, \ell}$, 
    we have $f_{\mathbf{i}}f_v(J_\ell) \cap f_{\mathbf{i}'}f_{v'}(J_\ell) \neq \emptyset,$ and hence $f_{\mathbf{i}}f_v(X) \cap f_{\mathbf{i}'}f_{v'}(X) \neq \emptyset.$
    By the no-corner condition, this intersection is not $0$-dimensional. 
    Since $\mathbf{i} \neq \mathbf{i}'$, this intersection is not $2$-dimensional. 
    Hence, $f_{\mathbf{i}}f_v(X) \cap f_{\mathbf{i}'}f_{v'}(X)$ is $1$-dimensional. 
    From the property of rectangles, 
    $\mathbf{i}$ and $\mathbf{i}'$ are adjacent. 

    Let $\mathbf{i} = (i_1, i_2)$ and $\mathbf{i}'= (i'_1, i'_2)$. 
    Without loss of generality, 
    we may assume $i_1 +1 = i'_1$ and $i_2 =i'_2$. 
    Then, since $f_{\mathbf{i}}f_v(X) \cap f_{\mathbf{i}'}f_{v'}(X)$ is $1$-dimensional, 
    $f_v(X)$ and $f_{v'}(X)$ are horizontally arranged rectangles and share a vertical line segment.
    Thus, for every $v'' \in V(\mathcal{N}_{j+1, \ell})$ with $v'' \neq v'$, 
    we have $f_{\mathbf{i}}f_v(J_\ell) \cap f_{\mathbf{i}'}f_{v''}(J_\ell) = \emptyset$ by the no-corner condition.  
\end{proof}

\begin{lemma}\label{lemma:relrativeH1}
    Suppose $d = 2$ and that $(\Phi^{(j)})_{j=1}^{\infty}$ satisfies the no-corner condition. 
    For $0 \leq j$ and $j+1 < \ell$, 
    the relative homology group $H_1(\mathcal{N}_{j, \ell}, \mathcal{M}_{j, j+1, \ell})$ is isomorphic to a free abelian group with basis  
        $$\{ \{\mathbf{i}v, \mathbf{i}'v'\} \in \mathcal{N}_{j, \ell} 
        \colon \mathbf{i} \neq \mathbf{i}' \in V(\mathcal{N}_{j, j+1}) 
        \text{ are adjacent and } v, v'\in V(\mathcal{N}_{j+1, \ell})\}.$$ 
\end{lemma}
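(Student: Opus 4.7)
The plan is to exploit the fact that, under the no-corner condition, the nerves are $1$-dimensional, so the relative chain complex of $(\mathcal{N}_{j,\ell}, \mathcal{M}_{j,j+1,\ell})$ is concentrated in degrees $0$ and $1$. First, by Lemma \ref{lem:H2vanish}, $\mathcal{N}_{j,\ell}$ contains no $q$-simplex for $q \geq 2$, so the relative chain groups $C_q(\mathcal{N}_{j,\ell}, \mathcal{M}_{j,j+1,\ell})$ vanish for $q \geq 2$. In particular, there are no relative $2$-boundaries and hence $H_1(\mathcal{N}_{j,\ell}, \mathcal{M}_{j,j+1,\ell}) = \ker(\bar\partial_1)$, where $\bar\partial_1$ is the induced boundary map on relative $1$-chains.

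Next, I would show that every vertex of $\mathcal{N}_{j,\ell}$ already lies in $\mathcal{M}_{j,j+1,\ell}$. Indeed, each $w \in V(\mathcal{N}_{j,\ell})$ decomposes uniquely as a concatenation $w = uv$ with $u \in V(\mathcal{N}_{j,j+1})$ and $v \in V(\mathcal{N}_{j+1,\ell})$, and then $w = \xi_u(v) \in V(\xi_u(\mathcal{N}_{j+1,\ell})) \subset V(\mathcal{M}_{j,j+1,\ell})$. Consequently, the relative $0$-chain group vanishes, so $\bar\partial_1 \equiv 0$ and
\[
H_1(\mathcal{N}_{j,\ell}, \mathcal{M}_{j,j+1,\ell}) \;=\; C_1(\mathcal{N}_{j,\ell}, \mathcal{M}_{j,j+1,\ell}) \;=\; C_1(\mathcal{N}_{j,\ell}) / C_1(\mathcal{M}_{j,j+1,\ell}),
\]
which is free abelian with basis the set of $1$-simplices of $\mathcal{N}_{j,\ell}$ that do \emph{not} belong to $\mathcal{M}_{j,j+1,\ell}$.

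It remains to identify this basis. By Definition \ref{def:subcomplex} and the unique decomposition of vertices, the $1$-simplices of $\mathcal{M}_{j,j+1,\ell}$ are precisely those of the form $\{uv, uv'\}$ with a common $u$ and $\{v,v'\} \in \mathcal{N}_{j+1,\ell}$. Therefore a $1$-simplex $\{uv, u'v'\} \in \mathcal{N}_{j,\ell}$ with $u = u'$ automatically satisfies $\{v,v'\} \in \mathcal{N}_{j+1,\ell}$ by Lemma \ref{lem:relativeC1}, and so lies in $\mathcal{M}_{j,j+1,\ell}$. Hence a basis element must satisfy $u \neq u'$, and by Lemma \ref{lem:adjacent} such $u, u'$ are necessarily adjacent in $V(\mathcal{N}_{j,j+1})$. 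Conversely, any $1$-simplex $\{\mathbf{i}v, \mathbf{i}'v'\} \in \mathcal{N}_{j,\ell}$ with $\mathbf{i} \neq \mathbf{i}'$ cannot belong to $\mathcal{M}_{j,j+1,\ell}$, giving the stated basis.

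The entire argument is essentially bookkeeping once one unpacks the structure of $\mathcal{M}_{j,j+1,\ell}$ and invokes the preceding lemmas; the only conceptual point requiring care is the verification that the relative $0$-chain group vanishes, which is what forces $\bar\partial_1 = 0$ and lets us read off $H_1$ as a free group on the described generators.
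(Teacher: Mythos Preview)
Your proposal is correct and follows essentially the same approach as the paper: both arguments observe that $C_0(\mathcal{N}_{j,\ell},\mathcal{M}_{j,j+1,\ell})=0$ (vertices agree) and $C_2=0$ (Lemma~\ref{lem:H2vanish}), so $H_1$ coincides with $C_1(\mathcal{N}_{j,\ell},\mathcal{M}_{j,j+1,\ell})$, and then identify its basis as the $1$-simplices with distinct first coordinates, which are adjacent by Lemma~\ref{lem:adjacent}. The only cosmetic difference is that you invoke Lemma~\ref{lem:relativeC1} for the ``$u=u'$ implies $\{v,v'\}\in\mathcal{N}_{j+1,\ell}$'' step, whereas the paper re-derives this inline via injectivity of $f_u$.
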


\begin{proof}
    Let $C_q(\mathcal{K})$ the $q$th oriented chain group for a simplicial complex $\mathcal{K}$, that is, 
    the free abelian group generated by the oriented $q$-simplexes of $\mathcal{K}$, see \cite[Chapter 4]{Spa}. 
    We denote by $$C_q(\mathcal{N}_{j, \ell}, \mathcal{M}_{j, j+1, \ell}) = C_q(\mathcal{N}_{j, \ell})/C_q(\mathcal{M}_{j, j+1, \ell}).$$ 
    Since $V(\mathcal{N}_{j, \ell}) = V(\mathcal{M}_{j, j+1, \ell})$, 
    we have $C_0(\mathcal{N}_{j, \ell}, \mathcal{M}_{j, j+1, \ell}) = 0$. 
    By Lemma \ref{lem:H2vanish}, we have 
    $$C_2(\mathcal{N}_{j, \ell}) = 0 = 
     C_2(\mathcal{N}_{j, \ell}, \mathcal{M}_{j, j+1, \ell}).$$
    Thus, the relative homology group $H_1(\mathcal{N}_{j, \ell}, \mathcal{M}_{j, j+1, \ell})$ is isomorphic to $C_1(\mathcal{N}_{j, \ell}, \mathcal{M}_{j, j+1, \ell})$
    by definition. 

    For a $1$-simplex $\{w, w'\}\in \mathcal{N}_{j, \ell}$, 
    we write $w = \mathbf{i}v$ and $w' = \mathbf{i}'v'$ where 
    $\mathbf{i}, \mathbf{i}' \in V(\mathcal{N}_{j, j+1})$ and 
    $v, v' \in V(\mathcal{N}_{j+1, \ell})$ respectively. 
    By definition of $\mathcal{M}_{j, j+1, \ell}$, we have that 
    $\mathbf{i} = \mathbf{i}'$ if $\{w, w'\}\in \mathcal{M}_{j, j+1, \ell}$. 
    Conversely, if $\mathbf{i} = \mathbf{i}'$, then the injectivity of $f_\mathbf{i}$ implies 
    $$f_\mathbf{i}(f_v(X) \cap f_{v'}(X)) \supset f_\mathbf{i}(f_v(X))\cap f_{\mathbf{i}'}(f_{v'}(X)) \neq \emptyset,$$
    and hence $\{v, v'\}\in \mathcal{N}_{j+1, \ell}$. 
    This shows that 
    $\mathbf{i} = \mathbf{i}'$ if and only if $\{\mathbf{i}v, \mathbf{i}'v'\}\in \mathcal{M}_{j, j+1, \ell}$. 
    Suppose that $\mathbf{i} \neq \mathbf{i}'$. 
    Then, by Lemma \ref{lem:adjacent},  $\mathbf{i}$ and $\mathbf{i}'$ are adjacent. 
    This shows that 
    $C_1(\mathcal{N}_{j, \ell}, \mathcal{M}_{j, j+1, \ell})$ is the free abelian group with basis 
    $$\{\{\mathbf{i}v, \mathbf{i}'v'\} \in \mathcal{N}_{j,\ell} \colon
        \mathbf{i} \neq \mathbf{i}' \in V(\mathcal{N}_{j, j+1}) \text{ are adjacent and } v, v'\in V(\mathcal{N}_{j+1, \ell})\}.$$
    This completes the proof. 
\end{proof}    

\begin{theorem}\label{th:countingRanks}
    Suppose $d = 2$ and that $(\Phi^{(j)})_{j=1}^{\infty}$ satisfies the no-corner condition. 
    Let $0 \leq j$ and $j+1 < \ell$. Then we have 
    \begin{align*}
        &\mathrm{rank} H_1(\mathcal{N}_{j, \ell}) 
        -\mathrm{rank}{H}_{0}(\mathcal{N}_{j, \ell}) \\
        =& \# I^{(j+1)}\cdot (
        \mathrm{rank} H_1(\mathcal{N}_{j+1, \ell}) -
        \mathrm{rank} H_0(\mathcal{N}_{j+1, \ell}))\\
         &+\#\left\{ \{\mathbf{i}v, \mathbf{i}'v'\} \in \mathcal{N}_{j, \ell} 
        \colon \mathbf{i} \neq \mathbf{i}' \in V(\mathcal{N}_{j, j+1}) \text{ are adjacent and } v, v'\in V(\mathcal{N}_{j+1, \ell})\right\}. 
    \end{align*}
    In particular, 
    \begin{align*}
        &\mathrm{rank} H_1(\mathcal{N}_{0, \ell}) 
        -\mathrm{rank}{H}_{0}(\mathcal{N}_{0, \ell}) 
        \geq 
        \left(\prod_{j=1}^{\ell-1}\#I^{(j)}\right)\cdot 
        \left(
        \mathrm{rank} H_1(\mathcal{N}_{\ell-1, \ell}) -
        \mathrm{rank} H_0(\mathcal{N}_{\ell-1, \ell})
        \right).
    \end{align*}
\end{theorem}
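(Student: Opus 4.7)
The plan is to apply Theorem \ref{res:exact} to the pair $(\mathcal{N}_{j,\ell}, \mathcal{M}_{j,j+1,\ell})$, i.e., with $k = j+1$, and extract an Euler-characteristic-type identity from the resulting long exact sequence. First, I would identify the vanishing terms. Since $V(\mathcal{N}_{j,\ell}) = V(\mathcal{M}_{j,j+1,\ell})$, the relative $0$-chains vanish, so $H_{0}(\mathcal{N}_{j,\ell}, \mathcal{M}_{j,j+1,\ell}) = 0$. Moreover, the no-corner condition combined with Lemma \ref{lem:H2vanish} forces $C_{q}(\mathcal{N}_{j,\ell}) = 0$ for every $q \geq 2$, and hence $H_{q}(\mathcal{N}_{j,\ell}, \mathcal{M}_{j,j+1,\ell}) = 0$ for such $q$ as well. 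Therefore the long exact sequence from Theorem \ref{res:exact} collapses to the five-term sequence
$$0 \to H_{1}(\mathcal{M}_{j,j+1,\ell}) \to H_{1}(\mathcal{N}_{j,\ell}) \to H_{1}(\mathcal{N}_{j,\ell}, \mathcal{M}_{j,j+1,\ell}) \to H_{0}(\mathcal{M}_{j,j+1,\ell}) \to H_{0}(\mathcal{N}_{j,\ell}) \to 0.$$

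Next, I would invoke the standard fact that the alternating sum of ranks in an exact sequence of finitely generated abelian groups vanishes. This yields
$$\mathrm{rank}\, H_{1}(\mathcal{N}_{j,\ell}) - \mathrm{rank}\, H_{0}(\mathcal{N}_{j,\ell}) = \mathrm{rank}\, H_{1}(\mathcal{M}_{j,j+1,\ell}) - \mathrm{rank}\, H_{0}(\mathcal{M}_{j,j+1,\ell}) + \mathrm{rank}\, H_{1}(\mathcal{N}_{j,\ell}, \mathcal{M}_{j,j+1,\ell}).$$
I would then substitute the two available computations: Lemma \ref{lem:homologyOfMjkl} gives $\mathrm{rank}\, H_{q}(\mathcal{M}_{j,j+1,\ell}) = \#V(\mathcal{N}_{j,j+1}) \cdot \mathrm{rank}\, H_{q}(\mathcal{N}_{j+1,\ell}) = \#I^{(j)} \cdot \mathrm{rank}\, H_{q}(\mathcal{N}_{j+1,\ell})$, while Lemma \ref{lemma:relrativeH1} identifies $\mathrm{rank}\, H_{1}(\mathcal{N}_{j,\ell}, \mathcal{M}_{j,j+1,\ell})$ with precisely the cardinality of the set of adjacent $1$-simplices in the statement. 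Plugging these in yields exactly the first equality.

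For the second inequality, I would iterate the first identity from $j = 1$ upwards. Writing $A_{j} := \mathrm{rank}\, H_{1}(\mathcal{N}_{j,\ell}) - \mathrm{rank}\, H_{0}(\mathcal{N}_{j,\ell})$ and noting that the adjacent-edge count is non-negative, the identity reads $A_{j} \geq \#I^{(j)} \cdot A_{j+1}$. Since each factor $\#I^{(j)}$ is strictly positive, multiplication preserves the inequality, so telescoping yields the asserted bound in terms of $A_{\ell-1}$.

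The main subtlety is the vanishing of the relative homology outside degree one, which is where the no-corner hypothesis and the two-dimensional setting enter through Lemmas \ref{lem:H2vanish} and \ref{lemma:relrativeH1}; once these structural vanishing results are in hand, the proof reduces to bookkeeping on a five-term exact sequence. One should also verify that the telescoping inequality is preserved regardless of the sign of the intermediate $A_{j}$'s, which is automatic because the multiplicative factors are strictly positive.
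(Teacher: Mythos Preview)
Your proposal is correct and follows essentially the same route as the paper's own proof: apply the long exact sequence of Theorem~\ref{th:exact} to the pair $(\mathcal{N}_{j,\ell},\mathcal{M}_{j,j+1,\ell})$, use the vanishing of the relative $H_0$ (same vertex sets) and of all $H_q$ for $q\ge 2$ (Lemma~\ref{lem:H2vanish}), take the alternating rank sum, and plug in Lemmas~\ref{lem:homologyOfMjkl} and~\ref{lemma:relrativeH1}; the inequality then follows by iterating and dropping the non-negative edge count. Your remark that the telescoping step is valid regardless of the sign of the intermediate $A_j$'s (since the factors $\#I^{(j)}$ are positive) is a nice clarification that the paper leaves implicit.
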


\begin{proof}
    By Theorem \ref{th:exact}, we have the following long exact sequence. 
    \begin{equation*}
	\begin{tikzcd}
	 0 \arrow[r] & {H}_1(\mathcal{M}_{j, j+1, \ell}) \arrow[r] & {H}_1(\mathcal{N}_{j, \ell}) 	\arrow[r] & {H}_1(\mathcal{N}_{j, \ell}, \mathcal{M}_{j, j+1, \ell})
	 \\
    \arrow[r] &{H}_{0}(\mathcal{M}_{j, j+1, \ell}) \arrow[r] & {H}_0(\mathcal{N}_{j, \ell}) 	\arrow[r] & {H}_0(\mathcal{N}_{j, \ell}, \mathcal{M}_{j, j+1, \ell}). 
	\end{tikzcd}
	\end{equation*}
    Since the relative $0$th oriented chain group satisfies ${C}_0(\mathcal{N}_{j, \ell}, \mathcal{M}_{j, j+1, \ell}) = 0$, we have ${H}_0(\mathcal{N}_{j, \ell}, \mathcal{M}_{j, j+1, \ell}) = 0.$ 
    By additivity of ranks in an exact sequence, we have 
    \begin{align*}
         0 = \ 
         &\mathrm{rank}{H}_1(\mathcal{M}_{j, j+1, \ell}) 
        -\mathrm{rank}{H}_1(\mathcal{N}_{j, \ell}) 
        +\mathrm{rank}{H}_1(\mathcal{N}_{j, \ell}, \mathcal{M}_{j, j+1, \ell})\\
        &-\mathrm{rank}{H}_{0}(\mathcal{M}_{j, j+1, \ell})
        +\mathrm{rank}{H}_{0}(\mathcal{N}_{j, \ell}). 
    \end{align*}
    By Lemma \ref{lem:homologyOfMjkl}, we have 
    $$\mathrm{rank}{H}_q(\mathcal{M}_{j, j+1, \ell}) = \#V(\mathcal{N}_{j, j+1}) \cdot\mathrm{rank}{H}_q(\mathcal{N}_{j+1, \ell})$$ 
    for $q = 0$ and $q = 1$. 
    By Lemma \ref{lemma:relrativeH1}, the rank of ${H}_1(\mathcal{N}_{j, \ell}, \mathcal{M}_{j, j+1, \ell})$ is the same as the number of basis elements. 
    This completes the proof of the first assertion. 

    Setting $j=0$, we have 
    \begin{align*}
        \mathrm{rank} H_1(\mathcal{N}_{0, \ell}) -\mathrm{rank}{H}_{0}(\mathcal{N}_{0, \ell})
        \geq & \# I^{(1)}\cdot (
        \mathrm{rank} H_1(\mathcal{N}_{1, \ell}) -
        \mathrm{rank} H_0(\mathcal{N}_{1, \ell})). 
    \end{align*}
    Setting $j=1$, we have 
    \begin{align*}
        \mathrm{rank} H_1(\mathcal{N}_{1, \ell}) -\mathrm{rank}{H}_{0}(\mathcal{N}_{1, \ell})
        \geq & \# I^{(2)}\cdot (
        \mathrm{rank} H_1(\mathcal{N}_{2, \ell}) -
        \mathrm{rank} H_0(\mathcal{N}_{2, \ell})). 
    \end{align*}
    By repeating this procedure, we have the desired inequality. 
    This completes the proof. 
\end{proof}

\begin{lemma}\label{lem:2by2Mjkl}
    Suppose $d=2$, $n_1 = n_2 = 2$, and for every $j \geq 1$,
    we have $\#\left(I\setminus I^{(j)}\right) = 1$. 
    Suppose also that $(\Phi^{(j)})_{j=1}^{\infty}$ satisfies the no-corner condition. 
    Then, for every $0 \leq j$ and $j+1 < \ell$, 
    $$\#\{ \{\mathbf{i}v, \mathbf{i}'v'\} \in \mathcal{N}_{j, \ell} 
    \colon \mathbf{i} \neq \mathbf{i}' \in V(\mathcal{N}_{j, j+1}) \text{ are adjacent and }  v, v'\in V(\mathcal{N}_{j+1, \ell})\} = 2.$$
\end{lemma}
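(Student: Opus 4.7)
The plan is to count the target simplices by separately analyzing the two adjacent pairs in $V(\mathcal{N}_{j, j+1}) = I^{(j)}$ and, for each such pair, pinning down a unique admissible $(v, v')$. First I would note that the adjacency graph on $I = \{0,1\}^2$ is a four-cycle, so deleting the single missing element $(r_1^{(j)}, r_2^{(j)}) := I \setminus I^{(j)}$ removes the two incident edges and leaves precisely the horizontal pair $\{(0, 1-r_2^{(j)}), (1, 1-r_2^{(j)})\}$ and the vertical pair $\{(1-r_1^{(j)}, 0), (1-r_1^{(j)}, 1)\}$. Thus exactly two adjacent pairs survive in $V(\mathcal{N}_{j, j+1})$.

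I would then focus, without loss of generality, on the horizontal pair $\mathbf{i} = (0, i_2)$ and $\mathbf{i}' = (1, i_2)$, whose shared boundary is a vertical segment on the line $x = 1/2$. Writing $v = (v_1, \ldots, v_m)$ with $m = \ell - j - 1$ and $v_k = (a_k, b_k) \in I^{(j+k)}$, and similarly $v'_k = (a'_k, b'_k)$, a dyadic-expansion analysis of $f_{\mathbf{i}v}(X) \cap f_{\mathbf{i}'v'}(X) \neq \emptyset$ forces $a_k = 1$ and $a'_k = 0$ for every $k$ (flushness with the line $x = 1/2$) together with $b_k = b'_k$ (alignment of $y$-levels). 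Requiring further that both $(1, b_k)$ and $(0, b_k)$ avoid the unique missing element $(r_1^{(j+k)}, r_2^{(j+k)})$ pins down $b_k = 1 - r_2^{(j+k)}$ uniquely. Hence the horizontal pair admits at most one candidate $(v, v')$; the vertical pair is handled identically with the coordinate roles swapped.

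The remaining step, which I expect to be the main obstacle, is to verify that this candidate truly defines a simplex of $\mathcal{N}_{j, \ell}$, i.e., that $f_{\mathbf{i}v}(J_\ell) \cap f_{\mathbf{i}'v'}(J_\ell) \neq \emptyset$ rather than merely the ambient boxes. I would reduce this to producing $y^\ast \in [0, 1]$ with both $(1, y^\ast), (0, y^\ast) \in J_\ell$, and take $y^\ast := \sum_{k \geq 1} (1 - r_2^{(\ell+k-1)})/2^k$. The two codings $\bigl((1, 1-r_2^{(\ell+k-1)})\bigr)_{k \geq 1}$ and $\bigl((0, 1-r_2^{(\ell+k-1)})\bigr)_{k \geq 1}$ are admissible under $(\Phi^{(\ell+k-1)})_{k \geq 1}$ because $1 - r_2^{(\ell+k-1)} \neq r_2^{(\ell+k-1)}$ prevents either symbol from coinciding with the excluded pair, so $y^\ast$ lies in both $\{y : (1,y) \in J_\ell\}$ and $\{y : (0,y) \in J_\ell\}$. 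Running the parallel argument for the vertical pair yields the asserted total of $2$. The no-corner hypothesis is used implicitly via Lemma \ref{lem:adjacent}, which ensures that the box-intersection analysis is not inflated by coincident corner intersections.
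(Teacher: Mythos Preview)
Your proposal is correct and follows essentially the same route as the paper. Both arguments first observe that removing one vertex from the adjacency four-cycle on $\{0,1\}^2$ leaves exactly two adjacent pairs, then show that each pair contributes precisely one simplex. The paper peels off one level at a time---decomposing $J_2 \cap (\{1\}\times(0,1))$ and $J_2 \cap (\{0\}\times(0,1))$ via $J_3$, and so on---while you write down the full dyadic constraint on $(v,v')$ in one step and then construct the explicit witness $y^\ast$ for existence; this is the same induction unfolded into closed form. One small caveat: your assertion that $f_{\mathbf{i}v}(X)\cap f_{\mathbf{i}'v'}(X)\neq\emptyset$ alone forces $b_k=b'_k$ is not quite right, since the boxes could touch at a single corner; what actually forces $b_k=b'_k$ is the no-corner condition (the intersection must be $1$-dimensional, as in the proof of Lemma~\ref{lem:adjacent}), which you do acknowledge at the end.
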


See Figure~\ref{fig:2by2-1} for an intuitive illustration. 

\begin{proof}
    Without loss of generality, we may assume $j = 0$. 
    Fix a $1$-simplex $\{\mathbf{i}^{(1)}, \mathbf{i}'^{(1)}\} \in \mathcal{N}_{0, 1}$. 
    Without loss of generality, 
    we may assume $\mathbf{i}^{(1)} = (i_1, i_2)$, $\mathbf{i}'^{(1)}= (i'_1, i'_2)$,  $i_1 =0$, $i'_1 =1$, and $i_2= i'_2 = 0$. 
    Since $f_\mathbf{i}(X) \cap f_\mathbf{i'}(X) = \{1/2\} \times [0, 1/2]$,  the intersection satisfies 
    $$f_{\mathbf{i}^{(1)}}(J_1) \cap f_{\mathbf{i'^{(1)}}}(J_1) = 
        f_{\mathbf{i}^{(1)}}(J_1 \cap (\{1\} \times [0, 1])) \cap 
        f_{\mathbf{i'^{(1)}}}(J_1 \cap(\{0\} \times [0, 1])).$$  
    By the no-corner condition, we can replace the closed vertical interval by the open vertical interval so that 
    $$f_{\mathbf{i}^{(1)}}(J_1) \cap f_{\mathbf{i'}^{(1)}}(J_1) = 
    f_{\mathbf{i}^{(1)}}(J_1 \cap (\{1\} \times (0, 1))) \cap 
    f_{\mathbf{i'}^{(1)}}(J_1\cap(\{0\} \times (0, 1))).$$  
    Let ${I}^{(2)}_0 := {I}^{(2)} \cap (\{0\} \times \{0, 1\})$ and 
    ${I}^{(2)}_1 := {I}^{(2)} \cap (\{1\} \times \{0, 1\})$. 
    Then, by the property of fractal rectangles, we have 
    $$J_1 \cap (\{0\} \times (0, 1)) = \bigcup_{\mathbf{i} \in I^{(2)}_0}f_{\mathbf{i}} (J_2\cap (\{0\} \times (0, 1)))$$
    and
    $$J_1 \cap (\{1\} \times (0, 1)) = \bigcup_{\mathbf{i} \in I^{(2)}_1}f_{\mathbf{i}} (J_2\cap (\{1\} \times (0, 1))).$$
    Thus, there exist unique
    $\mathbf{i}^{(2)} \in {I}^{(2)}_1$ and 
    $\mathbf{i'}^{(2)} \in {I}^{(2)}_0$ such that
    $$f_{\mathbf{i}^{(1)}}f_{\mathbf{i}^{(2)}}(J_2) \cap f_{\mathbf{i'}^{(1)}}f_{\mathbf{i'}^{(2)}}(J_2) \neq \emptyset.$$  
    By repeating this procedure, there exist unique
    $v, v' \in V(\mathcal{N}_{1, \ell})$ such that
    $\{\mathbf{i}^{(1)}v, \mathbf{i}'^{(1)}v'\} \in \mathcal{N}_{0, \ell} $. 
    By the no-corner condition, the number of $1$-simplexes of $\mathcal{N}_{0, 1}$ is $2$.  
    Thus, we have 
    $$\#\{ \{\mathbf{i}v, \mathbf{i}'v'\} \in \mathcal{N}_{0, \ell} 
    \colon \mathbf{i} \neq \mathbf{i}' \in V(\mathcal{N}_{0, 1}) \text{ are adjacent and }  v, v'\in V(\mathcal{N}_{1, \ell})\} = 2.$$
    This completes the proof. 
\end{proof}

The following theorem provides an answer to a non-autonomous and homological  analog of the Mandelbrot percolation problem.

\begin{theorem}\label{th:randomSqDim2}
    Suppose that $d = 2$ and $1 \leq r \leq n_{1}n_{2} -1$. 
    Suppose that each $I^{(j)}$ is randomly chosen independently according to the uniform distribution on $\mathcal{P}_{r}(I)$. 
    Then almost surely the limit set $J$ satisfies $\check{H}_q(J) = 0$ for every $q \geq 2$. 
    Moreover, we have the following. 
    \begin{enumerate}
        \item If $r = 1,$ then $\check{H}_0(J) \cong \ZZ$. 
            \begin{enumerate}
                \item If $n_1 = n_2 = 2,$ then $\check{H}_1(J) = 0$.
                \item If $(n_1, n_2) \neq (2, 2),$ then 
                    $$\lim_{k \to \infty}\frac{1}{k}\log (\mathrm{rank} H_1(\mathcal{N}_{0, k})) = \log(n_1n_2 - r)$$
                almost surely. 
            \end{enumerate}
        \item If $2 \leq r < \min\{n_{1}, n_{2}\}$,   
        then for each $q=0,1$, 
        $$\lim_{k \to \infty}\frac{1}{k}\log (\mathrm{rank} H_q(\mathcal{N}_{0, k})) = \log(n_1n_2 - r)$$
        almost surely. 
        \item If $n_{1} \leq r < n_{2}$ (resp. $n_{2} \leq r < n_{1}$), then almost surely, every connected component of $J$ is a horizontal (resp. vertical) line segment. One of them is a line segment of length $1$, and the others may possibly degenerate to single points.  
        \item If $r \geq \max\{n_{1}, n_{2}\}$, then $J$ is totally disconnected almost surely. 
    \end{enumerate}
\end{theorem}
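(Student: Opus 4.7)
The plan is to combine the simplicial machinery of Sections~\ref{sec:sCpx}--\ref{sec:exact} with the probabilistic input of Theorem~\ref{th:randomSq}. As a preliminary used throughout, I would verify that the no-corner condition of Definition~\ref{def:corner} holds almost surely: since $r \geq 1$, each of the four corners of $I$ is absent from $I^{(j)}$ with probability $r/(n_{1} n_{2}) > 0$, so independence and the Borel--Cantelli lemma force each corner to be omitted from $I^{(k)}$ for infinitely many $k \geq j$, and hence no corner of $X$ belongs to any $J_{j}$. By Lemma~\ref{lem:H2vanish} the nerve $\mathcal{N}_{1, k}$ is then one-dimensional, so $H_{q}(\mathcal{N}_{1, k}) = 0$ for every $q \geq 2$, giving $\check{H}_{q}(J) = 0$ via Theorem~\ref{th:cechsumi}. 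Since every finite one-dimensional simplicial complex has free abelian $H_{0}, H_{1}$ with $H^{q} \cong H_{q}$ by universal coefficients, the freeness/duality addendum at the end of the theorem follows immediately.

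Next I would dispatch the (dis)connectedness statements by appealing to Theorem~\ref{th:randomSq}. Case (2.4) is exactly Theorem~\ref{th:randomSq}.4; for (2.3) with $n_{1} \leq r < n_{2}$, Theorem~\ref{th:randomSq}.3 with $k = 2$ gives that every component projects to a single point in the $x_{2}$-coordinate (hence is horizontal), while Theorem~\ref{th:randomSq}.2 with $k = 1$ (valid since $r < n_{2}$) yields a full horizontal segment of length one. Case (2.1) begins with $r = 1 < d = 2$, so Theorem~\ref{th:randomSq}.1 gives $J$ connected and $\check{H}_{0}(J) \cong \ZZ$. Subcase (2.1a) with $n_{1} = n_{2} = 2$ is then handled by explicit recursion: removing any one cell of the $2 \times 2$ grid leaves a path on three vertices, so $A_{\ell-1} := \mathrm{rank} H_{1}(\mathcal{N}_{\ell-1, \ell}) - \mathrm{rank} H_{0}(\mathcal{N}_{\ell-1, \ell}) = -1$; combining with $L_{j, \ell} = 2$ from Lemma~\ref{lem:2by2Mjkl} and $\#I^{(j)} = 3$, the recursion of Theorem~\ref{th:countingRanks} reads $A_{j} = 3 A_{j+1} + 2$, whose unique fixed point is $-1$, so $A_{j} \equiv -1$ and $H_{1}(\mathcal{N}_{1, \ell}) = 0$ for every $\ell$, giving $\check{H}_{1}(J) = 0$.

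For the growth-rate statements in (2.1b) and (2.2) I would sandwich between an upper and a lower bound. The upper bound uses that by the no-corner condition every cross $1$-simplex comes from two axis-aligned sub-rectangles sharing an edge (Lemma~\ref{lem:adjacent}), so each vertex of $\mathcal{N}_{1, k}$ has degree at most $4$; hence $|E(\mathcal{N}_{1, k})| \leq 2(n_{1} n_{2} - r)^{k-1}$, yielding $\limsup_{k \to \infty}\frac{1}{k}\log \mathrm{rank} H_{1}(\mathcal{N}_{1, k}) \leq \log(n_{1} n_{2} - r)$ and the analogous bound for the rank difference in (2.2). For the lower bound I would unfold Theorem~\ref{th:countingRanks} as
\[
A_{1} \;=\; (n_{1}n_{2}-r)^{\ell-2} A_{\ell-1} \;+\; \sum_{m=1}^{\ell-2}(n_{1}n_{2}-r)^{m-1} L_{m, \ell},
\]
and show that almost surely the leading term has order $(n_{1}n_{2}-r)^{\ell-2}$ for infinitely many $\ell$. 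This reduces to exhibiting a positive-probability event under which $A_{\ell-1} \geq 1$: in case (2.2), removing two opposite corners of $I$ already leaves a graph with a cycle (e.g.\ in the $3 \times 3$ grid this gives $|E| = 8, |V| = 7$, so $A_{\ell-1} = 1$); and in case (2.1b) with $(n_{1}, n_{2}) \notin \{(2, 2), (2, 3), (3, 2)\}$, a corner removal already leaves $A_{\ell-1} \geq 1$. Borel--Cantelli then upgrades positive probability to almost-sure infinite occurrence, producing the matching lower bound on the limsup. The infinite-rank statements are treated separately: $\mathrm{rank}\, \check{H}_{0}(J) = \infty$ in (2.2) follows from Corollary~\ref{cor:finiteCon} by showing $\#\Con(\mathcal{N}_{1, k}) \to \infty$ a.s., which holds because with $r \geq 2$ one can with positive probability remove both neighbors of a corner piece of $I$, and Borel--Cantelli makes such isolations occur at infinitely many levels; $\mathrm{rank}\, \check{H}_{1}(J) = \infty$ in (2.1b) and (2.2) is obtained via the Alexander duality $\check{H}^{1}(J) \cong \tilde{H}_{0}(\RR^{2}\setminus J)$ from the remark following Theorem~\ref{res:ex} together with an explicit construction of infinitely many bounded components of $\RR^{2}\setminus J$ at arbitrarily small scales using the iid structure. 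The principal obstacle I anticipate is the borderline subcase $(n_{1}, n_{2}) \in \{(2, 3), (3, 2)\}$ of (2.1b), where no single-level removal achieves $A_{\ell-1} \geq 1$; there the growth must instead be extracted from $L_{m, \ell}$ with $m$ only $O(1)$ away from $\ell$, via a multi-level probabilistic argument exploiting the matched-boundary structure between adjacent sub-rectangles to realize cross $1$-simplices with probability bounded away from zero.
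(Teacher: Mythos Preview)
Your approach matches the paper's proof closely. Both arguments begin with the almost-sure no-corner condition to reduce to one-dimensional nerves (Lemma~\ref{lem:H2vanish}), dispatch cases (3) and (4) via Theorem~\ref{th:randomSq}, and handle cases (1) and (2) by unfolding the rank recursion of Theorem~\ref{th:countingRanks}. Your treatment of the $(2,2)$ subcase via the fixed point of the recursion and your anticipation of a two-level argument for $(2,3)$ agree with the paper (which also passes to $\mathcal N_{k-1,k+1}$ there); your upper bound via the degree-$4$ observation is a bit cleaner than the paper's direct count of adjacent rectangles but equivalent in effect.

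One point deserves care: your proposed route to $\mathrm{rank}\,\check H_1(J)=\infty$ via Alexander duality yields $\check H^{1}(J)\cong\tilde H_0(\RR^2\setminus J)$, which is the \v{C}ech \emph{cohomology} (a direct limit), not $\check H_1(J)=\varprojlim_k H_1(\mathcal N_{1,k})$. These are genuinely different limits, and the freeness of each finite-stage $H_1(\mathcal N_{1,k})$ does not by itself identify them. The paper is also terse at this step, simply asserting that $\mathrm{rank}\,H_1(\mathcal N_{1,k})\to\infty$ implies $\mathrm{rank}\,\check H_1(J)=\infty$. In case~(1) this can be justified: since every $\mathcal N_{k,k+1}$ is connected (proof of Theorem~\ref{th:connectedSq}), the map $\phi_*\colon H_1(\mathcal N_{1,k+1})\to H_1(\mathcal N_{1,k})$ is surjective (lift each edge of a cycle via Lemma~\ref{lem:simplicialSurjective} and patch mismatched endpoints inside each connected $\xi_u(\mathcal N_{k,k+1})$, the added paths mapping to zero under $\phi$), so the inverse limit surjects onto every finite stage. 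In case~(2) the nerves $\mathcal N_{k,k+1}$ need not be connected, and neither your Alexander-duality shortcut nor the growth rate alone suffices; you should instead exhibit, at each level, explicit cycles that survive the transition maps.
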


\begin{proof}
    The third and fourth items are corollaries of Theorem~\ref{th:randomSq}. 
    We prove the first and the second items. 
    By Lemma~\ref{lem:nocorner}, we may work on the probability-one event on which $(\Phi^{(j)})_{j=1}^{\infty}$ satisfies the no-corner condition in what follows.
    
    By Lemma \ref{lem:H2vanish}, the nerve $\mathcal{N}_{j,k}$ contains no $q$-simplex for every $0 \leq j < k$ and $q \geq 2$. 
    Hence, $\check{H}_q(J) = \varprojlim{H}_q(\mathcal{N}_{0,k}) = 0$ for every $q \geq 2$ by Theorem \ref{th:cechsumi}.
    
    We assume $r = 1$ and consider the $0$th homology. 
    By Theorem \ref{th:connectedSq}, the limit set $J$ is connected and hence $\check{H}_0(J) \cong \ZZ$. 

    We assume $r = 1$ and $n_1 = n_2 =2$ and prove the statement (1a). 
    We show, by backward induction, that $H_1(\mathcal{N}_{j, \ell}) = 0$ for every $0 \leq j$ and $j+1 < \ell$.
    Since $r = 1$ and $n_1 = n_2 =2$, for every $k \geq 0$, the nerve $\mathcal{N}_{k,k+1}$ consists of three $0$-simplexes and two $1$-simplexes, which do not form any $1$-cycle. 
    Thus, $H_1(\mathcal{N}_{k, k+1}) = 0$. 
    Suppose that $1 \leq k < \ell$ and $H_1(\mathcal{N}_{k, \ell}) = 0$, and we show $H_1(\mathcal{N}_{k-1, \ell}) = 0$ by using  Theorem \ref{th:exact}.
    By Lemma \ref{lemma:relrativeH1}, the relative homology group $H_1(\mathcal{N}_{k-1, \ell}, \mathcal{M}_{k-1, k, \ell})$ is the free abelian group with basis
    $$\{\{\mathbf{i}v, \mathbf{i}'v'\} \in \mathcal{N}_{k-1,\ell} \colon
        \mathbf{i} \neq \mathbf{i}' \in V(\mathcal{N}_{k-1, k}) \text{ are adjacent and }  v, v'\in V(\mathcal{N}_{k, \ell})\}.$$
    By Lemma \ref{lem:2by2Mjkl}, this basis has precisely two elements. 
    Also, we have the following exact sequence. 
    \begin{equation*}
	\begin{tikzcd}
	 0 \arrow[r] & H_1(\mathcal{M}_{k-1, k,  \ell}) \arrow[r] & {H}_1(\mathcal{N}_{k-1, \ell}) 	\arrow[r] & {H}_1(\mathcal{N}_{k-1, \ell}, \mathcal{M}_{k-1, k, \ell})
	 \\
    \arrow[r, "\partial"] & H_0(\mathcal{M}_{k-1, k,  \ell}) \arrow[r] & {H}_0(\mathcal{N}_{k-1, \ell}) 	\arrow[r] & {H}_0(\mathcal{N}_{k-1, \ell}, \mathcal{M}_{k-1, k, \ell}) = 0& 
	\end{tikzcd}
	\end{equation*}
    By Lemma \ref{lem:homologyOfMjkl} and $H_1(\mathcal{N}_{k, \ell}) = 0$, we have  
    \begin{align*}
        0 \to H_1(\mathcal{N}_{k-1, \ell}) \to {\ZZ}^2 
        \xrightarrow{\partial} \oplus_{\# I^{(k)}} \ZZ \to \ZZ \to 0. 
    \end{align*}
    Here, the kernel of 
    $\partial \colon \ZZ^2 \to \oplus_{\# I^{(k)}} \ZZ$ 
    is $0$; hence, $ H_1(\mathcal{N}_{k-1, \ell}) = 0$. 
    It follows by induction that $ H_1(\mathcal{N}_{0, \ell}) = 0$ for every $\ell > 0$, and passing to the inverse limit, we have $\check{H}_1(J) = 0$ 
    by Theorem~\ref{th:cechsumi}.

    We assume $r = 1$ and $(n_1, n_2) \neq (2, 2)$ and prove the statement (1b).
    By Theorem \ref{th:H1growthInf}, we have 
    $$\liminf_{k \to \infty}\frac{1}{k}\log (\mathrm{rank} H_1(\mathcal{N}_{0, k})) \geq \log(n_1n_2 - 1).$$ 
    We now show $$\limsup_{k \to \infty}\frac{1}{k}\log (\mathrm{rank} H_1(\mathcal{N}_{0, k})) \leq \log(n_1n_2 - 1),$$
    which implies $$\lim_{k \to \infty}\frac{1}{k}\log (\mathrm{rank} H_1(\mathcal{N}_{0, k})) = \log(n_1n_2 - 1).$$
    For every $0 \leq j$ and $j+1 < \ell$, 
    by  counting the number of horizontally adjacent rectangles and vertically adjacent rectangles respectively, we have 
    \begin{align*}
    &\#\{ \{\mathbf{i}v, \mathbf{i}'v'\} \in \mathcal{N}_{j, \ell} 
    \colon \mathbf{i} \neq \mathbf{i}' \in V(\mathcal{N}_{j, j+1}) \text{ are adjacent and }  v, v'\in V(\mathcal{N}_{j+1, \ell})\} \\
    \leq 
    &\#\{ \{\mathbf{i}v, \mathbf{i}'v'\} 
    \colon \mathbf{i} \neq \mathbf{i}' \in I \text{ and }  \mathbf{i}v, \mathbf{i}'v' \text{are adjacent}\}\\
    \leq 
    &(n_1 -1)n_2^{\ell - j} + n_1^{\ell -j}(n_2 -1)\\
    \leq 
    & 2(n_1n_2-1)^{\ell - j+1}. 
    \end{align*}
    Hence, by Theorem \ref{th:countingRanks} we have
    \begin{align*}
        &\mathrm{rank} H_1(\mathcal{N}_{j, \ell}) -1\\
        \leq & (n_1n_2 - 1)\cdot (
        \mathrm{rank} H_1(\mathcal{N}_{j+1, \ell}) - 1) 
        + 2(n_1n_2-1)^{\ell - j+1}. 
    \end{align*}
    For a fixed $\ell$, as $j$ decreases, 
    we can inductively show that 
    \begin{align*}
        &\mathrm{rank} H_1(\mathcal{N}_{j, \ell}) -1\\
        \leq & (n_1n_2 - 1)^{\ell -j -1}\cdot 
        (
        \mathrm{rank} H_1(\mathcal{N}_{\ell-1, \ell}) - 1) 
        + 2(\ell -j -1)(n_1n_2 - 1)^{\ell -j + 1}.
    \end{align*}
    Since $\mathrm{rank} H_1(\mathcal{N}_{\ell-1, \ell})$ is bounded above, we deduce 
    $$\limsup_{k \to \infty}\frac{1}{k}\log (\mathrm{rank} H_1(\mathcal{N}_{0, k})) \leq \log(n_1n_2 - 1).$$ 
    This completes the proof  of the statement (1b).
    
    We assume $2 \leq r < \min\{n_{1}, n_{2}\}$ and prove statement 2. 
    The equation for $q=0$ follows from Theorem~\ref{th:H0Growth}. 
    We show the equation for $q=1$. 
    By Theorem \ref{th:H1growthInf}, we have 
    $$\liminf_{k \to \infty}\frac{1}{k}\log (\mathrm{rank} H_1(\mathcal{N}_{0, k})) \geq \log(n_1n_2 - r).$$ 
    Let $R_{j, \ell} = \mathrm{rank} H_1(\mathcal{N}_{j, \ell}) -
        \mathrm{rank} H_0(\mathcal{N}_{j, \ell})$ for every $0 \leq j < \ell$. 
    By an argument similar to that in the case $r=1$ and $(n_1, n_2) \neq (2, 2)$,
    we can show 
    \begin{align*}
    R_{j, \ell}
    \leq  (n_1n_2 - r)^{\ell -j -1}R_{\ell-1, \ell}
    + 2(\ell -j -1)(n_1n_2 - r)^{\ell -j + 1}
    \end{align*}
    for every $0 \leq j < \ell$. 
    Setting $j=0$ and $\ell = k$, we have 
    \begin{align*}
    \mathrm{rank} H_1(\mathcal{N}_{0, k})
    \leq  \mathrm{rank} H_0(\mathcal{N}_{0, k})+
    (n_1n_2 - r)^{k -1}\mathrm{rank} H_1(\mathcal{N}_{k-1, k})
    + 2(k -1)(n_1n_2 - r)^{k + 1}
    \end{align*}
    for every $k>0$.
    Since 
    $$\lim_{k \to \infty}\frac{1}{k}\log (\mathrm{rank} H_0(\mathcal{N}_{0, k})) = \log(n_1n_2 - r),$$
     we have
    $$\limsup_{k \to \infty}\frac{1}{k}\log (\mathrm{rank} H_1(\mathcal{N}_{0, k})) 
    \leq \log(n_1n_2 - r).$$
    This shows that 
    $$\lim_{k \to \infty}\frac{1}{k}\log (\mathrm{rank} H_1(\mathcal{N}_{0, k})) 
    = \log(n_1n_2 - r).$$
    This completes the proof. 
\end{proof} 

\begin{rem}\label{rem:tosionFree}
    By the proof, one can verify that 
    all the homology groups in the planar setting considered in Subsection 6.4 are torsion-free; 
    thus the $q$th homology group $H_q(\mathcal{N}_{j, k})$ is noncanonically isomorphic to the $q$th cohomology group $H^q(\mathcal{N}_{j, k})$ for every $q \geq 0$. 
    This is due to the universal coefficient theorem;
    see \cite[Corollary 5.5.4]{Spa}. 
\end{rem}

In Theorem \ref{th:randomSqDim2}, 
we observe that the exponential growth rate of the ranks of the (co)homology groups coincides with the entropy. However, the authors do not know in which generality this holds.

\section*{Declarations}
\noindent\textbf{Acknowledgment:}
The authors would like to thank Hiroki Sumi for valuable discussions, a careful reading of a previous version of the manuscript, and insightful suggestions.
The authors also thank Shigeki Akiyama for pointing them to the relevant reference \cite{Sad}.

\noindent\textbf{Funding:}
YN is partially supported by the JSPS KAKENHI Grant Number JP25K17282. 
TW is partially supported by JSPS KAKENHI 
(JP23K13000, JP24K00526, JP25K00011) 
and by JST AIP Accelerated Program JPMJCR25U6.


\noindent\textbf{Data availability:}
This manuscript has no associated data. 

\noindent\textbf{Use of Generative Al:}
The authors used ChatGPT-5.6 for English-language editing and for verification of some mathematical arguments. All mathematical statements and proofs were independently checked by the authors, who take full responsibility for the final manuscript.


\begin{thebibliography}{00}
\bibitem{ADTW}
Akiyama, S.; Dorfer, G.; Thuswaldner, J. M.; Winkler, R.:
On the fundamental group of the Sierpi\'nski-gasket. 
Topology Appl. 156 (2009), no. 9, 1655--1672.

\bibitem{AT}
Akiyama, Shigeki; Thuswaldner, J\"org M.: 
A survey on topological properties of tiles related to number systems. 
Geom. Dedicata 109 (2004), 89--105.

\bibitem{BM}
Bandt, Christoph; Mesing, Mathias:
Self-affine fractals of finite type. 
Banach Center Publ., 84
Polish Academy of Sciences, Institute of Mathematics, Warsaw, 2009, 131--148.

\bibitem{BMil}
Barratt, M. G.; Milnor, John: 
An example of anomalous singular homology.
Proc.\ Amer.\ Math.\ Soc. 13 (1962), 293--297.

\bibitem{CCD}
Chayes, J. T.; Chayes, L.; Durrett, R.: 
Connectivity properties of Mandelbrot's percolation process. 
Probab. Theory Related Fields 77 (1988), no. 3, 307--324.

\bibitem{Cristea}
Cristea, Ligia L.: 
On the connectedness of limit net sets. 
Topology Appl. 155 (2008), no. 16, 1808--1819.

\bibitem{EK}
Eda, Katsuya; Kawamura, Kazuhiro:
The singular homology of the Hawaiian earring.
J. London Math.\ Soc.\ (2) 62 (2000), no. 1, 305--310.

\bibitem{F}
Falconer, K. J.:
Random fractals.
Math. Proc. Cambridge Philos. Soc. 100 (1986), no. 3, 559--582.

\bibitem{FalFG}
Falconer, Kenneth: 
Fractal geometry. 
John Wiley \& Sons, Ltd., Chichester, 1990, xxii+288 pp.

\bibitem{FF}
Falconer, Kenneth; Feng, Tianyi: 
Fractal percolation on statistically self-affine carpets. 
Proc.\ Amer.\ Math.\ Soc.\ 153 (2025), no. 3, 1121--1134.

\bibitem{FT}
Fraser, J. M.; Troscheit, S.: 
The Assouad spectrum of random self-affine carpets. 
Ergodic Theory and Dynamical Systems 41 (2021), no. 10, 2927--2945.

\bibitem{GM}Gu, Yifei; Miao, Jun Jie: Dimensions of a class of self-affine Moran sets. Journal of Mathematical Analysis and Applications 513.1 (2022): 126210.

\bibitem{Hata}
Hata, Masayoshi: 
On the structure of self-similar sets. 
Japan J. Appl.\ Math.\ 2 (1985), no. 2, 381--414.

\bibitem{Hatcher}
Hatcher, Allen: 
Algebraic topology. 
Cambridge University Press, Cambridge, 2002, xii+544 pp.

\bibitem{HZ}Holland, Mark; Zhang, Yiwei: Dimension results for inhomogeneous Moran set constructions. Dynamical Systems 28.2 (2013): 222-250.

\bibitem{HY}Hocking, John G.; Young, Gail S.:
Topology.
Addison-Wesley Publishing Co., Inc., Reading, Mass.-London, 1961. ix+374 pp.

\bibitem{Ishiki}
Ishiki, Yoshito: 
Fractal dimensions in the Gromov-Hausdorff space. 
Bull.\ Pol.\ Acad.\ Sci.\ Math. 71 (2023), no. 2, 147--168.

\bibitem{KR}Käenmäki, Antti; Rutar, Alex: 
Regularity of non-autonomous self-similar sets.
Math.\ Proc.\ Cambridge Philos.\ Soc. 179 (2025), no. 3, 623--648.

\bibitem{Ki}
Kigami, Jun: 
Analysis on fractals. 
Cambridge Tracts in Math., 143, 
Cambridge University Press, Cambridge, 2001, viii+226 pp.

\bibitem{LL}
Luo, Jun Jason; Liu, Jing-Cheng:
On the classification of fractal squares. 
Fractals 24 (2016), no. 1, 1650008, 11 pp.

\bibitem{LR}
Luo, Jun; Rao, Hui:
A Survey on the Topology of Fractal Squares. 
arXiv:2505.00309, 2025. 


\bibitem{LX}
Luo, Jun; Xiong, Dong Hong: 
A criterion for self-similar sets to be totally disconnected.
Ann.\ Fenn.\ Math.\ 46 (2021), no. 2, 1155--1159.

\bibitem{Mandel}
Mandelbrot, Benoit B.: 
The fractal geometry of nature. 
W. H. Freeman and Co., San Francisco, CA, 1982, v+460 pp.

\bibitem{Nak}Nakajima, Yuto: 
Transversal family of non-autonomous conformal iterated function systems. 
J.\ Fractal Geom. 11 (2024), no. 1-2, 57--84.

\bibitem{RU16}
Rempe-Gillen, Lasse; Urbański, Mariusz: 
Non-autonomous conformal iterated function systems and Moran-set constructions.
Trans.\ Amer.\ Math.\ Soc.\ 368 (2016), no. 3, 1979--2017.

\bibitem{Ro}
Roinestad, Kristine A.:
Geometry of fractal squares. 
Diss. Ph. D. Thesis, The Virginia Polytechnic Institute and State University, 2010.

\bibitem{RW}
Ruan, Huo-Jun; Wang, Yang:
Topological invariants and Lipschitz equivalence of fractal squares. 
J. Math. Anal. Appl. 451 (2017), no. 1, 327--344.

\bibitem{Sad}
Sadun, Lorenzo: 
Topology of tiling spaces.
Univ. Lecture Ser., 46, 
American Mathematical Society, Providence, RI, 2008. x+118 pp.

\bibitem{Spa}
Spanier, Edwin H.: 
Algebraic topology. 
McGraw-Hill Book Co., New York-Toronto-London, 1966. xiv+528 pp.

\bibitem{Sumi09}
Sumi, Hiroki: 
Interaction cohomology of forward or backward self-similar systems. 
Adv.\ Math.\ 222 (2009), no. 3, 729--781.

\bibitem{W70}
Wallace, Andrew H.: 
Algebraic topology: Homology and cohomology.
W. A. Benjamin, Inc., New York, 1970. ix+272 pp.

\bibitem{W24}
Watanabe, Takayuki:
On the stochastic bifurcations regarding random iterations of polynomials of the form  $z^2+c_n$.  
Ergodic Theory Dynam. Systems 44 (2024), no. 11, 3358--3384.

\bibitem{Xiao}
Xiao, Jian-Ci:
Fractal squares with finitely many connected components. 
Nonlinearity 34 (2021), no. 4, 1817--1836.

\bibitem{ZL}
Zhang, Yanfang; Liu, Xinhui:
Strict Hölder equivalence of self-similar sets. 
arXiv:2504.04088
\end{thebibliography}
\end{document}